\def\@settitle{\begin{center}%
  \baselineskip14\p@\relax
    \normalfont\LARGE

  \@title
  \end{center}%
}
\newcolumntype{P}[1]{>{\centering\arraybackslash}p{#1}}
\newcolumntype{M}[1]{>{\centering\arraybackslash}m{#1}}
\DeclareMathAlphabet\EuScript{U}{eus}{m}{n}
\SetMathAlphabet\EuScript{bold}{U}{eus}{b}{n}
\let\oldmarginpar\marginpar
\renewcommand\marginpar[1]{\-\oldmarginpar[\raggedleft\footnotesize #1]%
	{\raggedright\footnotesize #1}}
\theoremstyle{plain}
\newtheorem{thm}{Theorem}[section]
\newtheorem{lemma}[thm]{Lemma}
\newtheorem*{theorem*}{Theorem}
\newtheorem*{corollary*}{Corollary}
\newtheorem{example}[thm]{Example}
\newtheorem{prop}[thm]{Proposition}
\newtheorem{cor}[thm]{Corollary}
\theoremstyle{definition}
\newtheorem{definition}[thm]{Definition}
\newtheorem{remark}[thm]{Remark}
\numberwithin{equation}{section}
\newcommand{\RHom}{\text{RHom}}
\newcommand{\g}{\gamma}
\renewcommand{\P}{\mathbb{P}}
\renewcommand{\L}{\mathbb{L}}
\newcommand{\D}{\mathscr{D}}
\newcommand{\N}{\mathbb{N}}
\newcommand{\Z}{\mathbb{Z}}
\newcommand{\R}{\mathbb{R}}
\newcommand{\C}{\mathbb{C}}
\newcommand{\SO}{\mathcal{O}}
\newcommand{\SC}{\mathcal{C}}
\newcommand{\SF}{\mathscr{F}}
\newcommand{\SL}{\mathscr{L}}
\newcommand{\La}{\Lambda}
\newcommand{\la}{\lambda}
\newcommand{\dd}{\partial}
\newcommand{\op}{\operatorname{Op}}
\newcommand{\sse}{\subset}
\newcommand{\lr}{\longrightarrow}
\newcommand{\msh}{\operatorname{\operatorname{\mu Sh}}}
\newcommand{\tr}{\operatorname{tr}}
\newcommand{\HH}{\operatorname{HH}}
\newcommand{\HO}{\operatorname{HO}}
\newcommand{\Aut}{\operatorname{Aut}}
\newcommand{\GL}{\operatorname{GL}}
\newcommand{\BGL}{\operatorname{BGL}}
\newcommand{\Sh}{\operatorname{Sh}}
\newcommand{\modk}{\operatorname{\mbox{Mod}}(k)}
\newcommand{\Aug}{\operatorname{Aug}}
\newcommand{\Spec}{\operatorname{Spec}}
\newcommand{\Perf}{\operatorname{Perf}}
\newcommand{\Mod}{\operatorname{Mod}}
\newcommand{\Qcoh}{\operatorname{QCoh}}
\newcommand{\IndCoh}{\operatorname{IndCoh}}
\newcommand{\st}{\text{st}}
\def\Op{{\mathcal O}{\it p}}
\newcounter{daggerfootnote}
\newcommand{\bC}{\mathbb{C}}
\newcommand{\bD}{\mathbb{D}}
\newcommand{\bK}{\mathbb{K}}
\newcommand{\bL}{\mathbb{L}}
\newcommand{\bN}{\mathbb{N}}
\newcommand{\bS}{\mathbb{S}}
\newcommand{\bR}{\mathbb{R}}
\newcommand{\bZ}{\mathbb{Z}}
\newcommand{\cA}{\mathcal{A}}
\newcommand{\cC}{\mathcal{C}}
\newcommand{\cD}{\mathcal{D}}
\newcommand{\cE}{\mathcal{E}}
\newcommand{\cM}{\mathcal{M}}
\newcommand{\cO}{\mathcal{O}}
\newcommand{\LC}{{\EuScript C}}
\renewcommand{\tt}{\mathfrak{t}}
\newcommand{\ww}{\mathfrak{w}}
\newcommand{\M}{\mathfrak{M}}
\newcommand{\Loc}{\mathrm{Loc}}
\newcommand{\Hom}{\mathrm{Hom}}
\newcommand{\End}{\mathrm{End}}
\newcommand{\Sym}{\mathrm{Sym}}
\newcommand{\Tr}{\mathrm{Tr}}
\newcommand{\Id}{\mathrm{Id}}
\newcommand{\id}{\mathrm{id}}
\def \vertbar [#1](#2,#3,#4){
    \draw [#1] (#2,#3) -- (#2,#4);
    \draw [fill=white] (#2,#3) circle [radius=0.1];
    \draw [fill=black] (#2,#4) circle [radius=0.1];
}
\providecommand{\leftsquigarrow}{%
  \mathrel{\mathpalette\reflect@squig\relax}%
}
\newcommand{\reflect@squig}[2]{%
  \reflectbox{$\m@th#1\rightsquigarrow$}%
}
\def\Ddots{\mathinner{\mkern1mu\raise\p@
\vbox{\kern7\p@\hbox{.}}\mkern2mu
\raise4\p@\hbox{.}\mkern2mu\raise7\p@\hbox{.}\mkern1mu}}
\def \horline [#1](#2,#3,#4){
    \draw [#1] (#2,#4) -- (#3,#4);
    \draw [fill=white] (#2,#4) circle [radius=0.1];
    \draw [fill=black] (#3,#4) circle [radius=0.1];
}
\def \crossing (#1,#2)(#3,#4){
\draw (#1,#2) -- (#3,#4);
\draw (#1,#4) -- (#3,#2);
}
\DeclareFontFamily{U}{mathb}{}
\DeclareFontShape{U}{mathb}{m}{n}{
  <-5.5> mathb5
  <5.5-6.5> mathb6
  <6.5-7.5> mathb7
  <7.5-8.5> mathb8
  <8.5-9.5> mathb9
  <9.5-11.5> mathb10
  <11.5-> mathbb12
}{}
\tikzset{tangent/.style={decoration={markings,mark=at position #1 with {
      \coordinate (tangent point-\pgfkeysvalueof{/pgf/decoration/mark info/sequence number}) at (0pt,0pt);
      \coordinate (tangent unit vector-\pgfkeysvalueof{/pgf/decoration/mark info/sequence number}) at (1,0pt);
      \coordinate (tangent orthogonal unit vector-\pgfkeysvalueof{/pgf/decoration/mark info/sequence number}) at (0pt,1);
      }},postaction=decorate},
    use tangent/.style={
        shift=(tangent point-#1),
        x=(tangent unit vector-#1),
        y=(tangent orthogonal unit vector-#1)
    },
    use tangent/.default=1
    }
\begin{document}

	\title{Positive microlocal holonomies are globally regular}

\author{Roger Casals}
	\address{University of California Davis, Dept. of Mathematics, USA}
	\email{casals@ucdavis.edu}

 \author{Wenyuan Li}
	\address{University of Southern California, Dept. of Mathematics, USA}
	\email{wenyuan.li@usc.edu}
 
	\subjclass[2010]{Primary: 53D10. Secondary: 57K43, 13F60.}
		
\maketitle
\vspace{-1.2cm}
\begin{abstract} We establish a geometric criterion for local microlocal holonomies to be globally regular on the moduli space of Lagrangian fillings. This local-to-global regularity result holds for arbitrary Legendrian links and it is a key input for the study of cluster structures on such moduli spaces. Specifically, we construct regular functions on derived moduli stacks of sheaves with Legendrian microsupport by studying the Hochschild homology of the associated dg-categories via relative Lagrangian skeleta. In this construction, a key geometric result is that local microlocal merodromies along positive relative cycles in Lagrangian fillings yield global Hochschild 0-cycles for these dg-categories.
\end{abstract}



\section{Introduction}\label{sec:intro}


Symplectic topology is the study of smooth functions and their first derivatives, broadly understood. Infinitesimally, symplectic geometry studies the differential structure associated to the Lie algebra $\mathfrak {sp}_{2n}$, characterized as the geometry that preserves a non-degenerate skew-symmetric bilinear form. Along with the geometry that preserves volume, corresponding to ${\displaystyle {\mathfrak {sl}}_{n}}$, and (symmetric) inner products, corresponding to ${\displaystyle {\mathfrak {so}}_{n} }$, these constitute the three infinite families in the classification of complex simple Lie algebras. Locally, symplectic geometry adds the integrability condition that the non-degenerate 2-form is closed, resulting in a unique normal form near any point, up to symmetries preserving the symplectic 2-form. Globally, the intricacies of symplectic 2-forms and their symmetries remain an active subject of study, often referred to as symplectic topology. A salient instance of symplectic manifolds are cotangent bundles $T^*Q$ of smooth manifolds $Q$, whereby the smooth topology of $Q$ and its submanifolds is in part reflected by the intersection theory of Lagrangian submanifolds in $T^*Q$. Here, Lagrangian submanifolds are defined to be those submanifolds where the restriction of the ambient symplectic 2-form vanishes identically, and are of maximal dimension. In general, such Lagrangian submanifolds generalize the graphs $\mbox{gr}(df)\sse T^*Q$ of differentials of smooth functions $f:Q\lr\R$, in particular allowing for non-graphical behavior and parametric families of such functions.

In modern symplectic topology, Lagrangians and their intersection theory yield useful global invariants of symplectic manifolds. In addition, any closed symplectic manifold admits a symplectic divisor whose complement can be appropriately understood as the cotangent bundle of a possibly singular Lagrangian skeleton. The study of Lagrangian submanifolds is thus of central importance in our understanding of symplectic topology, e.g.~ the classification of Lagrangian submanifolds, up to either Hamiltonian isotopies or symplectomorphisms, is a desired goal in the current scientific context. This article examines one of the simplest models: understanding embedded exact Lagrangian surfaces of the local normal form $(\R^4,\omega_\st)$, with a fixed boundary condition given by a Legendrian knot. Intuitively, the main goal of the article is to find {\it global regular} functions on the derived moduli stack of such Lagrangian surfaces. Such functions allow for a better understanding of these intricate spaces (e.g.~sometimes allowing for the computation of their cohomology rings) and yield connections to other areas, including the study of cluster algebras. The main strategy for constructing such global functions is to first construct local regular functions, by using certain parallel transports, and then establish a criterion for these locally regular functions to be globally regular. The title of the manuscript refers to the criterion we present: if such a local function is {\it positive} with respect to a Lagrangian skeleton, then it extends to a global regular function.



\subsection{Summary} The object of this article will be to prove the global regularity of positive microlocal merodromies for Lagrangian fillings of Legendrian links. Microlocal merodromies are local functions defined on an open subset of the moduli of pseudo-perfect objects in the smooth dg-category of sheaves with singular support on a Legendrian link. Intuitively, these are functions defined on an open subset of the moduli of Lagrangian fillings. In important situations, such {\it local} functions can be extended to {\it global} regular functions on the entire moduli space. For instance, this is the case for cluster variables in braid varieties and, more generally, elements of canonical bases. This {\it local} to {\it global} regularity lies at the core of many recent results on Lagrangian fillings. Heretofore, it has been difficult to argue that such regular extensions exist, e.g.~it is already challenging in the case of cluster variables. This article provides a general criterion for such global regularity based on the study of $\L$-compressing systems. The conditions for this criterion are verifiable and based on geometric data.

Specifically, our main result is to show that such regular extensions exist if the relative cycle intersects positively an $\L$-compressing system. It applies to arbitrary Legendrian links and Lagrangian fillings (and arbitrary microlocal rank), both generalizing and independently recovering our previous results on global regularity of cluster variables. A key part of our argument is the construction of Hochschild cycles for the aforementioned category that lift these microlocal merodromies. The proof highlights the advantages of working within the dg-categorical framework when studying Legendrian links. Even in known cases, our argument bypasses previously required algebraic computations needed to argue for regularity and, using the derived stack of pseudo-perfect objects, provides a clearer categorical argument for regularity. In brief, apart from its greater level of generality, a strength of the result is that it provides new conceptual and geometric reasons for the global regularity of such functions.

\begin{center}
	\begin{figure}[H]
		\centering
		\includegraphics[scale=0.6]{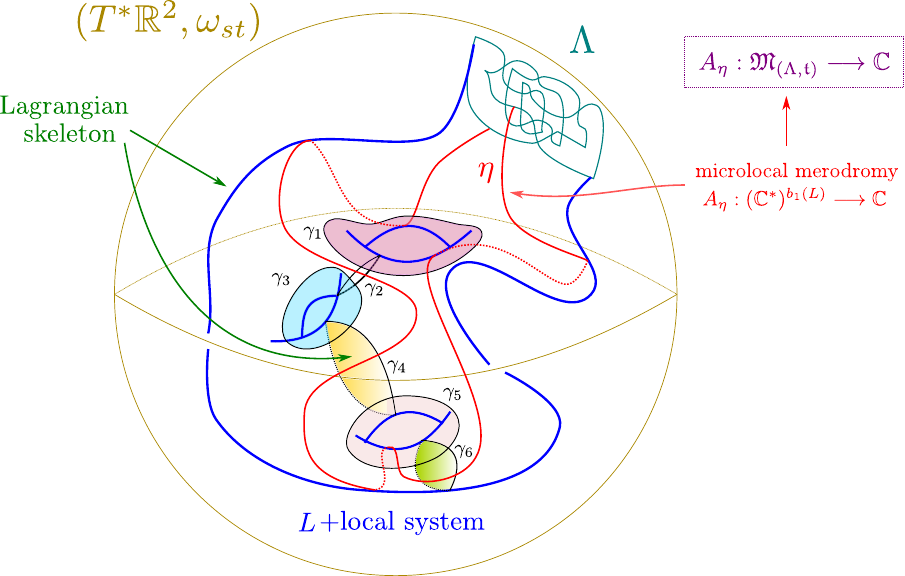}
		\caption{A Lagrangian filling $L$, in \textcolor{blue}{blue}, of $\La$, in \textcolor{cyan}{cyan}, with an $\L$-compressing system $\D=\{\g_1,\ldots,\g_6\}$. Relative cycle $\eta$ in \textcolor{red}{red}. In Theorem \ref{thm:main} we lift the microlocal merodromy from a {\it local} regular function to a Hochschild 0-chain of $\Sh^c_{\La,\tt}(\R^2)$. The latter allows us to construct a {\it global} regular function $A_\eta$ in $\Gamma(\M{(\La,\tt)},\SO_{\M{(\La,\tt)}})$.}
		\label{fig:Filling_Merodromy1}
	\end{figure}
\end{center}


\subsection{Scientific context} Let $\La\sse(T^*_\infty\R^2,\xi_\st)$ be a Legendrian link in the ideal contact boundary of the cotangent bundle of $\R^2$, e.g.~any Legendrian link in a contact Darboux chart. The study of Lagrangian fillings of $\La$ is a pillar of low-dimensional contact topology, see e.g.~\cite{BourgeoisSabloffTraynor15,EkholmEtnyreNgSullivan13,EHK,EliashbergPolterovich96,EtnyreNg22_Survey,GKS_Quantization,Leverson16_AugRulings,NRSSZ,Polterovich_Surgery,STZ_ConstrSheaves}. Recently, the results from \cite{CasalsHonghao,CasalsGao24} and the trilogy \cite{CGGLSS,CW,CasalsZaslow} provided a better understanding of the moduli space $\M(\La)$ of Lagrangian fillings and its geometric structures. Note that, in addition to new results in contact topology, studying these moduli spaces $\M(\La)$ from this contact topological perspective has lead to a number of applications to cluster algebras, see e.g.~\cite{CGGS,CGGLSS,CLSW23,CW}. Now, a key technique to understand such moduli spaces $\M(\La)$ is the construction of regular functions on them. The main result of this article is a general construction of such regular functions coming from a microlocal version of parallel transport.

In a nutshell, from \cite[Section 4]{CW}, certain relative 1-cycles $\eta\in H_1(L,\La)$ in a Lagrangian filling $L$ (related to $\L$-compressing systems) allow us to define regular functions $A_\eta:T(L)\lr \C$ on an open subset $T(L)\sse\M(\La)$. If $\La$ is particular enough and certain combinatorics are present, e.g.~weaves in the case of \cite{CGGLSS}, grid plabic graphs in the case of \cite{CW} and 3D plabic graphs in the case of \cite{GLSBS}, one can sometimes argue that $A_\eta$ extends to a regular function on the entirety of $\M(\La)$. This has significant consequences, including the construction of cluster structures \cite{CGGLSS,CW}, holomorphic symplectic structures \cite{CGGS}, the existence of Lagrangian fillings in each cluster seed \cite{CasalsGao24} and geometric realizations of Donaldson-Thomas transformations \cite{CLSW23,CW}.

That said, the arguments employed in the above works are tied to the underlying combinatorics and do not generalize to Legendrian links that are not closures of positive braids. From the perspective of contact topology, it is desirable to find a general and more conceptual proof, based on symplectic geometry, that both applies to arbitrary Legendrian links and sheds geometric intuition on when and why global regularity holds. This article contains a first such proof.


\subsection{Main result} 

Let $\La\sse (T^*_\infty\R^2,\xi_{st})$ be a Legendrian link and $\tt\sse\La$ a set of basepoints, with at least one basepoint per component of $\La$. We say $\La$ is $\tt$-pointed if such a choice $\tt$ of basepoints has been made. In Section \ref{sec:sheaf_cat_moduli} we introduce a refinement $\Sh^c_{\La,\tt}(\R^2)_0$ of the dg-category $\Sh_{\La}(\R^2)_0$ of compactly supported sheaves in $\R^2$ with singular support on $\La$. The objects of $\Sh^c_{\La,\tt}(\R^2)_0$ are categorically compact sheaves in $\Sh_{\La}(\R^2)_0$ together with trivializations of their microstalks at the basepoints $\tt$. By construction, $\Sh^c_{\La,\tt}(\R^2)_0$ is a smooth dg-category and we denote by $\M({\La,\tt})$ its moduli of pseudo-perfect objects, cf.~\cite[Section 3.1]{ToenVaquie07}. The derived stack $\M({\La,\tt})$ is the model we use for the moduli space of Lagrangian fillings of $(\La,\tt)$: intuitively, its classical closed points parametrize Lagrangian fillings of $\La$ endowed with local systems, trivialized at some boundary points. We will use the data of the microstalk trivializations in $\Sh^c_{\La,\tt}(\R^2)_0$ to obtain regular functions on $\M({\La,\tt})$, by using the parallel transport of the local systems along relative cycles. For precise details, see e.g.~\cite{CasalsLi,CW} and \cite{ToenVaquie07}. Now, the construction of \cite{ToenVaquie07} is such that there exists a map
\begin{equation}\label{eq:HO_map}
\HO:\HH_*(\SC)\lr \Gamma(\M_\SC,\SO_{\M_\SC})
\end{equation}
for any smooth dg-category $\SC$ and $\M_\SC$ its derived moduli stack, where $\HH_*(\SC)$ denotes the Hochschild chains of $\SC$, cf.~\cite[Sections 4~\&~5]{BravDycker21}. More generally, Hochschild chains in $\SC$ map to differential forms on $\M_\SC$, cf.~\cite[Prop. 5.2]{BravDycker21}. A key construction in this note is to enhance microlocal merodromies to the categorical level, obtaining chains in the domain of the map $\HO$ in (\ref{eq:HO_map}) for $\SC:=\Sh^c_{\La,\tt}(\R^2)$.

Let $T\sse \La\setminus\tt$ be another set of basepoints such that there exists exactly one basepoint of $\tt$ in each interval of $\La\setminus T$. We say $\La$ is $(\tt,T)$-pointed if such choices of basepoints $\tt$ and $T$ have been made. Consider an embedded exact Lagrangian filling $L\sse (T^*\R^2,\la_\st)$ of $\La$ endowed with an $\L$-compressing system $\D=\{\g_1,\ldots,\g_{b_1(L)}\}$, as defined in \cite[Section 4]{CasalsGao24}. Succinctly, a (complete) $\L$-compressing system is a set of disjoint Lagrangian disks $\{D_1,\ldots,D_{b_1(L)}\}$, each properly embedded in the complement $T^*\R^2\setminus L$ of $L$ with immersed boundaries $\g_i:=\dd \overline{D_i}\sse L$ and such that the union $\bL:=L\cup (D_1\cup\ldots D_{b_1(L)})\sse T^*\R^2$ is a relative Lagrangian skeleton for $(T^*\R^2,\La)$. See also \cite[Section 1]{CW}.\footnote{The assumption that $\bL:=L\cup (D_1\cup\ldots D_{b_1(L)})\sse T^*\R^2$ is a relative Lagrangian skeleton for $(T^*\R^2,\La)$ is not explicitly stated in \cite[Section 1]{CW}, but all known examples of $\L$-compressing systems, in the sense of \cite{CW}, satisfy this assumption. See for example \cite{CasalsLagSkel} and \cite[Proposition 5.2]{BreenRoyWang} for a proof.} By definition, a relative cycle $\eta:[0,1]\lr(L,\tt)$ is said to be $\D$-positive if $\langle\eta,\gamma_i\rangle\geq0$ for all $i\in[1,b_1(L)]$.

\noindent Let ${T}(L)\sse \M({\La,\tt})$ be the open set ${T}(L)\cong \Loc(L,T)$ given by the embedded exact Lagrangian filling $L$, see e.g.~\cite[Appendix B]{CasalsLi} or \cite[Section 2]{CW} and references therein. By construction, the microlocal monodromy $A_\eta\in \Gamma({T}(L),\SO_{{T}(L)})$ of $\eta$ is a regular function on the open set ${T}(L)$. It might or it might not extend to a regular function on the entire $\M({\La,\tt})$. There are instances where such local merodromies do {\it not} extend globally, cf.~\cite[Section 11]{CGGLSS}. Our main result allows us to establish a criterion for such a global extension to exist. It reads as follows:

\begin{thm}[``Main extension result'']\label{thm:main}
Let $\La\sse(T^*_\infty\R^2,\xi_\st)$ be a $(\tt,T)$-pointed Legendrian link and $L\sse (T^*\R^2,\la_\st)$ an embedded exact Lagrangian filling of $\La$ endowed with an $\L$-compressing system $\D$.

\noindent Then, for any $\D$-positive relative 1-cycle $\eta$ in the relative pair $(L,T)$, there exists a Hochschild 0-cycle $H_\eta\in \HH_0(\Sh^c_{\La,\tt}(\R^2))$ whose associated regular function
$\HO(H_\eta)\in \mathrm{H}^0\Gamma(\M({\La,\tt}),\SO_{\M({\La,\tt})})$
coincides with the trace of the microlocal merodromy along $\eta$ when restricted to ${T}(L)\sse \M({\La,\tt})$.
\end{thm}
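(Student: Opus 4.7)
The plan is to construct $H_\eta$ via a combinatorial model of $\Sh^c_{\La,\tt}(\R^2)$ provided by the relative Lagrangian skeleton $\bL = L \cup D_1 \cup \cdots \cup D_{b_1(L)}$. The key observation is that $\D$-positivity of $\eta$ is exactly what ensures that its natural lift to such a model uses only positive (degree-zero) generators, which in turn forces closedness in the Hochschild complex.

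First I would use that $\bL$ is a relative Lagrangian skeleton for $(T^*\R^2, \La)$ to obtain an algebraic model: a Ginzburg-style dg-algebra $\SA_{\bL}$ whose degree-$0$ generators record the $\tt$-basepoints and paths in $L$, and whose degree-$1$ generators $\delta_i$ are dual to the compressing disks $D_i$ and satisfy $d\delta_i = \g_i$. A generation statement for the subcategory of $\Sh^c_{\La,\tt}(\R^2)$ cut out by $L$ (provided by microsheaves on $\bL$) would yield a functor from $\SA_{\bL}$-modules into $\Sh^c_{\La,\tt}(\R^2)$ functorial enough to induce a chain map $\HH_\ast(\SA_{\bL}) \to \HH_\ast(\Sh^c_{\La,\tt}(\R^2))$ in which the desired cycle will live.

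Given such a model, I would decompose a $\D$-positive representative of $\eta$ into positively traversed segments $f_1, \ldots, f_n$ in $L$ running between consecutive $\tt$-basepoints, and form the cyclic tensor $H_\eta = f_1 \otimes \cdots \otimes f_n$ in the cyclic bar complex of $\SA_{\bL}$. Its closedness $dH_\eta = 0$ splits into two contributions: the internal differential of $\SA_{\bL}$ vanishes on $H_\eta$ because no $\delta_i$ appears, and the bar differential terms correspond to associative concatenations of consecutive positive paths and hence collapse cyclically. Positivity of $\eta$ is precisely what rules out the $\delta_i$-correction terms that would otherwise obstruct closedness. To identify $\HO(H_\eta)$ with $A_\eta$ on $T(L)$, I would use that the open embedding $T(L) \cong \Loc(L, T) \hookrightarrow \M(\La,\tt)$ corresponds to pseudo-perfect $\SA_{\bL}$-modules in which each $\delta_i$ acts by $0$ and each degree-$0$ path acts by the parallel transport of the corresponding local system $\rho$; compatibility of $\HO$ with this restriction then yields $\HO(H_\eta)(\rho) = \tr\bigl(\rho(f_1)\cdots\rho(f_n)\bigr) = A_\eta(\rho)$.

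The main obstacle is the modeling step: producing the Ginzburg-style dg-algebra $\SA_{\bL}$ together with a chain-level functor into $\Sh^c_{\La,\tt}(\R^2)$ that is rich enough to both support the construction of $H_\eta$ and be compatible with restriction to $T(L)$. This requires microsheaf-theoretic (or wrapped Fukaya) inputs for relative Lagrangian skeleta with attached compressing disks, carefully adapted to the $(\tt,T)$-pointed refinement of the sheaf category. Once this is in place, the construction of $H_\eta$, its closedness, and the identification with $A_\eta$ all reduce to the combinatorics of positive paths in $L$ and a direct trace computation.
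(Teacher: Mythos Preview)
Your approach is in the right spirit but differs from the paper's in a key structural way. The paper does \emph{not} build a global Ginzburg-style dg-algebra model for $\Sh^c_{\La,\tt}(\R^2)$. Instead it works \emph{locally}: because $\eta$ is $\D$-positive, a neighbourhood of $\eta$ inside the skeleton $\bL=L\cup\D^*$ retracts to the one-dimensional skeleton $\bK_n$ (interval with $n$ positive spikes). On $\bK_n$ the category is the representation category of the cyclic $n$-quiver, whose $\HH_0$ is computed explicitly as $k[\rho]\oplus k^n$ with $\rho=a_1a_2\cdots a_n$. The class $h_\eta:=\rho$ is then pushed to $\HH_0(\Sh^c_{\La,\tt}(\R^2))$ via the \emph{corestriction} functor $\rho_!:\msh^c_{\bK,\tt}(\bK,\tt)\to\msh^c_{\bL,\tt}(\bL,\tt)\cong\Sh^c_{\La,\tt}(\R^2)$ (left adjoint to restriction). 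The identification with the merodromy on $T(L)$ is then a compatibility of corestriction with the localization $\msh^c_{\bL,\tt}\to\Loc^c(L,\tt)$, reduced to the explicit trace formula for $\HO$ on $\Perf(R)$. So the paper converts your ``main obstacle'' (the global model) into a soft categorical input (existence and functoriality of corestriction), at the cost of proving that input.

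There is also a genuine slip in your construction. You place $H_\eta=f_1\otimes\cdots\otimes f_n$ in the bar complex and argue that the bar differential ``collapses cyclically''. But this tensor sits in Hochschild degree $n-1$, not $0$, and the bar differential does not vanish on it in general. What you actually want (and what the paper uses in its local model) is the \emph{product} $f_1f_2\cdots f_n$, which is a degree-$0$ element of the path algebra and hence a Hochschild $0$-chain; its closedness is then automatic since the internal differential vanishes on degree $0$ (no $\delta_i$ appears). With that correction your scheme is coherent, but you are still left needing the global algebra $\SA_{\bL}$ together with a chain-level comparison to $\Sh^c_{\La,\tt}(\R^2)$ --- exactly the step the paper's local-plus-corestriction strategy is designed to avoid.
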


\noindent Theorem \ref{thm:main} is proven in Section \ref{ssec:main_proof}. An immediate consequence of Theorem \ref{thm:main} is that, in particular, microlocal merodromies along positive relative cycles define {\it global} regular functions. In brief, it shows that {\it local positive} merodromies are always {\it globally} regular. Specifically, following the notation in \cite[Section 4]{CW}, Theorem \ref{thm:main} implies:

\begin{cor}[``Local merodromies along positive cycles are global'']\label{cor:main}
Let $\La\sse(T^*_\infty\R^2,\xi_\st)$ be a $(\tt,T)$-pointed Legendrian link and $L\sse (T^*\R^2,\la_\st)$ an embedded exact Lagrangian filling of $\La$ endowed with an $\L$-compressing system $\D$.

\noindent Then, for any $\D$-positive relative 1-cycle $\eta$ in the relative pair $(L,T)$, the trace of its microlocal merodromy $A_\eta\in \mathrm{H}^0\Gamma({T}(L),\SO_{{T}(L)})$ extends to a global regular function
$A_\eta\in \mathrm{H}^0\Gamma(\M({\La,\tt}),\SO_{\M({\La,\tt})})$.
\end{cor}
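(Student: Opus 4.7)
The plan is to deduce Corollary \ref{cor:main} as an essentially formal consequence of Theorem \ref{thm:main}; all of the geometric and categorical content lies in the existence of the Hochschild lift $H_\eta$, and the corollary itself is an unpacking of the map $\HO$ from (\ref{eq:HO_map}).

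First, I would apply Theorem \ref{thm:main} to the $\D$-positive relative $1$-cycle $\eta$ in $(L,T)$ to obtain a Hochschild 0-cycle $H_\eta \in \HH_0(\Sh^c_{\La,\tt}(\R^2))$. The dg-category $\SC := \Sh^c_{\La,\tt}(\R^2)$ is smooth by the construction in Section \ref{sec:sheaf_cat_moduli}, and its derived moduli stack of pseudo-perfect objects $\M(\La,\tt)$ fits the framework of \cite{ToenVaquie07}. Consequently, the map
\[
\HO : \HH_*(\SC) \lr \Gamma(\M(\La,\tt),\SO_{\M(\La,\tt)})
\]
from (\ref{eq:HO_map}) is defined in this setting, and sending $[H_\eta]$ through it produces a global regular function
\[
\HO(H_\eta) \in \mathrm{H}^0\Gamma(\M(\La,\tt),\SO_{\M(\La,\tt)}).
\]
I would then declare this to be the extension, i.e.\ set $A_\eta := \HO(H_\eta)$ on all of $\M(\La,\tt)$.

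Second, I would verify compatibility with the local merodromy on the open chart $T(L) \cong \Loc(L,T) \sse \M(\La,\tt)$. This step is not a separate computation: Theorem \ref{thm:main} asserts directly that the restriction $\HO(H_\eta)\vert_{T(L)}$ agrees with the trace of the microlocal merodromy along $\eta$. Together with the first step, this gives both the global regularity of $A_\eta$ and its coincidence with the local function of \cite[Section 4]{CW} on $T(L)$, which is precisely the content of the corollary.

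The main (in fact, only) nontrivial obstacle is the existence statement of Theorem \ref{thm:main}: namely, lifting a function $A_\eta$ that \emph{a priori} is only defined on the open chart $T(L)$ to an actual Hochschild 0-cycle of $\SC$, where $\D$-positivity of $\eta$ is what makes such a lift possible via the relative Lagrangian skeleton $\bL = L \cup D_1 \cup \cdots \cup D_{b_1(L)}$. Given that theorem, the corollary requires no further combinatorial structure on $\La$, which is why it applies to arbitrary $(\tt,T)$-pointed Legendrian links, in contrast with the braid-closure or plabic-graph arguments of \cite{CGGLSS,CW,GLSBS}.
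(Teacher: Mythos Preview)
Your proposal is correct and matches the paper's approach: the paper states that Corollary \ref{cor:main} is an ``immediate consequence'' of Theorem \ref{thm:main} and gives no further argument, so your unpacking of $\HO(H_\eta)$ as the desired global extension, together with the restriction statement already contained in Theorem \ref{thm:main}, is exactly what is intended.
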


\noindent Corollary \ref{cor:main} implies the core of the results in \cite{CW} and \cite{CGGLSS}, without essentially performing any computations or relying on combinatorics. Indeed, the cluster variables defined in  \cite[Section 4]{CW} and \cite[Section 5]{CGGLSS} are particular instances of microlocal merodromies along positive relative cycles, as the relative cycles $\eta$ are the Poincar\'e duals of the basis given by Lusztig cycles. In addition, \cite{CasalsGao24} implies that for these cluster schemes, all elements of the theta canonical basis from \cite{GHKK} come from such microlocal merodromies. Corollary \ref{cor:main} readily establishes their {\it global} regularity via a verifiable {\it local} condition. Note that such extensions (to the corresponding irreducible component) are unique because $T(L)$ is an open subset of $\M({\La,\tt})$. We refer to holonomies along positive cycles as {\it positive} holonomies: the title of this article thus records the statement of Corollary \ref{cor:main}.

\begin{remark}\label{rmk:computations}
The microlocal merodromies appearing in \cite[Section 4]{CW} and \cite[Section 5]{CGGLSS} are a priori only rational functions on the coordinate rings in the ambient space. Key parts of those articles consist of algebraically manipulating such rational expressions, guided by the underlying combinatorics, 
to prove they are in fact regular functions when restricted to braid varieties.\hfill$\Box$
\end{remark}

Corollary \ref{cor:main} has the important advantage of avoiding the computations mentioned in Remark \ref{rmk:computations}. In particular, Theorem \ref{thm:main} and Corollary \ref{cor:main} apply to arbitrary microlocal ranks and arbitrary Legendrian links in contact boundaries, not just to the moduli space of microlocal rank~1 sheaves on positive braid closures used in \cite{CGGLSS,CW}, cf.~\cref{ssec:generalization}.\\


\subsection{Outline of the argument}\label{ssec:ingredients} The proof of \cref{thm:main} is rather conceptual and it can be helpful to emphasize the three key ingredients that feature in the proof. We thank the referee for suggesting that we add the following list in this introduction. The main ingredients can be summarized as follows:

\begin{enumerate}
    \item The moduli of pseudoperfect objects $\M_\SC$ of a smooth dg-category $\SC$, as introduced and studied by B.~To\"en and M.~Vaqui\'e in \cite{ToenVaquie07}. By \cite[Corollary 4.2]{ToenVezzosiHKR}, the dg-category $\SC$ and its derived stack $\M_\SC$ are equipped with a map $\HO:\HH_*(\SC)\lr \Gamma(\M_\SC,\SO_{\M_\SC})$ from the Hochschild chains of $\SC$ to the derived global functions on $\M_\SC$. Therefore, to construct functions on $\M_\SC$ it suffices to construct Hochschild chains on $\SC$. In the proof of \cref{thm:main}, this general framework is applied to the dg-category $\SC:=\Sh^c_{\La,\tt}(\R^2)$ that we introduce in \cref{ssec:dgcat_of_sheaves}, and the necessary details on $\M_\SC$ and the $\HO$ map are provided in \cref{ssec:derived_stacks} and \cref{ssec:general_framework}, respectively.\\

    \item The local model of the Lagrangian skeleton near a positive path, as depicted in \cref{fig:Filling_Merodromy3}. Given a generic relative chain in the union of a Lagrangian filling and an $\L$-compressing system, such as the relative chain $\eta$ drawn in red in \cref {fig:Filling_Merodromy1}, its neighborhood retracts to a horizontal line (or circle) with a sequence of vertical half-rays attached to it, some pointing upwards and some downwards. For a positive path, these half-rays all point upwards, as in \cref{fig:Filling_Merodromy3}. This local model is studied in detail in \cref{ssec:HH0_near_rel_cycle}, where Hochschild chains for its associated dg-category are computed, and in \cref{ssec:computation_HO_near_relative_cycle}, where the corresponding $\HO$ map is described in \cref{prop:HO_in_Rmod}.\\

    \item The corestriction functors and their compatibility with Hochschild chains, as studied in Sections \ref{ssec:corestriction}, \ref{ssec:KS_stack_corestriction_relative} and \ref{subsec:corestrict-HH}. The existence of corestriction functors for the Kashiwara-Schapira stack is established in \cref{ssec:corestriction}, and in the pointed setting in \cref{ssec:KS_stack_corestriction_relative}. These corestriction functors, which are left adjoints to restriction functors, allow for a local-to-global transition from the local computation in the local model from (2) above, near a relative chain, to a global computation on the Lagrangian skeleton. \cref{subsec:corestrict-HH} proves that such functors are compatible with Hochschild chains and the $\HO$ maps, thus allowing for the local computation of the $\HO$ map to extend globally.
\end{enumerate}

\noindent Having established the needed results for each of these three main ingredients in Sections \ref{sec:sheaf_cat_moduli}, \ref{sec:corestriction} and \ref{sec:corestriction_HH}, \cref{thm:main} is proven in \cref{ssec:main_proof}.


\noindent{\bf Acknowledgements}: We are grateful to the referee, whose edits and suggestions have improved the article. R.~Casals thanks the hospitality of the Institute of Advanced Study at Princeton, where a significant part of this article was written. R.~Casals is supported by the National Science Foundation under grants DMS-2505760 and DMS-1942363, a Sloan Research Fellowship of the {\color{black} Alfred P. Sloan Foundation} and a UC Davis College of L\&S Dean's Fellowship. W.~Li is partially supported by the AMS-Simons Travel Grant.\hfill$\Box$


\noindent{\bf Notation}: Given $n\in\N$, we denote $[1,n]:=\{1,\ldots,n\}$ and $k$ denotes a commutative unital ring. Throughout the article, categories will be dg-derived categories unless otherwise specified, i.e.~we work with categories enriched over chain complexes and localized at acyclic complexes. For instance, $\modk$ denotes the dg-derived category of chain complexes over $k$, and derived global sections and derived tensor product are still denoted by $\Gamma$ and $\otimes$. Similarly, $\Loc(\La)$ denotes the dg-derived category of local systems on $\La$, cf.~\cite[Appendix A]{CasalsLi} and references therein. We write $\mbox{dg-cat}_k$ for the $\infty$-category of small dg-categories over $k$, and $\mbox{dg-Cat}_k$ for the $\infty$-category of well-generated dg-categories over $k$, obtained from the corresponding model categories \cite{Tabuada05_ModelStructure,Tabuada_Wellgenerated}. In terms of derived algebraic geometry, we work within the framework of $D^-$-stacks, cf.~ \cite{HAGI,HAGII}, and specifically \cite{ToenVaquie07} for the derived moduli stack of pseudo-perfect objects.\hfill$\Box$


\section{Dg-categories and moduli of sheaves with Legendrian singular support}\label{sec:sheaf_cat_moduli} This section introduces the dg-categories and derived stacks used to state and prove Theorem \ref{thm:main}. The new concepts are defined in Definitions \ref{def:cat_decorated_sheaves} and \ref{def:stack_decorated_sheaves} in Subsections \ref{ssec:dgcat_of_sheaves} and \ref{ssec:derived_stacks}, respectively. Subsections \ref{ssec:t_action_on_moduli} and \ref{ssec:t_action_rank1} discuss an intrinsic action introduced by incorporating basepoints, relating the decorated and undecorated dg-categories. Section \ref{ssec:examples1} provides examples of such derived stacks and their actions for some families of Legendrian links. 
\subsection{Dg-categories of sheaves with Legendrian singular support}\label{ssec:dgcat_of_sheaves} Let $\La\sse (T^*_\infty\R^2,\xi_{st})$ be a Legendrian link and $T\sse\La$ a set of basepoints, with at least one basepoint per component of $\La$. Consider a set of basepoints $\tt=\{t_1,\ldots,t_{|\tt|}\}\sse\La\setminus T$ such that each component of $\La\setminus T$ contains a unique basepoint. Since the data of $T$ is equivalent to the data of $\tt$, we also refer to $\tt$ as a set of basepoints and the roles of $T$ and $\tt$ are interchangeable. A Legendrian link with such choice of basepoints $\tt$ and $T$ is said to be $(\tt,T)$-pointed.

Let $\Sh_\La(\R^2)$ be the dg-derived category of sheaves of $k$-modules in $\R^2$ with singular support contained in $\La$, and $\Sh_\La(\R^2)_0$ the dg-derived category of compactly supported sheaves in $\R^2$ with singular support contained in $\La$, cf.~\cite[Chapter V]{KashiwaraSchapira_Book} or \cite[Section 1]{GKS_Quantization}. See also Appendix \ref{sec:appendix} or \cite[Appendix A]{CasalsLi} for details and further discussions.

The dg-categories $\Sh_\La(\R^2)$ and $\Sh_\La(\R^2)_0$ are Legendrian isotopy invariants of $\La$, cf.~\cite[Section 3]{GKS_Quantization}. Their information only allows for the definition of microlocal monodromies on a given exact Lagrangian filling $L$ of $\La$; microlocal monodromies are defined using the fully faithful embedding $\Loc(L) \hookrightarrow \Sh_\La(\bR^2)_0$, cf.~Subsection \ref{ssec:Lagrangian_fillings_KS_stack} below or \cite{JinTreumann}. These monodromies are only rational functions on the moduli of pseudo-perfect objects (intuitively, on the moduli of Lagrangian fillings of $\La)$, and are typically not regular. In particular, it is challenging to extract geometric information from them, cf.~ e.g.~\cite[Sections 2.8 \& 4.4]{CW} and \cite[Section 8]{CGGLSS} for further discussions.

Microlocal merodromies, allowing for more general parallel transports along relative cycles with decorated ends, have proven more useful, see e.g.~\cite[Section 4]{CW}, \cite[Section 5]{CGGLSS} and \cite[Section 6]{CasalsLi}. Therefore, we now introduce in Definition \ref{def:cat_decorated_sheaves}, and see also Definition \ref{def:stack_decorated_sheaves}, an enhancement of $\Sh_\La(\bR^2)$ or $\Sh_\La(\R^2)_0$ which allows for microlocal merodromies to be defined and used.\footnote{The results and arguments in this section hold for the dg-categories $\Sh_\La(\bR^2)$ and $\Sh_\La(\bR^2)_0$. In fact, by \cite[Cor.~4.22]{GPS3}, the inclusion $\Sh_\La(\bR^2)_0 \hookrightarrow \Sh_\La(\bR^2)$ has a left adjoint that preserves compact objects and thus the derived moduli stack of pseudo-perfect objects of $\Sh_\La(\bR^2)_0$ is an open and closed substack of the corresponding moduli for $\Sh_\La(\bR^2)$. We state the results for $\Sh_\La(\bR^2)_0$ only since it better fits into the comparison with the Kashiwara-Schapira stacks for $\bL$-compressing systems in Section \ref{sec:corestriction}.}\\

Consider the following functor $m_{\La,\tt}$, which records the microstalks at the basepoints $t_i\in \tt$ in the co-direction $\xi_{t_i}\in T_{t_i}^*\R^2$ of the co-normal lift of $\La$. It is defined as follows

$$\displaystyle m_{\La,\tt}:\Sh_\La(\R^2)_0\lr \prod_{i=1}^{|\tt|}\modk,\quad \SF\longmapsto m_{\La,\tt}(\SF):=m_{{\La} \times \bR_+}(t_i,\xi_{t_i})(\SF),$$
where ${\La} \times \bR_+\sse T^*\R^2$ is the conormal cone associated to $\La\sse T^*_\infty\R^2$ and $m_{{\La} \times \bR_+}$ is the microstalk functor of ${\La} \times \bR_+$, e.g.~ as defined in \cite[Definition 3.8]{Nadler16}. See also \cite[Section 6]{KashiwaraSchapira_Book} for details on the microlocal theory of sheaves. Equivalently, $m_{\La,\tt}$ assigns to an object $\SF$ the stalks of the local system $m_\La(\SF)\in\Loc(\La)$ at the points in $\tt$, where $m_\La:\Sh_\La(\R^2)_0\lr \Loc(\La)$ is the microlocalization functor, cf.~Definition \ref{def:microlocalization_functor} below or \cite[Appendix A]{CasalsLi} and references therein.\footnote{Note that here we have implicitly chosen a quasi-equivalence $\mu\mbox{sh}_\La(\La)\cong\Loc(\La)$ between the global sections of the Kashiwara-Schapira stack $\mu\mbox{sh}$ and the dg-derived category of local systems $\Loc(\La)$ on $\La$. This is possible in the case where $\La=S^1\sqcup\ldots\sqcup S^1$ by \cite[Chapter 10]{Guillermou23_SheafSummary}. This is also discussed later in Section \ref{sec:corestriction}.} In this latter perspective, we are implicitly using an equivalence
$$\displaystyle\prod_{i=1}^{|\tt|}\modk\cong \displaystyle\prod_{i=1}^{|\tt|}\Loc(\{t_1,\ldots,t_{|\tt|}\})\cong\Loc(\tt).$$

Consider also the diagonal dg-functor
$$\displaystyle\Delta:\modk\lr\prod_{i=1}^{|\tt|}\modk,$$
which is independent of $\La$ and $T$. The microstalk functor $m_{\La,\tt}$ admits a left adjoint $m^\ell_{\La,\tt}$ because it preserves products, see e.g.~\cite[Section 3.6]{Nadler16} or \cite[Lemma 4.13]{GPS3}. Similarly, $\Delta$ admits a left adjoint $\Delta^\ell$ given by the coproduct. Both functors $m_{\La,\tt}$ and $\Delta$ preserve coproducts, and thus $m^\ell_{\La,\tt}$ and $\Delta^\ell$ preserve compact objects see e.g.~\cite[Theorem 4.14]{GPS3}.

Consider the full dg-subcategories $\Sh^c_\La(\R^2)_0\sse\Sh_\La(\R^2)_0$ and $\modk^c\sse\modk$ of compact objects. The category $\modk^c$ is the dg-subcategory of perfect complexes, i.e.~$\Perf(k)$. These are both smooth dg-categories, cf.~\cite[Corollary 4.26]{GPS3}. In fact, $\Sh_\La^c(\R^2)_0$ is even of finite type by the results from \cite{Nadler17,Starkston} (or by virtue of being equivalent to a wrapped Fukaya category, as proven in \cite{GPS3}). Note that $\Sh_\La^c(\R^2)_0$ is a small dg-category, as it is the subcategory of compact objects of the compactly generated category $\Sh_\La(\R^2)_0$.

\begin{definition}\label{def:cat_decorated_sheaves} Let $\La\sse (T^*_\infty\R^2,\xi_{st})$ be a Legendrian link and $\tt\sse\La$ a set of basepoints, with at least one basepoint per component of $\La$. By definition, the dg-category $\Sh_{\La,\tt}(\R^2)_0$ is the homotopy colimit of the diagram 
\begin{center}  
    \begin{tikzcd} 
    \displaystyle\prod_{i=1}^{|\tt|}\modk\ar[r,"m_{\La,\tt}^\ell"] \ar[d,"\Delta^\ell" left] &  \Sh_\La(\R^2)_0 \\
        \modk&
    \end{tikzcd}
\end{center}
We refer to $\Sh_{\La,\tt}(\R^2)_0$ as the category of compactly supported sheaves with singular support in $(\La,\tt)$.

\hfill$\Box$
\end{definition}

\noindent Here the homotopy colimit is taken in the $\infty$-category $\mbox{dg-Cat}_k$ of well generated dg-categories over $k$, cf.~\cite{Tabuada_Wellgenerated}. Intuitively, $\Sh_{\La,\tt}(\R^2)_0$ captures compactly supported sheaves with singular support on $\La$ with the additional data of their microstalks at the basepoints of $\tt$ and a common identification of these microstalks. 


\subsection{Derived stacks of sheaves with Legendrian singular support}\label{ssec:derived_stacks} Let $\SC$ be a small dg-category and consider the $D^-$-stack $\M_\SC$ of pseudo-perfect objects, as introduced in \cite[Section 3.1]{ToenVaquie07}. It is defined by the functor of points
$$\M_\SC: \mbox{sCAlg}_k\lr\mbox{sSet},\quad \M_\SC(A)=\mbox{Map}_{\operatorname{dg-cat}_k}(\SC^{op},\Perf(A)).$$
Here $\mbox{sCAlg}_k$ is the category of simplicial commutative $k$-algebras (denoted $sk\mbox{-CAlg}$ in \cite[Section 2.3]{ToenVaquie07}), $\mbox{sSet}$ the category of simplicial sets, $\Perf(A)$ is the dg-category of
perfect A-modules (denoted $\hat{A}_{pe}$ in \cite[Section 2.4]{ToenVaquie07}), and $\mbox{Map}_{\operatorname{dg-cat}_k}$ denotes the mapping space of a model structure for the category of small dg-categories. See \cite[Section 3]{ToenVaquie07} for the necessary details.\footnote{Note that, in zero characterstic, $\mbox{sCAlg}$ is equivalent to $\mbox{cdga}_{\leq 0}$, via the appropriate version of the Dold-Kan correspondence. Therefore, in zero characteristic, the inputs for (functor of points of) the derived stack $\M_\SC$ can be taken to be non-positively graded commutative dg-algebras.} The assignment $\SC\mapsto\M_\SC$ defines a functor
\begin{equation}\label{eq:modulistack_functor}
\M:\mbox{Ho}(\mbox{dg-cat}_k)^{op}\lr D^-\mbox{St}(k)
\end{equation}
between the opposite of the homotopy category $\mbox{Ho}(\mbox{dg-cat}_k)$ of dg-categories and the category $D^-\mbox{St}(k)$ of $D^-$-stacks, the functor being enriched over the homotopy category of $\mbox{sSet}$. In particular, given a dg-functor $f: \cC \lr \cD$, there is a map $\M(f): \M(\cD) \lr \M(\cC)$, which sends a pseudo-perfect object $\cD^{op} \lr \Perf(k)$ to its pull-back to $\cC^{op} \lr \Perf(k)$.

By \cite[Prop. 3.4]{ToenVaquie07}, the functor $\M$ admits a left adjoint and therefore $\M$ preserves homotopy limits. In particular, it sends a homotopy pullback in $\mbox{Ho}(\mbox{dg-cat}_k)^{op}$ to a homotopy pullback in $D^-\mbox{St}(k)$. Since homotopy pullbacks in $\mbox{Ho}(\mbox{dg-cat}_k)^{op}$ are homotopy pushouts in $\mbox{Ho}(\mbox{dg-cat}_k)$, the functor $\M$ applied to the homotopy pushout
\begin{center}
\begin{equation}\label{def:htpy_pushout_dgcats}
\begin{tikzcd} 
    \displaystyle\prod_{i=1}^{|\tt|}\Perf(k) \ar[r,"m_{\La,\tt}^\ell"] \ar[d,"\Delta^\ell" left] &  \Sh^c_\La(\R^2)_0 \ar[d] \\
         \Perf(k)\ar[r]& \textcolor{blue}{\Sh^c_{\La,\tt}(\R^2)_0}
    \end{tikzcd}
\end{equation}
\end{center}
from Definition \ref{def:cat_decorated_sheaves}, after restricting to compact objects, yields a homotopy pullback
\begin{center}
\begin{equation}\label{def:htpy_pullback_stacks}
\begin{tikzcd} 
\M\left(\displaystyle\prod_{i=1}^{|\tt|}\Perf(k)\right) &  \M(\Sh^c_\La(\R^2)_0) \ar[l,"\M(m_{\La,\tt}^\ell)" above] \\
    \M(\Perf(k)) \ar[u,"\M(\Delta^\ell)"] &  \textcolor{blue}{\M(\Sh^c_{\La,\tt}(\R^2)_0)}.\ar[l] \ar[u]
    \end{tikzcd}
\end{equation}
\end{center}
Both in the homotopy pushout (\ref{def:htpy_pushout_dgcats}) and in the homotopy pullback (\ref{def:htpy_pullback_stacks}) we have highlighted the homotopy colimit and homotopy limit in color blue, for clarity. By \cite[Thm.~3.21]{Nadler16} or \cite[Cor.~4.23]{GPS3}, via the Yoneda embedding, the pseudo-perfect objects in $\Sh^c_\La(\R^2)_0$ are those sheaves in $\Sh^c_\La(\R^2)_0$ with perfect stalks. Thus, the map 
$$\M(\Sh^c_\La(\R^2)_0) \lr \M\left(\prod_{i=1}^{|\tt|}\Perf(k)\right)$$
on $k$-points is given by the microstalk functor $m_{\Lambda,\tt}$ at the basepoints of $\tt$. Similarly, identifying pseudo-perfect objects in $\Mod(k)$ with $\Perf(k)$ by the Yoneda embedding, the map
$$\M(\Perf(k)) \lr \M\left(\prod_{i=1}^{|\tt|}\Perf(k)\right)$$
on $k$-points is also given by the diagonal map $\Delta$. To ease notation, we denote $\M(\La):=\M(\Sh^c_\La(\R^2)_0)$.

\begin{definition}\label{def:stack_decorated_sheaves} Let $\La\sse (T^*_\infty\R^2,\xi_{st})$ be a Legendrian link and $\tt\sse\La$ a set of basepoints, with at least one basepoint per component of $\La$. By definition, the $D^-$-stack $\M({\La,\tt})$ is the moduli $\M(\Sh^c_{\La,\tt}(\R^2)_0)$ of pseudo-perfect objects of the smooth dg-category $\Sh^c_{\La,\tt}(\R^2)_0$. We refer to $\M({\La,\tt})$ as the moduli of (compactly supported) sheaves with singular support in $(\La,\tt)$.
\hfill$\Box$
\end{definition}

\noindent Note that $\M({\La,\tt})$ depends on the choice of $\tt$, e.g.~the dimension of its $0$-truncation $t_0\M({\La,\tt})$ depends on the cardinality of $\tt$. That said, there is a natural choice for the cardinality of $\tt$ given $\La$: namely, we can always choose exactly one basepoint per component, i.e.~$|\tt|=|\pi_0(\La)|$. For each fixed microlocal rank $n \in \bN$, the corresponding connected component of $\M_n({\La}) \subset \M({\La})$ is a (stack) quotient of a component $\M_n({\La,\tt}) \subset \M({\La,\tt})$ under an algebraic action. See Lemma \ref{lem:microlocalmonodromy_via_stalks} for details and note that this action is typically not free.


\subsection{A $\tt$-action on the derived moduli stacks of sheaves}\label{ssec:t_action_on_moduli} The freedom of the choice for the location of the basepoints $\tt\sse\La$ yields interesting automorphisms of $\M(\Sh^c_{\La,\tt}(\R^2)_0)$. Though these automorphisms are not strictly needed to prove Theorem \ref{thm:main}, they provide a conceptually useful perspective on the torus actions studied in \cite{CGGS,CGGLSS} and, in some cases, allow for neater descriptions of the derived stacks $\M{(\La,\tt)}$ and $\M(\La)$, see Subsection \ref{ssec:examples1}. These automorphisms are constructed as follows.

Given a set of base points $\tt \subset \Lambda$, let us describe an object in $\Sh_{\La,\tt}(\bR^2)_0$ by a pair $(\SF,(\phi_i))$ consisting of a sheaf $\SF$ and a set of common trivializations $(\phi_i)_{i\in[1,|\tt|]}$ for the microstalks of $\SF$ at the basepoints in $\tt$, where $\phi_i: V \lr m_{\Lambda,t_i}(\SF)$ are quasi-isomorphisms from a given fixed chain complex $V$. Consider the $\infty$-group of automorphisms 
$$G(\tt):=\Aut\left(\displaystyle\prod_{i=1}^{|\tt|}\Perf(k)\right),$$
understood as a group object in $\infty$-groupoids, see e.g.~\cite[Section 6.1.2]{HTT} or \cite[Chapter V]{GoerssJardine99_SHT}. Such automorphisms yield automorphisms of $\Sh_{\La,\tt}^c(\bR^2)_0$ that reparametrize the trivialization at the microstalks, as made precise in the following:

\begin{lemma}\label{lem:microlocalmonodromy_via_stalks}
    Let $\La \subset (T^*_\infty\bR^2, \xi_{st})$ be a Legendrian link and $\tt \subset \La$ a set of basepoints. Then there is a natural morphism of $\infty$-groups
    \[
    P:G(\tt) \lr \Aut\left(\Sh_{\La,\tt}^c(\bR^2)_0\right)
    \]
    such that $\displaystyle(c_i)\in G(\tt)$ acts on an object $(\SF,(\phi_i))\in \Sh_{\La,\tt}^c(\bR^2)_0$ by
    $$P_{(c_i)}(\mathscr{F}, (\phi_i)) = (\mathscr{F}, (c_i \circ \phi_i)).$$
\end{lemma}
\begin{proof}
The homotopy colimit diagram (\ref{def:htpy_pushout_dgcats}) of dg-categories reads
    \[\begin{tikzcd} 
    \displaystyle\prod_{i=1}^{|\tt|}\Perf(k) \ar[r,"m_{\La,\tt}^\ell"] \ar[d,"\Delta^\ell" left] &  \Sh^c_\La(\R^2)_0 \ar[d] \\
    \Perf(k) \ar[r]& \Sh^c_{\La,\tt}(\R^2)_0.
    \end{tikzcd}\]
By construction, $G(\tt)$ acts on the upper-left category. The $\infty$-group morphism $P$ is built by extending this action to $\Perf(k)$ and $\Sh^c_\La(\R^2)_0$ so that $\Delta^\ell$ and $m_{\La,\tt}^\ell$ are respectively equivariant, thus inducing an action on the homotopy pushout $\Sh^c_{\La,\tt}(\R^2)_0$.  For that, we first consider the trivial action of $G(\tt)$ on the bottom left dg-category $\Perf(k)$. Now, for any linear map $\phi: V \lr W$ and $(c_i)\in G(\tt)$, the corresponding map $\phi^{|\tt|}: V^{|\tt|} \lr W^{|\tt|}$ induced by the diagonal functor is homotopic to its conjugation $(c_i^{-1}) \circ \phi^{|\tt|} \circ (c_i): V^{|\tt|} \lr W^{|\tt|}$. Therefore, the natural action of $G(\tt)$ on the domain of $\Delta^\ell$ and the trivial action of $G(\tt)$ on its codomain are compatible. Second, similar considerations apply to $m_{\La,\tt}^\ell$. Indeed, by construction, this functor factors as
$$m_{\La,\tt}^\ell: \displaystyle\prod_{i=1}^{|\tt|}\Perf(k) \lr \Loc^c(\Lambda) \lr \Sh_\La^c(\bR^2)_0,$$
and we consider the trivial action of $G(\tt)$ on $\Loc^c(\Lambda)$ and $\Sh_\La^c(\bR^2)_0$. As before, given any morphism of local system $\phi: (V, \mu) \lr (W, \eta)$ and $(c_i)\in G(\tt)$, the morphism $\phi^{|\tt|}: V^{|\tt|} \lr W^{|\tt|}$ induced by the restriction functor to $\tt$ is homotopic to the morphism $(c_i^{-1}) \circ \phi^{|\tt|} \circ (c_i): V^{|\tt|} \lr W^{|\tt|}$. Hence the natural action of $G(\tt)$ on the domain of $m_{\La,\tt}^\ell$ is compatible with the trivial action of $G(\tt)$ on its codomain.

\noindent Since the dg-nerves of the three dg-categories involved\footnote{Here we use again that $m_{\La,\tt}^\ell$ factors through $\Loc^c(\La)$.} in the homotopy colimit diagram (\ref{def:htpy_pushout_dgcats}), i.e.~
$$\displaystyle\prod_{i=1}^{|\tt|}\Perf(k),\,\Perf(k)\,\mbox{ and }\,\Loc^c(\La),$$ are all 1-dimensional simplicial sets, the higher coherence data is automatically compatible under the above functors. Therefore, we obtain an action on their homotopy colimit $\Sh^c_{\La,\tt}(\bR^2)_0$.
    
\noindent The formula for the action on objects follows by construction. Indeed, since an object $(\SF,(\phi_i))\in \Sh^c_{\La,\tt}(\bR^2)_0$ is determined by a sheaf $\mathscr{F} \in \Sh_{\La}(\bR^2)_0$ together with a common trivialization $(\phi_i)$ of its microstalks, via
    $$m_{\La,\tt}: \Sh_\La(\bR^2)_0 \lr \Loc(\tt) \xrightarrow{\sim} \displaystyle\prod_{i=1}^{|\tt|}\Mod(k),$$
    any element $(c_i)\in G(\tt)$ acts as the identity on the sheaf in $\Sh_\La(\bR^2)_0$ and acts on the trivialization maps $(\phi_i)$ by left multiplication, as this is how $G(\tt)$ acts componentwise on (products of) perfect modules.
\end{proof}

\begin{remark} In the proof of \cref{lem:microlocalmonodromy_via_stalks}, we can alternatively verify the compatibility of the higher coherence data of the action as follows. Consider the universal Cartesian filtration over $\mathrm{B}G(\tt)$, whose fiber is the dg-nerve of $\prod_{i=1}^{|\tt|}\Perf(k)$ by straightening, see e.g.~ \cite[Section 3.2.1]{HTT}. Consider also the trivial Cartesian fibrations over $\mathrm{B}G(\tt)$ whose fibers are $\Sh^c_{\La,\tt}(\bR^2)_0$ and $\Perf(k)$, respectively. Then taking the homotopy colimit over the base gives a Cartesian fibration over $\mathrm{B}G(\tt)$ whose fiber is $\Sh^c_{\La,\tt}(\bR^2)_0$ and it is equivalent to the group action in \cref{lem:microlocalmonodromy_via_stalks} by unstraightening, cf.~ \cite[Section 3.1.3]{HTT}.\hfill$\Box$
\end{remark}

\begin{example}\label{ex:monodromy_under_action} To illustrate the action of \cref{lem:microlocalmonodromy_via_stalks} in a simple instance, consider the following homotopy colimit diagram
\begin{equation}\label{eq:action_unknot}
\begin{tikzcd}
\displaystyle\prod_{i=1}^{|\tt|}\Perf(k) \ar[r,"m_{\La,\tt}^\ell"] \ar[d,"\Delta^\ell" left] &  \Perf(k) \ar[d] \\
\Perf(k) \ar[r]& \displaystyle\Loc^c\left(\bigvee_{i=1}^{|\tt|-1}S^1\right).
\end{tikzcd}
\end{equation}
Here we have replaced $\Sh_\Lambda^c(\bR^2)_0$ by $\Perf(k)$ and $\Sh_{\Lambda,\tt}^c(\bR^2)_0$ by $\Loc^c(\bigvee_{i=1}^{|\tt|-1}S^1)$ in the homotopy colimit (\ref{def:htpy_pushout_dgcats}), which is for instance what happens in the case of $\La$ being the max-tb Legendrian unknot. An object in the colimit category of \cref{eq:action_unknot} is given by a pair $(U, (\phi_i))$, where $U\in\Perf(k)$ and $\phi_i: U \lr V$ are isomorphisms with a given complex $V$. Such data determines the monodromy around the $i$th circle $S^1$ in the wedge, which is given by $\phi_{i+1} \circ \phi_{i}^{-1}: V \lr V$. Therefore, the action $(V, (\phi_i)) \lr (V, (c_i \circ \phi_i))$ in \cref{lem:microlocalmonodromy_via_stalks} modifies the monodromy by conjugation, i.e.~via
\begin{equation*}(\phi_{i+1} \circ \phi_{i}^{-1})\longmapsto c_{i+1} \circ (\phi_{i+1} \circ \phi_{i}^{-1}) \circ c_i^{-1}.\eqno\qed\end{equation*}
\end{example}

\noindent We denote the action of $(c_i)_{i\in[1,|\tt|]} \in G(\tt)$ on $\Sh^c_{\La,\tt}(\bR^2)_0$ via \cref{lem:microlocalmonodromy_via_stalks} by 
$$P_{(c_i)}: \Sh^c_{\La,\tt}(\bR^2)_0 \lr \Sh^c_{\La,\tt}(\bR^2)_0,$$
and the induced action on derived the moduli stack $\M(\Lambda, \tt)$ by
$$\P_{(c_i)}: \M(\Lambda, \tt) \lr \M(\Lambda, \tt).$$
Note that the $G(\tt)$-action on $\Sh^c_{\La,\tt}(\bR^2)_0$ preserves microstalks. Hence, when we restrict to the connected component $\M_n(\Lambda, \tt) \subset \M(\La, \tt)$ consisting of microlocal rank $n$ sheaves, the $G(\tt)$-action above yields an algebraic action of the algebraic group $\prod_{i=1}^{|\tt|}\GL_n$ on the derived stack $\M_n(\Lambda, \tt)$.

\begin{example}\label{ex:Fp_action_through_monodromy}
Let $(\La,\tt)$ have a unique basepoint in its $k$-th component. Given an automorphism $c_k \in \Aut(\Perf(k))$, the induced automorphism $\P_{c_k}\in\Aut(\M({\La,\tt}))$ acts on a point $(\SF,(\phi_i))$, with underlying sheaf $\SF\in \M({\La})$, by left multiplication $c_k \circ \phi_k$ on the trivialization $\phi_k$ of the microstalk at that unique basepoint in the $k$-th component. This action factors through the natural action on the local system $m_\La(\SF)\in\Loc(\La)$ and, as in \cref{ex:monodromy_under_action}, it modifies the microlocal monodromy $mon(\mathscr{F})_k$ at the basepoint by conjugation, i.e.~ $mon(\mathscr{F})_k\longmapsto c_k \circ mon(\mathscr{F})_k \circ c_k^{-1}$.\hfill$\Box$
\end{example}

\noindent The most important property of the action in \cref{lem:microlocalmonodromy_via_stalks}, and the reason we present it explicitly, is that it presents the stack $\M_n({\La})$ as a quotient of $\M_n({\La,\tt})$. Indeed, by the homotopy colimit diagram \eqref{def:htpy_pushout_dgcats}, we have a homotopy pull-back diagram
\[\begin{tikzcd}
    \M_n({\La,\tt}) \ar[r] \ar[d] & \M_n({\La}) \ar[d] \\
    \BGL_n \ar[r] & \displaystyle\prod_{i=1}^{|\tt|}\BGL_n.
\end{tikzcd}\]
By construction, the bottom functor between classifying spaces is induced by the diagonal inclusion $\GL_n \lr \prod_{i=1}^{|\tt|}\GL_n$, and thus can be presented as a homotopy quotient by $\prod_{i=1}^{|\tt|-1}\GL_n$. Therefore, the stack $\M_n({\La})$ can be constructed from $\M_n({\La,\tt})$ by quotienting with $\prod_{i=1}^{|\tt|-1}\GL_n$ via the action from \cref{lem:microlocalmonodromy_via_stalks}.

\begin{remark}[A comparison with the basepoint action in the Floer-theoretic perspective]
The action in Lemma \ref{lem:microlocalmonodromy_via_stalks} defines a natural transformation of the identity functor $\mbox{id}:\Sh_{\Lambda,\tt}^c (\bR^2)_0 \lr \Sh_{\Lambda,\tt}^c(\bR^2)_0$ which acts on the trivializations by left multiplication by the monodromy around that component. Such action can be understood by taking based loops on the morphism $P$ of \cref{lem:microlocalmonodromy_via_stalks}. In particular, for microlocal rank 1, we have morphisms
$$\Omega \GL_1 \lr \Omega_{\mathrm{id}} \Aut(\Sh_{\Lambda,\tt}^c(\bR^2)_0) \xrightarrow{\sim} \Aut(\mathrm{id}_{\Sh_{\Lambda,\tt}^c(\bR^2)_0}).$$
Since $\Omega \GL_1 \cong \bZ$, we have a natural automorphism $t_\SF$ for any $\SF \in \Sh_{\Lambda,\tt}^c(\bR^2)_0$. This automorphism can be described geometrically as follows.

\noindent Consider a family of base points $\{\tt_s\}_{s\in[0,1]}$, $\tt_s\sse\La$, varying smoothly on the parameter $s\in[0,1]$. Let us say that such family goes around the $k$-th clock if $\tt_0=\tt_1$ and
\begin{enumerate}
    \item each point $t_{k}\in \tt_0$ in the $k$-th component $\La_k\sse\La$ moves around once from $s=0$ to $s=1$ and does not intersect $\tt_0$ for $s \in (0, 1)$,
    \item each point in $\tt_s$ in the $j$-th component stays fixed for all $s\in[0,1]$ if $j\neq k$.
\end{enumerate}
Then the above automorphism $t_\SF$ is componentwise given by moving the basepoint around the $k$th clock, which changes the trivialization of the microstalk by left multiplication of the monodromy around that $k$th component. This coincides with the Floer-theoretic description in \cite[Section 5]{CasalsNg}, which presents such actions from the perspective of the Legendrian contact dg-algebra of $(\La,\tt)$.\hfill$\Box$
\end{remark}
\color{black}


\subsubsection{Multiple basepoints per component} For the case of $\tt$ with multiple basepoints in one component of $\La$, we can reduce to the case of one basepoint per component as follows. Consider a subset $\tt' \subset \tt \subset \Lambda$ that still has at least one point per component. The homotopy colimit diagram (\ref{def:htpy_pushout_dgcats}) can be broken down into the following diagram
\begin{center}
    \begin{tikzcd}
\displaystyle\prod_{i=1}^{|\tt|}\Perf(k) \ar[r] \ar[d] & \displaystyle\prod_{i=1}^{|\tt|-|\tt'|}\Perf(k) \times \Perf(k) \ar[r] \ar[d] & \Perf(k) \ar[d] \\
    \Sh_{\Lambda}^c(\bR^2)_0 \ar[r] & \Sh_{\Lambda,\tt'}^c(\bR^2)_0 \ar[r] & \Sh_{\Lambda,\tt}^c(\bR^2)_0.
    \end{tikzcd}
\end{center}
Then the maps of derived stacks from (\ref{def:htpy_pullback_stacks}) can be broken into the diagram
\begin{center}
    \begin{tikzcd}
    \M_n({\La,\tt}) \ar[r] \ar[d] & \M_n({\La,\tt'}) \ar[r] \ar[d] & \M_n({\La}) \ar[d] \\
    \BGL_n \ar[r] & \displaystyle\prod_{i=1}^{|\tt|-|\tt'|}\BGL_n \times \BGL_n \ar[r] & \displaystyle\prod_{i=1}^{|\tt|}\BGL_n.
    \end{tikzcd}
\end{center}
\noindent In this particular situation, $\M_n({\La,\tt})\lr\M_n({\La,\tt'})$ is a $\displaystyle\GL_n^{|\tt|-|\tt'|}$-principal bundle with a section given by identifying the microstalks at the extra basepoints in $\tt \setminus \tt'$ with their adjacent basepoints in $\tt'$ (in their same component). Thus we have an isomorphism
\begin{equation}\label{eq:multipl_basepoints_component}\M_n({\La,\tt})\cong \M_n({\La,\tt'})\times \GL_n^{|\tt|-|\tt'|}.
\end{equation}
Due to this isomorphism, we often directly study the case where $\tt$ has exactly one basepoint per component of $\La$, as adding basepoints to a given component does not particularly enrich the symplectic geometry in this framework, only adding $\GL_n$-factors.

\subsection{Microlocal rank 1 case of the basepoint action}\label{ssec:t_action_rank1} Let us focus on $\M_1({\La,\tt})\sse \M({\La,\tt})$, the component of $\M({\La,\tt})$ consisting of sheaves singularly supported in $(\La,\tt)$ with microlocal rank $1$. This is the component that has been most studied in the literature and it is of particular interest due to its connection to cluster algebras, see e.g.~\cite{CasalsHonghao,CW,CasalsZaslow,CGGS,CGGS2,CGGLSS}. The component $\M_1({\La,\tt})$ parametrizes sheaves with singular support on $\La$ such that their microstalks at the basepoints of $\tt$ are 1-dimensional $k$-modules, all commonly identified. Similarly, we denote by $\M_1({\La})$ the component of $\M({\La})$ consisting of microlocal rank $1$ sheaves singularly supported in $\La$.

\noindent Consider the components $\BGL_1(k)$ and $\BGL_1(k)\times\stackrel{(|\tt|)}{\ldots}\times \BGL_1(k)$ of 
$$\M(\Perf(k))\quad\mbox{and}\quad \displaystyle\M\left(\prod_{i=1}^{|\tt|}\Perf(k)\right).$$
To ease notation, let us denote $H:=\GL_1(k)$ and $G:=H\times\stackrel{(|\tt|)}{\ldots}\times H$. Here each $\GL_1$ stabilizer in $\BGL_1\times\stackrel{(|\tt|)}{\ldots}\times \BGL_1$ is geometrically understood as the automorphisms of the microstalk at the corresponding basepoint of $\tt$. Note also that in this Abelian $\GL_1$-case, the action restricted to $H$ is trivial. The homotopy pullback diagram (\ref{def:htpy_pullback_stacks}) restricted to microlocal rank 1 reads
\begin{center}
\begin{equation}\label{def:htpy_pullback_stacks_rank1}
\begin{tikzcd} 
           \M_1(\La,\tt) \ar[r] \ar[d]&  \M_1(\La)\ar[d,"\M(m_{\La,\tt}^\ell)"] \\
    BH \ar[r,"B\Delta"]  &  BG,
    \end{tikzcd}
\end{equation}
\end{center}
where we have used that $\M(\Delta^\ell):BH\lr BG$ is equivalent to the map $B\Delta$ of classifying stacks induced by the diagonal group morphism $\Delta:H\lr G$. The homotopy fiber of $B\Delta$ is $G/H$, and in fact $EG\lr EG/H\cong EG\times_G (G/H) $ is a universal principal $H$-bundle, so $EG/H$ is
a classifying space for $H$. Therefore, $B\Delta$ can be modeled by the natural projection $EG\times_G (G/H)\lr BG$. Hence, by construction, $\M_1({\La,\tt}) \lr \M_1({\La})$ is a $(G/H)$-principal bundle.



\subsection{A few examples}\label{ssec:examples1} Here are descriptions of the derived stacks $\M_1({\La})$ and $\M_1({\La,\tt})$ and the basepoint action in some cases of interest. In all the following examples, $\Lambda$ is a Legendrian $(-1)$-closure of a positive braid, and by a direct generalization of \cite[Prop.~6.1]{ChantraineNgSivek}, for any pseudo-perfect object in the category of sheaves valued in a simplicial commutative ring, the cotangent complexes of $\M_1({\La})$ and $\M_1({\La,\tt})$ are concentrated in non-negative degrees. Hence by \cite[Theorem 2.2.2.6, Corollary 2.2.4.6]{HAGII} and \cite[Proposition 1.38]{Pridham} the derived stacks $\M_1({\La})$ and $\M_1({\La,\tt})$ are isomorphic to (underived) Artin stacks which parametrize microlocal rank 1 sheaves.

\subsubsection{Legendrian knots}\label{ex:Leg_knots} Let $\La\sse(\R^3,\xi_\st)\sse (T^*_\infty\R^2,\xi_\st)$ be a Legendrian knot with $\tt=\{t\}$ consisting of a single basepoint. Then $\Delta:H\lr G$ is the identity morphism and the homotopy pullback in Equation (\ref{def:htpy_pullback_stacks_rank1}) is the base change along the identity morphism. Therefore $\M_1({\La,\tt})\cong\M_1({\La})$ are isomorphic. Two remarks: 

\begin{itemize}
    \item[(i)] Let $\La\sse(\R^3,\xi_\st)$ be a Legendrian knots of the form $\La=\La_\beta$, $\beta$ a positive braid and $\La_\beta$ its $(-1)$-closure, see \cite[Section 2.2]{CasalsNg}. Then the derived stacks $\M_1({\La_\beta})$ are isomorphic to quotient stacks $[X_\beta/\GL_1]$ where $X_\beta$ are smooth affine varieties and $\GL_1$ acts trivially on $X_\beta$, cf.~\cite[Section 4.1]{CW}. In short, in this case we have
    $$\M_1({\La_\beta})\cong [X_\beta/\GL_1] \cong X_\beta \times [*/\GL_1] \cong X_\beta\times \BGL_1$$
    where $X_\beta$ is a smooth affine scheme. As stated in (ii) below, these $X_\beta$ are closely related to the braid varieties $X(\beta)$ studied in \cite{CGGS,CGGS2,CGGLSS}. Also, equations expressing $X_\beta$ as a complete intersection can be readily read from a braid word for $\beta$, cf.~\cite[Section 3]{CGGLSS} (or \cite[Section 5]{CasalsNg} from the perspective of the Legendrian contact dg algebra).\\

    \item[(ii)] For a Legendrian knot with basepoints $\tt=\{t_1,\ldots,t_{n}\}$, the action of $G/H$ on $\M_1({\La,\tt})$ is free and $\M_1({\La,\tt})\cong\M_1({\La})\times\GL_1^{n-1}$, as per Equation (\ref{eq:multipl_basepoints_component}). Let $\La\sse(\R^3,\xi_\st)$ be a Legendrian knot of the form $\La=\La_{\beta\delta(\beta)}$ , with one basepoint per strand of $\beta$. By \cite[Section 4.1]{CW} or a direct generalization of \cite[Proposition 5.8]{CasalsLi}, we have
    $$\M_1({\Lambda_{\beta\delta(\beta)},\tt}) \cong [X(\beta)/\GL_1] \cong X(\beta) \times [*/\GL_1] \cong X(\beta) \times \BGL_1,$$
    where $X(\beta)$ is the braid variety associated to $\beta$, see e.g.~\cite[Section 2]{CGGS} or \cite[Section 3]{CGGLSS}.
    Specifically, per Equation (\ref{eq:multipl_basepoints_component}), we have a regular isomorphism $X(\beta)\cong X_{\beta\delta(\beta)}\times \GL_1^{n-1}$, where $\delta(\beta)$ is a braid lift of the Demazure product of $\beta$ and $n$ is the number of strands.
\end{itemize}


\subsubsection{Hopf link, Part I: $\M_1({\La,\tt})$ and $\M_1({\La})$}\label{ex:Hopf_link} Let $\La=\La_1\cup \La_2\sse(\R^3,\xi_\st)\sse (T^*_\infty\R^2,\xi_\st)$ be the max-tb Hopf link, with both components $\La_1,\La_2$ max-tb unknots. Consider the set of basepoints $\tt=\{t_1,t_2\}$, with $t_i\in \La_i$, one per component. The homotopy pullback in Equation (\ref{def:htpy_pullback_stacks_rank1}) and the basepoint action are explicitly described as follows, cf.~
\cite[Section 4.1]{CW}. Consider the polynomial ring $\Z[x,y]$ and its localization $\Z[x,y]_{(1+xy)}$ at the principal ideal $(1+xy)$. Define the affine scheme $X:=\Spec \Z[x,y]_{(1+xy)}$. One can show that the basepoint action from Lemma \ref{lem:microlocalmonodromy_via_stalks} can be described as follows: consider the algebraic action of $G:=\GL_1\times \GL_1$ on $X$ defined by 
$$G\times X\lr X,\quad (t_1,t_2;x,y)\longmapsto (t_1xt_2^{-1},t_2yt_1^{-1}).$$
Here we interpret the scheme $X$ as a 0-geometric Artin 0-stack, cf.~\cite[Prop. 2.1.2.1]{HAGII}. Note also that under $\Delta:H\lr G$ the diagonal morphism, the diagonal subgroup $\Delta(H)=\{(t_1,t_2):t_1=t_2\}\sse G$ acts trivially on $X$. Then the moduli stacks of sheaves appearing in Equation (\ref{def:htpy_pullback_stacks_rank1}) are
$$\M_1({\La,\tt})\cong[X/H]\cong X\times BH\cong X\times \BGL_1\quad\mbox{and}\quad \M_1({\La})\cong[X/G].$$
The two input morphisms $\M_1(m^\ell_{\La,\tt}):[X/G]\lr BG$ and $B\Delta:BH\lr BG$ in the homotopy pullback Equation (\ref{def:htpy_pullback_stacks_rank1}) are given by the canonical map $[X/G]\lr BG$, induced by $X\lr \{\mbox{pt}\}$, and the diagonal map $B\Delta:\BGL_1\lr\BGL_1\times\BGL_1$. The induced morphism $\M_1({\La,\tt})\lr BH$ is also given by the canonical map $[X/H]\lr BH$. The last induced $\M_1({\La,\tt})\lr \M_1({\La})$ in Equation (\ref{def:htpy_pullback_stacks_rank1}) is an algebraic quotient by $G/H$. This latter quotient can be realized by the action of $K:=\GL_1$ on $[X/H]$ induced by the action
\begin{equation}\label{eq:K_action_HopfLink}
K\times X\lr X,\quad (\tau;x,y)\longmapsto (\tau x,\tau^{-1}y),
\end{equation}
where we have taken the slice $\tau=t_1t_2^{-1}$ to represent the quotient $K\cong G/H$. In summary, with the notation and actions above, the homotopy pullback Equation (\ref{def:htpy_pullback_stacks_rank1}) reads
\begin{center}
\begin{equation}\label{def:htpy_pullback_stacks_rank1_example}
\begin{tikzcd} 
           {[X/H]} \ar[r] \ar[d]&  {[X/G]}\ar[d] \\
    BH \ar[r,"B\Delta"]  &  BG,
    \end{tikzcd}
\end{equation}
\end{center}
where both vertical maps are the canonical projections induced by $X\lr\{\mbox{pt}\}$ and both horizontal maps can be modeled after $(G/H)$-principal bundles. In this context, the $(G/H)$-principal bundle $[X/H]\lr [X/G]$ is classified by the composition $[X/G]\lr BG \lr B(G/H)$.


\subsubsection{Hopf link, Part II: Lagrangian fillings}\label{ex:Hopf_link_fillings}
The quotient stack $\M_1({\La})\cong[X/G]$ in Example \ref{ex:Hopf_link} has a natural interpretation in terms of Lagrangian fillings of the Hopf link, as follows. There are three types of orbits in $X=\Spec \Z[x,y]_{(1+xy)}$ for the action Equation (\ref{eq:K_action_HopfLink}):

\begin{enumerate}
    \item If $xy=\alpha$, $\alpha\in k$ and $\alpha\neq0,-1$, then $K$ acts freely and transitively on $O_\alpha:=\{xy=\alpha\}\sse X$. Therefore, the orbit $O_\alpha$ gets quotiented down to a point $[O_\alpha]\in\M_1({\La})$ with no stabilizer coming from this action.\\

    \item If $y=0$ and $x\neq0$, then $K$ acts freely and transitively on $O_x:=\{x\neq0,y=0\}\sse X$. This orbit $O_x$ gets quotiented down to a point $[O_x]\in\M_1({\La})$ with no stabilizer coming from this action. Similarly, if $x=0$ and $y\neq0$, then $K$ acts freely and transitively on $O_y:=\{x=0,y\neq0\}\sse X$ and $O_y$ gets quotiented down to a point $[O_y]\in\M_1({\La})$ with no stabilizer coming from this action.\\

    \item The point $O_0:=\{(0,0)\}\in X$ is a fixed points of the action. It is thus represented as a point $[O_0]\in\M_1({\La})$ with stabilizer $K$ coming from this action.
\end{enumerate}

\noindent As before, in all the three cases above, there is always the additional stabilizer $H$ in all points of $\M_1({\La})$ coming for $BH$. Therefore, the stabilizers in $\M_1({\La})=[X/G]$ of the points $O_\alpha$, $\alpha\neq0$, $O_x$ and $O_y$ are $H$ and the stabilizer of $O_0$ is $G$. Now, the Hopf link admits the following three exact Lagrangian fillings:
\begin{enumerate}
    \item Two embedded exact Lagrangian cylinders $L$ and $L'$, see e.g.~\cite{EHK,Pan-fillings}, which are not Hamiltonian isotopic to each other (relative to their boundary $\La$).\\

    \item An (unobstructed) immersed Lagrangian union $L_0$ of two embedded Lagrangian disks, with a unique transverse intersection point. Each embedded Lagrangian disk fills a max-tb unknotted component of the Hopf link $\La$.
\end{enumerate}

\noindent Interpreting $\M_1({\La})$ as the moduli stack of Lagrangian fillings with $\GL_1$-local systems, we have:
\begin{enumerate}
    \item The Lagrangian filling $L$ is encoded by the (reduced) substack
    $$\left(\bigcup\nolimits_{\alpha\neq0,-1} [O_\alpha]\right)\cup [O_x]\sse \M_1({\La})$$
    where each point in this substack is a choice of $\GL_1$-local system in $L$. Similarly, the Lagrangian filling $L'$ is encoded by the (reduced) substack
    $$\left(\bigcup\nolimits_{\alpha\neq0,-1} [O_\alpha]\right)\cup [O_y]\sse \M_1({\La}),$$
    where again each point in this substack is a choice of $\GL_1$-local system in $L'$. Note that for a choice of commutative ring $k$, both these substacks are isomorphic to $k^*/k^*$, which are indeed $\GL_1(k)$-local systems on a cylinder; see e.g.~\cite[Section 2]{TreumannZaslow}.\\

    \item If one considers local systems on $L_0$, then $L_0$ is represented by the point $[O_0]\in \M_1({\La})$.\footnote{In the appropriate sense, one could consider more general sheaves in $L_0$ so that this filling would be represented by the entirety of $\M_1({\La})$. That would be the case if we considered constructible sheaves on one of the disks stratified by the intersection point with the other disk and its complement. The fact that such sheaves give the entire $\M_1({\La})$ can be justified using Section \ref{sec:corestriction} below, cf.~Theorem \ref{thm:invariance}.} It is relevant to note that $[O_0]$ cannot be separated from either $[O_x]$ or $[O_y]$. We interpret this non-separatedness as an algebraic incarnation of the fact that both fillings $L$ and $L'$, endowed with local systems giving $[O_x]$ and $[O_y]$, can be obtained from $L_0$ by resolving the immersed double point via Polterovich surgery. Therefore, in this sense, $[O_0]$ should lie as close as possible to both $[O_x]$ and $[O_y]$ inside of the moduli $\M_1({\La})$.\hfill$\Box$
\end{enumerate}


\subsubsection{Legendrian links $\La_\beta$} Example \ref{ex:Hopf_link} generalizes to Legendrian links of the form $\La=\La_\beta$, $\beta$ a positive braid in $n$-strands. By using Equation (\ref{eq:multipl_basepoints_component}), we may assume that the set $\tt$ of basepoints has exactly one basepoint per component. Set $G:=\GL_1^{|\tt|}$, $H:=\GL_1$ and identify $H$ with the diagonal subgroup $\Delta(H)\sse G$. By \cite[Section 4.1]{CW}, see also \cite[Corollary 3.7]{CGGLSS}, there exists a smooth affine variety $X_\beta$ such that
$$\M_1({\La_\beta,\tt}) \cong [X_\beta/H]\cong X_\beta\times BH.$$
where $H$ acts trivially on $X_\beta$. That said, as illustrated in Example \ref{ex:Hopf_link}, the $(G/H)$-action on $X_\beta$ is not free and the quotient $\M_1({\La_\beta})$ typically has non-trivial stabilizers (and different from $H$) at some points. As in Example \ref{ex:Leg_knots}, $X_\beta$ is closely related to a braid variety: specifically, per Equation (\ref{eq:multipl_basepoints_component}), the braid variety $X(\beta)$ is isomorphic to $X_{\beta\delta(\beta)}\times \GL_1^{n-|\tt|}$, cf.~\cite[Section 4.1]{CW} or \cite[Proposition 5.8]{CasalsLi}. An explicit description of the basepoint action follows from \cite[Section 2.2]{CGGS}, cf.~also \cite[Section 5]{CasalsNg} from the perspective of Legendrian contact dg algebra.\hfill$\Box$



\section{Kashiwara-Schapira stacks and corestriction functors}\label{sec:corestriction} This section starts with Subsection \ref{ssec:KS_stack}, which reviews the Kashiwara-Schapira stacks. It includes the necessary details and properties about these stacks used to prove Theorem \ref{thm:main}. This first subsection also contains examples and some discussions relating our definitions to previous appearances of this stack in the literature. Subsections \ref{ssec:corestriction} and \ref{ssec:KS_stack_corestriction_relative} study corestriction functors for Kashiwara-Schapira stacks, a key ingredient in our proof of Theorem \ref{thm:main}. Subsection \ref{ssec:Lagrangian_fillings_KS_stack} relates Lagrangian fillings and Kashiwara-Schapira stacks: this allows us to relate microlocal merodromies with actual parallel transports of local systems, which is used in Subsection \ref{ssec:HH0_near_rel_cycle} as part of the argument for Theorem \ref{thm:main}.

\subsection{The Kashiwara-Schapira stack}\label{ssec:KS_stack}
The category $\Sh_\La(\R^2)$ is the global sections of a sheaf of dg-categories, known as the Kashiwara-Schapira stack. This subsection reviews such stack and its properties. Let $X$ be a Weinstein sector with subanalytic Lagrangian skeleton $\bL$. In this work, the expression {\it sheaves of dg-categories} precisely means $\infty$-sheaves, cf.~\cite[Def.~7.3.3.1]{HTT}, valued in the $\infty$-category $\mbox{dg-Cat}_k$ of well generated dg-categories, cf.~\cite{Tabuada_Wellgenerated} (we will see later in the proof of Theorem \ref{prop:corestriction} that these sheaves on Lagrangian skeleta are in fact valued in the $\infty$-category of compactly generated dg-categories).

Following \cite[Section 6]{KashiwaraSchapira_Book}, we will now consider a certain $\infty$-sheaf of dg-categories
$$\msh_{\bL}: \op(\bL)^{op} \lr \mbox{dg-Cat}_k,$$
often referred to as the Kashiwara--Schapira stack, cf.~ \cite[Section 10.1]{Guillermou23_SheafSummary}. (It is also known as microlocal sheaves, cf.~ \cite[Section 3.4]{Nadler16} or \cite[Section 6]{NadlerShende}.) We work with dg-categories primarily so that we can use the derived moduli stack from \cite{ToenVaquie07}. In addition, we use dg-categories as sheaf coefficients because triangulated (derived) categories do not typically satisfy descent. The $\infty$-sheaf $\msh_{\bL}$ is defined as follows.

First, we define an $\infty$-sheaf of dg-categories in the cotangent bundle $T^*M$ of a smooth manifold $M$. By definition, a subset $\widehat\Lambda \subset T^*M$ is conic if it is invariant under the following scaling $\bR_+$-action: $\bR_+\times T^*M \to T^*M, (s;x, \xi) \to (x, s\xi)$. A typical example of $\widehat\La$ will be the union of the Lagrangian cone over a Legendrian submanifold $\La\sse T^{*}_{\infty}M$ and the zero section $M\sse T^*M$. In particular, $\widehat\La$ might be singular.

\begin{remark}
We consider the conic topology on $T^*M$, generated by all conic open sets. We define the sheaf of categories using the conic topology, which is equivalent to an $\bR_+$-equivariant sheaf of categories on the standard (Euclidean) manifold topology. Indeed, for any open subset in the manifold topology whose intersection with all $\bR_+$-orbits are contractible, we declare sections on it to be the sections on the conification. Since such open subsets form a topological basis, we then obtain an $\bR_+$-equivariant sheaf on the manifold topology.\hfill$\Box$
\end{remark}

We denote by $\op(T^*M)$ the category of open conic subsets on $T^*M$, with morphisms given by inclusions of the open conic subsets.

\begin{definition}\label{def:KS_stack}
Let $\widehat\Lambda \subset T^{*}M$ be a conic subset. The $\infty$-presheaf $\msh_{\widehat\Lambda}^\text{pre}:\op(T^*M)^{op} \lr \mbox{dg-Cat}_k$ of dg-categories on $T^{*}M$ associated to $\widehat\Lambda$ is
$$\msh_{\widehat\Lambda}^\text{pre}(\widehat\Omega) := \Sh_{\widehat\Lambda \cup (T^{*}M \backslash \widehat\Omega)}(M) /\Sh_{T^{*}M \backslash \widehat\Omega}(M),\quad \widehat\Omega \subset T^{*}M\mbox{ open conic subset}.$$
By definition, the $\infty$-sheaf $\msh_{\widehat\Lambda}:\op(T^*M)^{op}\lr\mbox{dg-Cat}_k$ of dg-categories on $T^{*}M$ is the $\infty$-sheafification of the $\infty$-presheaf $\msh_{\widehat\Lambda}^\text{pre}$. We refer to the $\infty$-sheaf $\msh_{\widehat\Lambda}$ as the Kashiwara-Schapira stack of $\widehat\La$.
\hfill$\Box$
\end{definition}

Note that for any (conic) open set $\widehat\Omega\sse T^*M$ with $\widehat\Omega \cap \widehat\Lambda = \emptyset$ we have a dg-equivalence $\msh_{\widehat\Lambda}^\text{pre}(\widehat\Omega) \cong 0$, where the right hand side is understood as the one-object category with the zero vector space as its endomorphisms. Therefore $\msh_{\widehat\Lambda}^\text{pre}$ is supported on $\widehat\Lambda$. Hence, $\msh_{\widehat\Lambda}$ is also supported on $\widehat\Lambda$.  We often abuse notation and still denote the restriction of this sheaf to $\widehat\Lambda$ by $\msh_{\widehat\Lambda}:\op(\widehat\Lambda)^{op}\lr\mbox{dg-Cat}_k$. 

The conical aspect can be effectively ignored (the only additional choices lie in the zero section) by restricting to the ideal contact boundary of $T^*M$ (or a cosphere bundle):

\begin{definition}\label{def:microlocalization_functor}
\noindent Let $\Lambda \subset T^{*}_{\infty}M$ be a subset. By definition, the $\infty$-presheaf of categories $\msh_{\Lambda}^\text{pre}:\op(\La)^{op}\lr\mbox{dg-Cat}_k$ is the restriction of $\msh_{\Lambda \times \bR_+}^\text{pre}$ to $\La$. Similarly, the $\infty$-sheaf $\msh_{\Lambda}:\op(\La)^{op}\lr\mbox{dg-Cat}_k$ is the restriction of $\msh_{\Lambda \times \bR_+}$ to $\La$. Equivalently, the $\infty$-presheaf $\msh_{\Lambda}^\text{pre}$ is defined by
$$\msh_{\Lambda}^\text{pre}(\Omega) := \Sh_{\Lambda \cup (T^{*}_{\infty}M \backslash \Omega)}(M) /\Sh_{T^{*}_{\infty}M \backslash \Omega}(M),\quad \Omega \subset T^{*}_{\infty}M\mbox{ open
subset}.$$
By definition, the microlocalization functor
$$m_\Lambda: \Sh_\Lambda(M) \lr \msh_\Lambda(\Lambda)$$
is the functor induced by the natural quotient functor on the $\infty$-presheaf of categories:
\begin{equation*}
\hspace{42pt}\Sh_\Lambda(M) \hookrightarrow \Sh_{\Lambda \cup (T^{*}_{\infty}M \backslash \Omega)}(M) \to \Sh_{\Lambda \cup (T^{*}_{\infty}M \backslash \Omega)}(M) /\Sh_{T^{*}_{\infty}M \backslash \Omega}(M)=\msh_\Lambda^\text{pre}(\Omega). \hspace{37pt}\Box
\end{equation*}
\end{definition}

\begin{remark}
If we considered them as $\infty$-sheaves on $T^{*}_{\infty}M$, instead of $\La$, the $\infty$-presheaf $\msh_\Lambda^\text{pre}$ would be supported on $\Lambda$ and the $\infty$-sheaf on $\Lambda$ would coincide with the $\infty$-sheafification of the restriction of that $\infty$-presheaf $\msh_\Lambda^\text{pre}$ to $\Lambda$.\hfill$\Box$
\end{remark}

\subsubsection{Comments and examples related to the Kashiwara-Schapira stack} The following connects the content introduced above to key parts of the existing literature on Kashiwara-Schapira stacks.

\begin{remark}\label{rem:microsheaf-different-definition} The literature contains alternative descriptions for such sheaves $\msh_\La$, see e.g.~\cite[Section 10]{Guillermou23_SheafSummary} or \cite[Section 3.4]{Nadler16}. For comparison to the former (defined for smooth Legendrians), consider $\Lambda \subset T^{*}_{\infty}M$ and an open subset $\Lambda_0 \subset \Lambda$. Using the notation $\Sh_{(\Lambda_0)}(M) = \bigcup_{\Lambda_0 \subset \Omega}\Sh_{\Lambda_0 \cup T^{*}_{\infty}M \backslash \Omega}(M)$ from \cite[Section 10]{Guillermou23_SheafSummary}, we can show by \cite[Lemma 10.2.2]{Guillermou23_SheafSummary} that
\begin{align*}
\msh_\Lambda^\text{pre}(\Lambda_0) = \operatorname{colim}_{\Lambda_0\subset\Omega}\Sh_{\Lambda_0 \cup T^{*}_{\infty}M \backslash \Omega}(M)/\Sh_{T^{*}_{\infty}M \backslash \Omega}(M) \cong \Sh_{(\Lambda_0)}(M)/\Sh_{T^{*}_{\infty}M \backslash \Lambda_0}(M),
\end{align*}
which shows that our definition agrees with \cite[Def.~10.1.1]{Guillermou23_SheafSummary}. It can be verified that the morphism in $\msh_\Lambda$ can be computed by the sheaf $\mu hom$ in $T^*M$: this follows from \cite[Prop.~6.1.2]{KashiwaraSchapira_Book} or \cite[Cor.~10.1.5]{Guillermou23_SheafSummary}; this proves that the stalks of the two sheaves coincide.

\noindent The description in \cite[Section 3.4]{Nadler16} is as follows. Consider pairs $(B,\Omega)$ of a (small) open ball $B \subset M$ and a (small) open subset $\Omega \subset T^{*}_{\infty}B$ such that $\Omega$ is a neighbourhood of a component of $\Lambda \cap T^{*}_{\infty}B$. Then \cite[Section 3.4]{Nadler16} defines
$$\msh_\Lambda^\text{pre}(\Omega) = \Sh_{\Lambda \cup (T^{*}_{\infty}B \backslash \Omega)}(B) / \Sh_{T^{*}_{\infty}B \backslash \Omega}(B).$$
Since the restriction $r^*: \Sh(M) \to \Sh(B)$ admits left adjoint $r_!$ and right adjoint $r_*$, and the inclusion $\Sh_{\Lambda \cup (T^{*}_{\infty}M \backslash \Omega)}(M) \hookrightarrow \Sh(M)$ admits left and a right adjoints by \cite[Thm.~1.2]{Kuo}, we know that the restriction on the full subcategories $r^*: \Sh_{\Lambda \cup (T^{*}_{\infty}M \backslash \Omega)}(M) \to \Sh_{\Lambda \cup (T^{*}_{\infty}B \backslash \Omega)}(B)$ also admits left and right adjoints. Using the formula of the left adjoint functor $r^{*\ell}$ in \cite[Thm.~1.2]{Kuo}, one can show that, given $\SF \in \Sh_{\Lambda \cup (T^{*}_{\infty}M \backslash \Omega)}(M)$, the map $\SF \to r^{*\ell}r^*\SF$ is an isomorphism in $\Omega$ (i.e.~the mapping cone has singular support in $T^{*}_{\infty}M \setminus \Omega$), and, similarly, given $\SF_B \in \Sh_{\Lambda \cup (T^{*}_{\infty}B \backslash \Omega)}(B)$, the map $r^*r^{*\ell}\SF_B \to \SF_B$ is also an isomorphism in $\Omega$. Thus we obtain the equivalence between the definition in \cite[Section 3.4]{Nadler16} and Definition \ref{def:microlocalization_functor} above.\hfill$\Box$
\end{remark}

If the projection $\pi: \Lambda \to M$ is finite-to-one, we can connect our definition with \cite[Section 3.4]{Nadler16} and \cite[Section 3.9--11]{JinTreumann}:

\begin{lemma}\label{rem:microsheaf-different-definition2}
For $\Lambda \subset T^*_\infty M$, if the projection $\pi: \Lambda \to M$ is finite-to-one, then for any sufficiently small open ball $B \subset M$ and $\Omega \subset T^*_\infty B$,
$$\msh_\Lambda^\text{pre}(\Omega) \cong \Sh_\Lambda(B) / \Loc(B).$$
\end{lemma}
\begin{proof}
Per Remark \ref{rem:microsheaf-different-definition}, it suffices to show that $\Sh_{\Lambda \cup (T^*_\infty B \setminus \Omega)}(B)/\Sh_{T^*_\infty B \setminus \Omega}(B) \cong \Sh_\Lambda(B) / \Loc(B)$. This is an application of the refined microlocal cut-off lemma, cf.~ \cite[Prop.~6.1.4]{KashiwaraSchapira_Book} and \cite[Lem.~10.2.5]{Guillermou23_SheafSummary}: The inclusion $\iota_*: \Sh_\Lambda(B) \hookrightarrow \Sh_{\Lambda \cup (T^{*}_{\infty}B \backslash \Omega)}(B)$ admits a left adjoint $\iota^*$ called the (refined) microlocal cut-off functor. Given $\SF_B' \in \Sh_\Lambda(B)$, the map $\iota^*\iota_*\SF_B' \to \SF_B'$ is always an isomorphism, and given $\SF_B \in \Sh_{\Lambda \cup (T^{*}_{\infty}B \backslash \Omega)}(B)$, by the refined microlocal cut-off lemma, $\SF_B \to \iota_*\iota^*\SF_B$ is also an isomorphism on $\Omega$. This completes the proof. (See also \cite[Lem.~4.28 \& Cor.~4.32]{KuoLi-spherical}.)
\end{proof}

\begin{lemma}\label{rem:microsheaf-different-definition3}
For $\Lambda \subset T^*_\infty M$, if the projection $\pi: \Lambda \to M$ is finite-to-one, then for any sufficiently small open ball $B \subset M$, $\Omega \subset T^*_\infty B$,
$$\msh_\Lambda^\text{pre}(\Omega) \cong \msh_\Lambda^\text{pre}(\Omega) \cong \Sh_\Lambda(B) / \Loc(B).$$
\end{lemma}
\begin{proof}
We show that for sufficiently small open balls $B' \subset B$, the restriction functor
$$r^*: \Sh_{\Lambda}(B)/\Loc(B) \lr \Sh_{\Lambda}(B')/\Loc(B')$$
induces an equivalence. Then, by Lemma \ref{rem:microsheaf-different-definition2}, it follows that for $\Omega' \subset \Omega$, the restriction functor $\msh_\Lambda^\text{pre}(\Omega) \lr \msh_\Lambda^\text{pre}(\Omega')$ also induces an equivalence. Hence for the $\infty$-sheafification we also have
$$\msh_\Lambda^\text{pre}(\Omega) \cong \msh_\Lambda^\text{pre}(\Omega) \cong \Sh_\Lambda(B) / \Loc(B).$$
For the restriction $r^*: \Sh_{\Lambda}(B)/\Loc(B) \lr \Sh_{\Lambda}(B')/\Loc(B')$, it is fully faithful by the constructibility of the internal Hom in $\Sh_\Lambda(B)$ when $\Lambda \subset T^{*}_{\infty}M$ is subanalytic Legendrian \cite[Prop.~8.4.6]{KashiwaraSchapira_Book}, which implies local constancy of the Hom when we restrict to smaller and smaller neighbourhoods around a point \cite[Lem.~8.4.7]{KashiwaraSchapira_Book}. Since the front projection of $\Lambda$ defines a Whitney stratification, such that the inclusions of the strata in $B'$ to $B$ are homotopy equivalences \cite[Sec.~7 \& 8]{Mather}. Hence any constructible sheaf in $\Sh_\Lambda(B')/\Loc(B')$ extends to a constructible sheaf in $B$ which is contained in $\Sh_\Lambda(B)/\Loc(B)$ by checking the condition \cite[Cor.~4.23]{GPS3}. This shows the restriction $r^*$ is essential surjective. (See also \cite[Lem.~4.28 \& Cor.~4.32]{KuoLi-spherical}.)
\end{proof}

\begin{remark}
In fact, in \cite[Section 3.4]{Nadler16}, the sheaf of categories is directly defined as a constructible sheaf of categories such that
$$\msh_\Lambda(\Omega) = \Sh_{\Lambda \cup (T^{*}_{\infty}B \backslash \Omega)}(B) / \Sh_{T^{*}_{\infty}B \backslash \Omega}(B).$$
and it is mentioned that when the projection $\pi: \Lambda \to M$ is finite-to-one, one can furthermore write
$$\msh_\Lambda(\Omega) = \Sh_{\Lambda \cup (T^{*}_{\infty}B \backslash \Omega)}(B) / \Sh_{T^{*}_{\infty}B \backslash \Omega}(B) \cong \Sh_\Lambda(B) / \Loc(B).$$
The above lemma shows that our definition agrees with the description there. \qed
\end{remark}

\noindent Here are examples that illustrate properties of the $\msh$ stack. We start with a definition.

\begin{definition}\label{def:K_skeleton} Let $K\sse M$ be a finite set of $n$ positively co-oriented points on a 1-dimensional smooth manifold $M$. By definition, $\mathbb{K}_n(M):= M\cup \nu(K)\sse (T^* M,\la_\st)$ is the 1-dimensional (arboreal) Lagrangian skeleton given by union of the zero section $M\sse T^*M$ and the $n$ positive conormal rays to the points in $K$.\hfill$\Box$
\end{definition}

This Lagrangian skeleton $\bK_n$ corresponds to the case that $\La\sse T^{*}_{\infty} M$ is a set of $n$ points, and $\widehat\La=\bK_n(M)$ is given by the union of the (Lagrangian) cone over $\La$ union the zero section $M\sse T^*M$. We use the cases $M=S^1$ and $M=[0,1]$ in Subsection \ref{ssec:HH0_near_rel_cycle}, cf.~Lemmas \ref{lem:HH_near_cycle} and \ref{lem:HH_near_cycle_pointed}.

\begin{example}\label{ex:K_skeleton}
The dg-category $\msh_{\mathbb{K}_n(M)}({\mathbb{K}_n}(M))$ is equivalent to $\Sh^c_{\mathbb{K}_n}(M)$. It is computed in \cite[Theorem 1.8]{Nadler17}. For instance, for $M=S^1$, loc.~cit.~shows that $\Sh^c_{\mathbb{K}_n}(S^1)$ is equivalent to the dg-category of representations of the path algebra $k\langle Q_K\rangle$ of the cyclic quiver $Q_K$ with vertices $\{v_1,\ldots,v_n\}$. We use this fact in the proof of Lemma \ref{lem:HH_near_cycle}.\hfill$\Box$
\end{example}

\begin{example}\label{ex:ks-stack-local-system}
Let $\Lambda \subset T^{*}_{\infty}\bR^2$ be a Legendrian link with zero Maslov class and choose $\widehat\La$ to be the Lagrangian cone over $\La$. The results of \cite[Part 10]{Guillermou23_SheafSummary} imply that,
for any open subset $\Omega \subset T^{*}_{\infty}\bR^2$,
$$\msh_{\widehat\Lambda}(\Omega) \cong \Loc(\Lambda \cap \Omega).$$
In particular, since $\msh_\Lambda$ is supported on $\Lambda$, the global sections of the Kashiwara--Schapira stack are
$$\msh_{\Lambda}(T^{*}_{\infty}\bR^2) = \msh_{\Lambda}(\Lambda) \cong \Loc(\Lambda).$$
Note that these global sections depend only on (the homotopy type of) $\La$, and not on its Legendrian embedding into $T^{*}_{\infty}\bR^2$.
\hfill$\Box$
\end{example}

\begin{example}
Let $\Lambda \subset T^{*}_{\infty}\bR^2$ be a Legendrian link and choose $\widehat\Lambda \subset T^*\bR^2$ to be the union of the Lagrangian cone over $\La$ and all the bounded regions of $\bR^2 \setminus \pi(\Lambda)$, where $\pi:T^*M\lr M$ is the natural projection. For any $U \subset \bR^2$, we have
$\msh_{\widehat\Lambda}^\text{pre}(T^*U) = \Sh_{\widehat\Lambda }(U)$. Now, $\Sh_{\widehat\Lambda}$ forms a sheaf of categories and the universal property of sheafification implies that $\msh_{\widehat\Lambda}(T^*U) = \Sh_{\widehat\Lambda }(U)$. $($See also \cite[Appendix B]{JinTreumann}.$)$ 
In particular, since $\msh_{\widehat\Lambda}$ is supported on $\widehat\Lambda$, global sections of the Kashiwara--Schapira stack are
$$\msh_{\widehat\Lambda}(T^*\bR^2) = \msh_{\widehat\Lambda}(\widehat\Lambda) = \Sh_{\widehat\Lambda}(\bR^2) = \Sh_{\Lambda}(\bR^2)_0,$$
where the subscript $0$ denotes the full subcategory of sheaves with bounded support in $\bR^2$.\hfill$\Box$
\end{example}

For general Weinstein manifolds or Weinstein pairs\footnote{A Weinstein manifold with the data of a Weinstein hypersurface at the ideal contact boundary.}, we can deform the Weinstein structure to obtain a subanalytic Lagrangian skeleton onto which the manifold (or pair) retracts via the inverse Liouville flow, cf.~ \cite[Cor.~7.27]{GPS3}. For the arboreal skeleta introduced in \cite{Nadler17,Starkston}, one can define microlocal sheaves locally and glue the local categories together to define the microlocal sheaf category associated to the Weinstein manifold, cf.~ \cite{Nadler15Nonchar,Nadler17}. This generalizes the Kashiwara-Schapira stack. By \cite{Starkston}, all Weinstein 4-manifolds admit arboreal skeleta. More generally, the microlocal sheaf category can be defined for any subanalytic Lagrangian skeleton of a Weinstein pair endowed with a (stable) polarization, i.e.~a Lagrangian fibration in the tangent bundle.\footnote{We work in this general setting to state Theorem \ref{thm:invariance} below. Weinstein pairs with arboreal singularities are endowed with a natural polarization and are special cases of this general setting.}

\subsubsection{Two properties of the Kashiwara-Schapira stack} We need the following two results on the $\infty$-sheaf $\msh$: Theorem \ref{thm:invariance} and Proposition \ref{prop:stabilization}. The former states invariance of the stack under non-characteristic deformations of the Lagrangian subsets, see e.g.~ \cite{Nadler15Nonchar,NadlerShende,ZhouSheaf}. Such deformations include Liouville homotopies of Weinstein manifolds or pairs, see e.g.~ 
\cite[Thm.~9.14]{NadlerShende}. (Confer also \cite[Thm.~1.1 \& Rem.~1.5]{LiNote} or, alternatively, it follows from \cite{GPS3}.)

\begin{thm}[{\cite[Thm.~9.14]{NadlerShende} or \cite[Thm.~1.1]{LiNote}}]\label{thm:invariance}
Let $\bL$ and $\bL'$ be subanalytic Lagrangian skeleta of Weinstein pairs $(X, F, \lambda)$ and $(X, F, \lambda')$ endowed with a polarization. Suppose the pairs $(X, F, \lambda)$ and $(X, F, \lambda')$ are Liouville homotopic. Then there is a quasi-equivalence
$$\msh_{\bL}(\bL) \cong \msh_{\bL'}(\bL').$$
\end{thm}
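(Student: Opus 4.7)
The plan is to reduce the statement to an infinitesimal invariance property of $\msh$ under non-characteristic deformations of the skeleton, established via the Guillermou-Kashiwara-Schapira sheaf quantization. First, I would fix a Liouville homotopy $\{\lambda_t\}_{t \in [0,1]}$ with $\lambda_0 = \lambda$ and $\lambda_1 = \lambda'$, together with a continuous family of subanalytic Lagrangian skeleta $\bL_t$ of $(X, F, \lambda_t)$, obtainable after a small generic perturbation. Since $[0,1]$ is compact, it suffices to prove that for every $t_0 \in [0,1]$ there is a neighborhood $U \ni t_0$ over which the categories $\msh_{\bL_t}(\bL_t)$ are canonically quasi-equivalent to $\msh_{\bL_{t_0}}(\bL_{t_0})$, and then concatenate these equivalences along a finite cover of $[0,1]$.

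For the local step, I would use that the family of induced boundary data at infinity defines a contact isotopy of the ideal contact boundary relative to the Weinstein hypersurface $F$. The GKS theorem applied to this relative contact isotopy produces an autoequivalence of $\Sh(X)$ sending sheaves microsupported in the conical lift of $\bL_{t_0}$ to sheaves microsupported in the conical lift of $\bL_t$. This autoequivalence descends through the quotient in Definition \ref{def:KS_stack}, and through the $\infty$-sheafification defining $\msh$, to the desired quasi-equivalence on the skeleta. Since $\msh$ is an $\infty$-sheaf, the verification may be performed locally on charts compatible with the polarization and then glued by descent, which reduces the problem to a local model computation on pieces of the arboreal (or more general polarized) stratification.

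The polarization enters in an essential way because the global equivalence of $\msh$-categories under Liouville homotopy is only canonical up to a twist by a Maslov-type gerbe; the choice of a polarization trivializes this gerbe consistently along the family $\bL_t$. Concretely, after applying GKS, one must identify the local microlocal sheaf models on $\bL_{t_0}$ and $\bL_t$, and the polarization provides the matching brane/grading data that makes the equivalence coherent as $t$ varies.

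The main obstacle is controlling the behavior at infinity in the non-compact Weinstein pair setting: the Liouville homotopy need not be compactly supported, so the GKS quantization must be performed in the relative (wrapped) framework, with wrappings adapted to the Weinstein hypersurface $F$. Handling this requires either the non-characteristic deformation results of \cite{Nadler15Nonchar,NadlerShende,ZhouSheaf}, which directly propagate invariance across the family of skeleta without globally lifting the isotopy, or passing through the sheaf-Fukaya equivalence and invoking the relative Liouville-homotopy invariance of wrapped Fukaya categories established in \cite{GPS3}. Once either input is in hand, the local-to-global argument via the sheaf property of $\msh$ and the concatenation over $[0,1]$ yields the stated quasi-equivalence.
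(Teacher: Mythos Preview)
The paper does not supply its own proof of this theorem: it is quoted verbatim as a result from \cite[Thm.~9.14]{NadlerShende} and \cite[Thm.~1.1]{LiNote}, with no argument given beyond the citation. So there is no in-paper proof to compare against; what can be assessed is whether your sketch matches the strategy of those references.

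Your high-level plan (compactness of $[0,1]$, reduce to a small deformation, quantize a contact isotopy \`a la GKS, glue via the sheaf property of $\msh$, use the polarization to trivialize the Maslov obstruction) is indeed the skeleton of the argument in the cited sources. However, there is a genuine gap in how you invoke GKS. You write that the contact isotopy at infinity yields ``an autoequivalence of $\Sh(X)$'': but $X$ is the Weinstein manifold itself, not the base of a cotangent bundle, and $\Sh(X)$ in the sense used throughout this paper has no relation to the Kashiwara--Schapira stack $\msh_{\bL}$. GKS quantization acts on $\Sh(M)$ for a manifold $M$ with the Legendrian living in $T^*_\infty M$; to apply it here one must first realize $(X,\bL)$ inside some cotangent bundle. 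This is exactly where the polarization does its real work in \cite{NadlerShende}: it provides a (stable) Lagrangian distribution that allows a high-codimension embedding of the Weinstein pair into a cotangent bundle (or contact sphere), after which the skeleton becomes a conic Lagrangian and GKS applies in that ambient space. Your sketch treats the polarization only as a bookkeeping device for gradings, missing its structural role in making the quantization step well-posed. Once that embedding step is inserted, the rest of your outline is essentially what \cite{NadlerShende} does; the alternative route you mention via \cite{GPS3} is also valid and is noted as such in the paper.
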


\begin{example}
$(i)$ Let $f \in \bC[x, y]$ be an isolated plane curve singularity and $\Lambda_f \subset (\bR^3, \xi_{st}) \subset (\bS^3, \xi_{st})$ be the associated Legendrian link, whose positive transverse push-off is the link of the singularity $f$. By  \cite[Theorem 1.1]{CasalsLagSkel}, a choice of real Morsification $\tilde{f}$ of $f$ yields an arboreal skeleton $\bL_{\tilde{f}}$ for the Weinstein pair $(\bC^2, \Lambda_f)$. The skeleton $\bL_{\tilde{f}}$ is given by the union of an exact Lagrangian filling $L_{\tilde{f}}$ of $\Lambda_f$ (smoothly given by the Milnor fiber of $f$) endowed with an $\bL$-compressing system $\D$ whose Lagrangian disks are $\bD^*$ (given by the Lagrangian vanishing thimbles of the Morsification). Therefore, we have
$$\Sh_{\Lambda_f}(\bR^2)_0 \cong \msh_{\bL_{\tilde{f}}}(\bL_{\tilde{f}}),$$
where the subscript $0$ denotes the full subcategory of sheaves with bounded supports in $\bR^2$. Different choices of real Morsifications lead to different Lagrangian skeleta. Theorem \ref{thm:invariance} implies that the global sections of the Kashiwara-Schapira stack on these skeleta are all equivalent: they are quasi-equivalent to $\Sh_{\Lambda_f}(\bR^2)_0$.

\noindent $(ii)$ More generally, let $\La_\beta\sse(\R^3,\xi_\st)$ be the $(-1)$-closure of a positive braid $\beta$, cf.~\cite[Section 2]{CasalsNg}. Suppose that the Lusztig cycles $\D_{\ww}$ of a weave $\ww$ for $\beta\delta(\beta)$ provide a complete $\bL$-compressing system with Lagrangian disks $\D^*_\ww$, e.g.~if $\beta$ contains a $w_0$ subword, and let $L_\ww$ be the filling of $\La_\beta$ associated to $\ww$. See \cite[Section 2]{CasalsZaslow}, \cite[Sections 2\&3]{CW} and \cite[Section 4]{CGGLSS} for details on weaves and $\L$-compressing systems. Then the union $\bL_\ww:=L_\ww\cup\D_\ww^*$ is a relative arboreal Lagrangian skeleton for $(\R^4,\La)$. In this case
$$\Sh_{\Lambda_\beta}(\bR^2)_0 \cong \msh_{\bL_\ww}(\bL_\ww).$$
Note that there are many choices for $\ww$ given a $\beta$. Typically infinitely many if one considers $\beta$ cyclically. In either case, there are many choices of Lagrangian skeleta $\L_\ww$ for a fixed such $\La_\beta$. By Theorem \ref{thm:invariance}, the global sections of their Kashiwara-Schapira stacks are all equivalent: they are quasi-equivalent to $\Sh_{\Lambda_\beta}(\bR^2)_0$, which is independent of $\ww$.
\hfill$\Box$
\end{example}

For the Kashiwara-Schapira stack $\msh_\L$, the existence of restrictions to any open subsets of $\L$ follows from definition. However, we also need to further restrict to certain (lower-dimensional) closed subsets of $\La$. We use the following stabilization formula, which is a special case of the K\"unneth formula for microlocal sheaves, cf.~ \cite{KuoLi-duality}. It can be proved using the invariance under contactomorphisms, cf.~ \cite[Thm.~7.2.1]{KashiwaraSchapira_Book}.

\begin{prop}[{\cite[Thm.~7.2.1]{KashiwaraSchapira_Book} or \cite[Lem.~6.2]{NadlerShende}}]\label{prop:stabilization}
Let $\bL$ be a subanalytic Lagrangian subset endowed with polarization. Then there is a quasi-equivalence
$$\msh_{\bL \times (-1, 1)}(\bL \times (-1, 1)) \cong \msh_\bL(\bL).$$
\end{prop}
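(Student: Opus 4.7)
The plan is to exhibit mutually inverse quasi-equivalences constructed from pullback and restriction along the projection $p: M \times (-1,1) \lr M$, where $\bL \subset T^*M$ (equivalently, by working locally, we may always replace the Weinstein ambient of $\bL$ by a cotangent bundle thanks to the polarization). The stabilized skeleton $\bL \times (-1,1)$ lives in $T^*(M \times (-1,1)) \cong T^*M \times T^*(-1,1)$ as $\bL \times 0_{T^*(-1,1)}$. First I would use the $\infty$-sheaf property of $\msh$, together with Theorem \ref{thm:invariance}, to reduce to a local assertion on a basis of conic open subsets of $\bL$, and then use descent along an open cover to glue local equivalences into a global one; this is essentially the Mayer--Vietoris pattern used in the references.

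For the comparison functor, I would observe that since $p$ is a smooth submersion, the pullback $p^*: \Sh(M) \lr \Sh(M \times (-1,1))$ satisfies $\SS(p^*\SF) = \SS(\SF) \times 0_{T^*(-1,1)}$; in particular it carries $\Sh_\La(M)$ into $\Sh_{\La\times 0}(M\times(-1,1))$ and similarly carries the ``complement'' subcategories compatibly. Passing to the quotients that define $\msh^{\text{pre}}$, and then sheafifying, $p^*$ induces a functor
\[
\Phi: \msh_\bL(\bL) \lr \msh_{\bL\times(-1,1)}(\bL\times(-1,1)).
\]
In the opposite direction, the inclusion $i: M\times\{0\} \hookrightarrow M\times(-1,1)$ is non-characteristic for any $\SG \in \Sh_{\La\times 0}(M\times(-1,1))$ (its conormal $T^*_{M\times\{0\}}(M\times(-1,1))$ meets $\La\times 0$ only in the zero section), so $i^*$ is well-defined on the relevant categories and descends to an inverse candidate $\Psi$.

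To conclude that $\Phi$ and $\Psi$ are mutually inverse, I would invoke the propagation / non-characteristic deformation theorem from \cite{KashiwaraSchapira_Book}: any sheaf with singular support contained in $\La \times 0_{T^*(-1,1)}$ is locally constant along the $(-1,1)$-factor, hence canonically isomorphic to $p^* i^* \SG$, and conversely $i^* p^* \SF \cong \SF$ tautologically. This gives an equivalence of the local pieces $\Sh_{\La\cup(T^*_\infty M \setminus \Omega)}(M)$ and $\Sh_{(\La\times 0)\cup(T^*_\infty(M\times(-1,1))\setminus(\Omega\times\mathbb{R}_{>0}\cdot T(-1,1)))}(M\times(-1,1))$ up to the ideals we quotient by in the definition of $\msh^{\text{pre}}$; by exactness of the Verdier quotients, the equivalence passes to the quotient categories and then, after sheafification, to $\msh$.

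The main obstacle I expect is the bookkeeping of the quotients: one must check that $\Phi$ sends $\Sh_{T^*_\infty M \setminus \Omega}(M)$ into the ``zero'' ideal $\Sh_{T^*_\infty(M\times(-1,1))\setminus \Omega'}(M\times(-1,1))$ for an appropriate conification $\Omega'$ of $\Omega \times (-1,1)$, and conversely that $\Psi$ sends the larger ideal back onto the smaller one without losing information. A technically cleaner route, and the one I would ultimately favour, is to deduce the statement from the general Künneth formula for microsheaves of \cite{KuoLi-duality}, applied to the product $(\bL, T^*M)\times (0_{T^*(-1,1)}, T^*(-1,1))$: the second factor contributes $\msh_{0_{(-1,1)}}(0_{(-1,1)})\cong\Loc((-1,1))\cong\Perf(k)$, which is the monoidal unit, so the Künneth product reduces to $\msh_\bL(\bL)$ as claimed.
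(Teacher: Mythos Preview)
The paper does not supply its own proof of this proposition: it is stated as a citation to \cite[Thm.~7.2.1]{KashiwaraSchapira_Book} and \cite[Lem.~6.2]{NadlerShende}, with the surrounding text remarking only that it is a special case of the K\"unneth formula for microlocal sheaves \cite{KuoLi-duality} and that it can be proved via invariance under contactomorphisms. Your proposal is correct, and your second route (deducing it from K\"unneth with the trivial factor $\Loc((-1,1))\cong\Perf(k)$) is exactly the argument the paper points to.

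Your first route, via $p^*$ and $i^*$ together with non-characteristic propagation, is also sound and is a slightly more hands-on variant of what the references do. One small clarification: the paper's hint about ``invariance under contactomorphisms'' refers to the quantized contact transformation machinery of \cite[Ch.~7]{KashiwaraSchapira_Book}, which identifies microlocal sheaf categories under ambient contactomorphisms; your $p^*$/$i^*$ argument bypasses that and works directly at the sheaf level, which is arguably more transparent here since the $(-1,1)$-direction has no microlocal content. The bookkeeping concern you raise about the Verdier quotients is real but routine: because $p^*$ and $i^*$ are exact and intertwine the singular-support conditions on the nose (the conormal to $M\times\{0\}$ is transverse to $\bL\times 0$), both functors carry the respective ideals into each other, and the induced functors on the quotients are inverse by the propagation argument you sketched. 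Either route is acceptable; the K\"unneth one is shorter once that reference is in hand.
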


\subsection{Corestriction functors for Kashiwara-Schapira stacks}\label{ssec:corestriction}

For two open subsets $\Omega', \Omega \subset \L$ with $\Omega' \subset \Omega$, we now show that the restriction functor $\rho^*: \msh_{\L}(\Omega) \lr \msh_{\L}(\Omega')$ admits both left and right adjoints, and consequently the left adjoint preserves compact objects
. The left adjoint is denoted by
$$\rho_{!}: \msh_{\L}(\Omega') \lr \msh_{\L}(\Omega)$$
and referred to as the {\it corestriction functor}. See also \cite[Section 3.6]{Nadler16}, \cite[Remark 5.1]{NadlerShende} or \cite[Section 3.2]{KuoLi-spherical}. More generally, we have restriction and corestriction functors for any open immersions $\Omega' \looparrowright \Omega$ (which are local homeomorphisms). The precise result we use reads as follows:

\begin{prop}\label{prop:corestriction}
Let $\L$ be a compact subanalytic Lagrangian subset, endowed with a polarization. Consider two open Lagrangian subsets $\Omega, \Omega'\sse \L$ such that $\Omega' \looparrowright \Omega$ and $\Omega \hookrightarrow \L$. Then the restriction functor
$$\rho^*: \msh_{\Omega}(\Omega) \lr \msh_{\Omega'}(\Omega')$$
admits a left adjoint $\rho_!$ and a right adjoint $\rho_*$. In particular, the left adjoint, which is called the corestriction functor, preserves compact objects
$$\rho_!: \msh_{\Omega'}(\Omega') \lr \msh_{\Omega}(\Omega).$$
\end{prop}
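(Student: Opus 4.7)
The plan is to establish both adjoints via the adjoint functor theorem in the presentable setting, and then deduce preservation of compact objects from the preservation of filtered colimits. I would organize the argument in three stages: first, verify presentability of $\msh_{\Omega}(\Omega)$ and $\msh_{\Omega'}(\Omega')$; second, show that $\rho^*$ preserves both small limits and small colimits; third, invoke the adjoint functor theorem.

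For presentability, I would work locally on $\bL$. On a small enough ball $B\sse \bL$, the local descriptions recalled in Remark \ref{rem:microsheaf-different-definition2} identify the local sections of $\msh_\bL$ with a quotient of the form $\Sh_{\bL}(B)/\Loc(B)$ in $\mbox{dg-Cat}_k$, where both $\Sh_{\bL}(B)$ and $\Loc(B)$ are compactly generated by \cite[Cor.~4.26]{GPS3} combined with the results of \cite{Nadler17,Starkston} on arboreal Lagrangian skeleta. Quotients of compactly generated categories in $\mbox{dg-Cat}_k$ remain compactly generated. Since $\bL$ is compact subanalytic, both $\Omega$ and $\Omega'$ admit finite covers by such pieces, and the sheaf condition then expresses each of $\msh_\Omega(\Omega)$ and $\msh_{\Omega'}(\Omega')$ as a finite limit of compactly generated dg-categories, which itself is compactly generated.

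For the second stage, I would reduce the open immersion case to the open embedding case, using that any local homeomorphism is locally a disjoint union of open embeddings and that the question of adjoints is local in the source. Restricting to a small ball $B$ with open sub-ball $B'$, the local model of $\rho^*$ is induced by the sheaf restriction $\Sh_{\bL}(B) \to \Sh_{\bL|_{B'}}(B')$, which admits both a left adjoint $j_!$ and a right adjoint $j_*$ as in \cite[Thm.~1.2]{Kuo}. Hence the local model has both adjoints and in particular preserves all limits and colimits. Globalization uses the sheaf property of $\msh_\bL$: limits are computed by descent, colimits by sheafification, and restriction commutes with both formations on a cover. This shows that $\rho^*$ itself preserves all small limits and all small colimits.

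With these facts in hand, the adjoint functor theorem furnishes both a right adjoint $\rho_*$ (from $\rho^*$ preserving colimits) and a left adjoint $\rho_!$ (from $\rho^*$ preserving limits and being accessible, the latter holding since it preserves all colimits). The preservation of compact objects by $\rho_!$ then follows because its right adjoint $\rho^*$ preserves filtered colimits. The main obstacle I anticipate lies in the second stage: verifying that the sheaf-theoretic adjoints $j_!$ and $j_*$ descend to the Kashiwara--Schapira quotient, i.e.~that they preserve the full subcategory of objects whose singular support avoids the prescribed open set. This requires non-characteristic deformation arguments in the spirit of \cite{Nadler15Nonchar, KashiwaraSchapira_Book} to control the singular supports of $j_!\SF$ and $j_*\SF$, together with a compatibility check ensuring that the local adjunctions glue coherently via the $\infty$-sheaf structure of $\msh_\bL$.
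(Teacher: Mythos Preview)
Your proposal is correct and follows essentially the same route as the paper: local models $\Sh_{\bL}(B)/\Loc(B)$, compact generation via \cite[Cor.~4.22]{GPS3}, both adjoints of $r^*$ locally, globalization through the sheaf condition, and the adjoint functor theorem. Your anticipated obstacle is not genuine: you need not descend $j_!$ or $j_*$ to the quotient, only that $r^*$ preserves limits and colimits on the full subcategories $\Sh_{\bL}(B)$ (which is automatic, since $r^*$ on all sheaves has both adjoints and these subcategories are closed under limits and colimits), after which the adjoint functor theorem manufactures the adjoints on the localization directly---the paper packages this by observing that $\msh_{\bL}$ takes values in $\mathrm{Pr}^\mathrm{R}_{\omega,\mathrm{dg},k}$ on a basis and that this $\infty$-category admits all small limits \cite[Prop.~5.5.7.6]{HTT}.
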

\begin{proof}
By genericity and invariance, it suffices to consider the case where the Legendrian embedding $\L \subset T^{*}_{\infty}M$ is such that the front projection $\pi: \L \lr M$ is finite-to-one. We claim that there exists a topological basis of $T^{*}_{\infty}M$ on which $\msh_{\L}$ takes values in $\mathrm{Pr}^\mathrm{R}_{\omega,\mathrm{dg},k}$, the $\infty$-category whose objects are compactly generated dg-categories and whose functors preserve limits and (filtered) colimits, cf.~ \cite[Def.~5.5.7.5]{HTT}. The claim is proven as follows. By Lemma \ref{rem:microsheaf-different-definition3}, when $\pi: \L \to M$ is finite-to-one, for any point in $T^*M$, there exists an open ball $B \subset M$ together with a union of open balls $\Omega \subset T^{*}_{\infty}B$ such that $\L \cap T^{*}_{\infty}B \subset \Omega$, and 
$$\msh_{\L}(\Omega) = \Sh_{\L \cup (T^{*}_{\infty}M \setminus \Omega)}(M) / \Sh_{T^{*}_{\infty}M \setminus \Omega}(M) = \Sh_{\L}(B) / \Loc(B).$$
First, \cite[Cor.~4.22]{GPS3} implies that these categories are compactly generated, so sections of $\msh_\L$ indeed are objects of $\mathrm{Pr}^\mathrm{R}_{\omega,\mathrm{dg},k}$. Specifically, these dg-categories are compactly generated by the corepresentatives of the microstalk functors. Second, consider the restriction functor for such open balls $\rho: \msh_{\L}(\Omega) \lr \msh_{\L}(\Omega')$: it is induced by restrictions of sheaves for $r: B' \hookrightarrow B$ via
$$r^*: \Sh_{\L}(B) / \Loc(B) \lr \Sh_{\L}(B') / \Loc(B').$$
Since the restriction functor on all sheaves $r^*: \Sh(B) \lr \Sh(B')$ admits left adjoint $r_!$ and right adjoint $r_*$, and the inclusion $\Sh_{\L}(B) \hookrightarrow \Sh(B)$ admits left and right adjoints by \cite[Thm.~1.2]{Kuo}, we know that on the full subcategories the restriction $r^*: \Sh_{\L}(B) \lr \Sh_{\L}(B')$ also admits left and right adjoints. Since the restriction also preserves local systems, the adjunction descends to localizations $r^*: \Sh_{\L}(B) / \Loc(B) \to  \Sh_{\L}(B') / \Loc(B')$. This shows that, for this basis, $\msh_\L$ sends morphisms to those in $\mathrm{Pr}^\mathrm{R}_{\omega,\mathrm{dg},k}$ and thus establishes the claim.

Therefore, the sections of the sheaf $\msh_{\L}$ on arbitrary open subsets take values in $\mathrm{Pr}^\mathrm{R}_{\omega,\mathrm{dg},k}$ since the category admits all small limits, cf.~\cite[Prop.~5.5.7.6]{HTT}. Hence, for arbitrary open embeddings $\Omega' \hookrightarrow \Omega \hookrightarrow \L$, the restriction functor $\rho: \msh_{\L}(\Omega) \lr \msh_{\L}(\Omega')$, being a morphism in $\mathrm{Pr}^\mathrm{R}_{\omega,\mathrm{dg},k}$, preserves limits and colimits and thus admits both left and right adjoints. The left adjoint $\rho_!: \msh_{\L}(\Omega') \lr \msh_{\L}(\Omega)$ now has a right adjoint which admits a further right adjoint, and hence preserves compact objects. Note that we have a natural equivalence $\msh_{\L}(\Omega) = \msh_\Omega(\Omega)$ if $\Omega\sse\L$, thus proving the result for open embeddings $\Omega' \hookrightarrow \Omega \hookrightarrow \L$.

Finally, for an open immersion (which is a local homeomorphism) $\Omega' \looparrowright \Omega \hookrightarrow \L$, we find an open cover of $\Omega'$ by $\{\Omega_i'\}_{i \in I}$ such that $\Omega_i' \hookrightarrow \Omega$. We have corestriction functors $({\rho_i})_!: \msh_{\Omega_i'}(\Omega_i') \lr \msh_\Omega(\Omega)$ preserving compact objects. By the universal property of global sections on $\Omega'$, we thus obtain a corestriction functor ${\rho}_!: \msh_{\Omega'}(\Omega') \lr \msh_\Omega(\Omega)$ that preserves compact objects.
\end{proof}

\begin{remark}
Via the equivalence between microlocal sheaves and partially wrapped Fukaya categories, established in \cite{GPS3}, we expect that in good situations, the corestriction functor for open inclusions of stratified Lagrangian subsets coincides with the pushfoward functor for proper inclusions of Weinstein sectors, cf.~\cite{GPS1}. In line with \cite[Theorem 1.2 \& Section 3.2]{Kuo} and \cite[Section 3.5]{GPS1}, we also expect that corestriction functors can be characterized by sheaf-theoretic wrappings, i.e.~we expect that applying each such corestriction functor can be expressed as a certain colimit of a diagram of sheaves indexed by positive Hamiltonian isotopies of the Weinstein sector. This would be the symplectic topological incarnation of the pointwise colimit formula for left Kan extensions, see e.g.~\cite[Prop.~4.3.3.10]{HTT}. \hfill$\Box$
\end{remark}


\subsection{Corestriction functors in the pointed setting}\label{ssec:KS_stack_corestriction_relative} Let $\L$ be a Lagrangian skeleton endowed with polarization and $\tt\sse\dd\L$ a set of points. In our case of interest, $\L=L\cup\D^*\sse T^*\R^2$ is the union of an exact Lagrangian filling $L\sse T^*\R^2$ of a Legendrian link $\Lambda=\dd\L\sse T^*_\infty\R^2$, $\D$ is an $\L$-compressing system for $L$ with Lagrangian disks $\D^*$ and $\tt\sse\La$ is a set of basepoints. In many cases the Lagrangian disks in the $\L$-compressing system $\D$ lie in the zero section of $T^*\R^2$.

We study the Kashiwara-Schapira stack supported in $(\L,\tt)$, cf.~ Definitions \ref{def:cat_decorated_sheaves} and \ref{def:stack_decorated_sheaves}, and we use the corestriction functors as in Proposition \ref{prop:corestriction} in this pointed setting, as follows.

\begin{definition}\label{def:cat_decorated_KS}
    Let $\L$ be a compact subanalytic Lagrangian subset, endowed with a polarization, with boundary $\dd \L$ and $\tt \sse \dd \L$ a set of basepoints. By definition, the dg-category $\msh_{\L,\tt}(\L, \tt)$ is the homotopy colimit of the diagram
    \begin{center}
        \begin{tikzcd} 
         \displaystyle\prod_{i=1}^{|\tt|}\modk\ar[r,"m_{\L,\tt}^\ell"] \ar[d,"\Delta^\ell" left] & \msh_\L(\L) \\
        \modk&
    \end{tikzcd}
    \end{center}
    where $m_{\L,\tt}^\ell: \Loc(\tt) \lr \msh_\L(\L)$ is left adjoint of the restriction functor or microstalk functor $m_{\L,\tt}: \msh_\L(\L) \lr \Loc(\tt)$.\hfill$\Box$
\end{definition}

\noindent If $\L \sse T^*\R^2$ is a compact Lagrangian skeleton in the cotangent bundle with boundary $\La = \dd\L \sse T^*_\infty \R^2$, Definition \ref{def:cat_decorated_KS} gives an equivalence
$$\msh_{\L,\tt}(\L, \tt) \cong \Sh_{\La,\tt}(\R^2)_0.$$
The existence of corestriction functors in this pointed setting reads as follows:

\begin{cor}\label{cor:corestriction_Lprime_L}
Let $\L$ be a Lagrangian skeleton with boundary $\dd\L$, endowed with a polarization,  $\Omega'\looparrowright\Omega$ and $\tt \sse \dd \Omega$, $\tt'\sse\dd\Omega'$ two sets of points such that $\tt'=\tt\cap\Omega'$. Then there exists a corestriction functor
$$\rho_!:\msh_{\Omega',\tt'}(\Omega',\tt')\lr\msh_{\Omega,\tt}(\Omega, \tt)$$
and it preserves compact objects.
\end{cor}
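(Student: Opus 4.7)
The plan is to deduce the existence of $\rho_!$ for the pointed Kashiwara--Schapira stacks from Proposition \ref{prop:corestriction} by invoking the universal property of the homotopy pushouts that define $\msh_{\Omega,\tt}(\Omega,\tt)$ and $\msh_{\Omega',\tt'}(\Omega',\tt')$ in Definition \ref{def:cat_decorated_KS}. The inputs are the ordinary corestriction $\rho_!\colon \msh_{\Omega'}(\Omega')\to \msh_\Omega(\Omega)$ from Proposition \ref{prop:corestriction}, and, on the local-system side, the zero-extension functor $i\colon \Loc(\tt')\to \Loc(\tt)$, which is the left adjoint of the natural projection $p\colon \Loc(\tt)\to \Loc(\tt')$ induced by $\tt'=\tt\cap\Omega'\hookrightarrow \tt$.

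The main step is to verify the two compatibilities needed to apply the pushout universal property. On the microstalk side, since $m_{\Omega,\tt}$ at a point $t\in\tt'$ depends only on the germ of the sheaf near $t\in\Omega'$, the restriction $\rho^*\colon \msh_\Omega(\Omega)\to \msh_{\Omega'}(\Omega')$ intertwines the microstalk functors with $p$, i.e.~$m_{\Omega',\tt'}\circ \rho^*\simeq p\circ m_{\Omega,\tt}$. Taking left adjoints of every arrow in this square yields
$$\rho_!\circ m^\ell_{\Omega',\tt'} \simeq m^\ell_{\Omega,\tt}\circ i.$$
On the diagonal side, $\Delta^\ell_\tt\circ i\simeq \Delta^\ell_{\tt'}$ holds because $\Delta^\ell$ is the coproduct and the entries indexed by $\tt\setminus \tt'$ in $i(V)$ are zero objects, so they contribute nothing. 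This second identity is a direct check; the first is the only step with geometric content, and I expect it to be the main (though essentially routine) obstacle, since it amounts to identifying the left adjoint of the microstalk with the appropriate corepresentative under corestriction.

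Granted these compatibilities, I construct the required functor by exhibiting a cone over the pushout diagram defining $\msh_{\Omega',\tt'}(\Omega',\tt')$ with vertex $\msh_{\Omega,\tt}(\Omega,\tt)$: on the sheaf vertex I use the composition $\msh_{\Omega'}(\Omega')\xrightarrow{\rho_!}\msh_\Omega(\Omega)\to \msh_{\Omega,\tt}(\Omega,\tt)$, and on the $\modk$ vertex the canonical structure map. The two resulting paths out of $\Loc(\tt')$ both factor through $i\colon \Loc(\tt')\to \Loc(\tt)$ by the compatibilities above, and they become identified in $\msh_{\Omega,\tt}(\Omega,\tt)$ via the pushout relation $m^\ell_{\Omega,\tt}\sim \Delta^\ell_\tt$. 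The universal property then yields the induced $\rho_!\colon \msh_{\Omega',\tt'}(\Omega',\tt')\to \msh_{\Omega,\tt}(\Omega,\tt)$.

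Finally, preservation of compact objects follows because the homotopy pushouts defining $\msh_{\Omega,\tt}(\Omega,\tt)$ and $\msh_{\Omega',\tt'}(\Omega',\tt')$ may be computed at the level of small dg-categories of compact objects: the structure functors $m^\ell_{\Omega,\tt}$, $m^\ell_{\Omega',\tt'}$, and $\Delta^\ell$ preserve compact objects by Proposition \ref{prop:corestriction} and direct inspection, and so do $\rho_!$ (again by Proposition \ref{prop:corestriction}) and $i$. Hence the induced functor on pushouts restricts to compact subcategories, completing the argument. No geometric input beyond Proposition \ref{prop:corestriction} is needed; the proof is formal once the intertwining between corestriction and microstalks is in place.
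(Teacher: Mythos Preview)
Your proposal is correct and follows essentially the same approach as the paper: both arguments build a morphism between the two homotopy pushout diagrams of Definition \ref{def:cat_decorated_KS} using the unpointed corestriction $\rho_!$ on the $\msh$ vertex, the inclusion/zero-extension $\Loc(\tt')\to\Loc(\tt)$ on the product-of-microstalks vertex, and the identity on the $\modk$ vertex, then invoke the universal property (equivalently, functoriality) of homotopy colimits. Your treatment is somewhat more explicit in deriving the compatibility $\rho_!\circ m^\ell_{\Omega',\tt'}\simeq m^\ell_{\Omega,\tt}\circ i$ by passing to left adjoints of the evident square for restriction and microstalks, whereas the paper simply asserts that $m^\ell_{\Omega,\tt}$ factors through $m^\ell_{\Omega',\tt'}$; but this is a difference in exposition, not in strategy.
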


\begin{proof}
By Proposition \ref{prop:corestriction}, there is a corestriction functor 
\begin{equation}\label{eq:corestriction_Lprime_L}
\rho_!:\msh_{\Omega'}(\Omega')\lr\msh_{\Omega}(\Omega).
\end{equation} 
Consider the three terms for the homotopy pushout in Definition \ref{def:cat_decorated_KS} for $\Omega'$ and $\Omega$, and let us specify a functor from each term for $\Omega'$ to each corresponding term for $\Omega$. The terms $\msh$, written as sheaf categories in Definition \ref{def:cat_decorated_KS}, map to each other via corestriction, as in (\ref{eq:corestriction_Lprime_L}) above. The terms with the product of the microstalks map to each other via the natural inclusion since $\tt'=\tt\cap\Omega'$ and the functor
$$m^\ell_{\Omega,\tt}:\prod_{i=1}^{|\tt|}\modk^c\lr\msh^c_{\Omega}(\Omega)$$
factors through
$$m^\ell_{\Omega',\tt'}:\prod_{i=1}^{|\tt'|}\modk^c\lr\msh^c_{\Omega'}(\Omega')$$
when we consider the geometric restriction from $\Omega$ to $\Omega'$. The terms with $\modk$ map to each other via the identity functor. Therefore each term in the homotopy pushout for $\Omega'$ maps, as just described, to the corresponding term in the homotopy pushout for $\Omega$. Since homotopy colimits are functorial, the result follows.\end{proof}

\subsection{Lagrangians fillings and Kashiwara-Schapira stacks}\label{ssec:Lagrangian_fillings_KS_stack} In order to relate microlocal merodromies to an actual parallel transport of a local system, we implicitly use Corollary \ref{cor:Lagrangian_fillings_KS_stack} stated below. We first prove Proposition \ref{prop:viterbo-restrict}, which we use to deduce Corollary \ref{cor:Lagrangian_fillings_KS_stack}.

Consider a closed subset $\L' \subset \L$. By Definition \ref{def:KS_stack}, defining the Kashiwara-Schapira stack, there is a natural inclusion $\iota_*: \msh_{\L'} \hookrightarrow \msh_{\L}$. In particular, taking global sections, we have a functor $\iota_*: \msh_{\L'}(\L') \hookrightarrow \msh_{\L}(\L)$. We now show that this inclusion functor admits left and right adjoints, and consequently the left adjoint preserves compact objects.

\begin{prop}\label{prop:viterbo-restrict}
Let $\L',\L$ be compact subanalytic Lagrangian subsets such that $\L' \subset \L$, and each is endowed with a polarization. Then the inclusion functor
$$\iota_*: \msh_{\L'}(\L') \hookrightarrow \msh_{\L}(\L)$$
admits a left adjoint $\iota^*$ and a right adjoint $\iota^!$. In addition, the left adjoint
$$\iota^*: \msh_{\L}(\L) \lr \msh_{\L'}(\L')$$
preserves compact objects.
\end{prop}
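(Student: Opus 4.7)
The plan is to mirror the argument of Proposition \ref{prop:corestriction}: establish that $\iota_*$ is a morphism between compactly generated dg-categories which preserves all small limits and colimits, then invoke the adjoint functor theorem in $\mathrm{Pr}^{\mathrm{R}}_{\omega,\mathrm{dg},k}$ to produce both adjoints, and finally use the resulting chain of adjunctions to deduce that the left adjoint preserves compact objects.

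First I would reduce, via Theorem \ref{thm:invariance}, to a situation where the Legendrian boundaries of $\L' \subset \L$ have finite-to-one front projections onto $M$, so that Remark \ref{rem:microsheaf-different-definition2} provides local descriptions
$$\msh_{\L}(\Omega) \cong \Sh_{\L}(B)/\Loc(B), \qquad \msh_{\L'}(\Omega) \cong \Sh_{\L'}(B)/\Loc(B)$$
over small open balls $B \subset M$. Under these identifications, the inclusion $\iota_*$ is induced by the tautological full subcategory inclusion $\Sh_{\L'}(B) \hookrightarrow \Sh_{\L}(B)$ inside $\Sh(B)$. Since the microsupport of a colimit is contained in the closure of the union of microsupports, and a dual statement controls limits via $\mu hom$, both $\Sh_{\L'}(B)$ and $\Sh_{\L}(B)$ are closed under small limits and small colimits in $\Sh(B)$; the inclusion between them therefore preserves all small limits and colimits.

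I would then descend to the localization by $\Loc(B)$. The key observation is that an object in $\Sh_{\L'}(B)$ is $\Loc(B)$-local if and only if its image in $\Sh_{\L}(B)$ is $\Loc(B)$-local, because $\Hom$-spaces are computed in the ambient $\Sh(B)$; equivalently, $\iota_*$ commutes with the reflection functor associated to the Bousfield localization. The induced inclusion $\bar\iota_*: \Sh_{\L'}(B)/\Loc(B) \hookrightarrow \Sh_{\L}(B)/\Loc(B)$ therefore still preserves limits and colimits on this local basis. Since $\msh$ is an $\infty$-sheaf of compactly generated dg-categories (as in the proof of Proposition \ref{prop:corestriction}) and $\mathrm{Pr}^{\mathrm{R}}_{\omega,\mathrm{dg},k}$ is closed under the relevant small limits by \cite[Prop.~5.5.7.6]{HTT}, the global inclusion $\iota_*: \msh_{\L'}(\L') \to \msh_{\L}(\L)$ inherits preservation of both small limits and small colimits.

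The adjoint functor theorem for presentable dg-categories then yields a left adjoint $\iota^*$ and a right adjoint $\iota^!$. The chain $\iota^* \dashv \iota_* \dashv \iota^!$ in particular shows $\iota_*$ preserves filtered colimits, so its left adjoint $\iota^*$ preserves compact objects. The main obstacle I anticipate is the descent through the $\Loc(B)$-localization: one must verify carefully that quotienting by $\Loc(B)$ on both sides preserves the preservation of colimits for $\iota_*$, which is where one could naively lose a right adjoint. I would resolve this by the matching of $\Loc(B)$-local objects across the two subcategories noted above, so that the adjoints assemble as $\iota^* \simeq L_{\L'} \circ \iota^* \circ R_{\L}$ and symmetrically for $\iota^!$, where $L$ and $R$ denote the quotient and its fully faithful right adjoint on each side.
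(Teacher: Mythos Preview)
Your proposal is correct and follows essentially the same argument as the paper's proof: both reduce to the finite-to-one front projection setting, use the local description $\msh_{\L}(\Omega)\cong\Sh_{\L}(B)/\Loc(B)$ to check that the inclusion $\iota_*$ preserves limits and colimits on a basis, and then pass to global sections via the closure of $\mathrm{Pr}^{\mathrm{R}}_{\omega,\mathrm{dg},k}$ under small limits. Your treatment of the localization step is a bit more explicit than the paper's (which simply asserts that the inclusion of quotients preserves limits and colimits), but the strategy is the same.
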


\begin{proof}
The argument is similar to that for Proposition \ref{prop:corestriction}. Without loss of generality, we consider a Legendrian embedding $\L \subset T^{*}_{\infty}M$ such that the front projection $\pi: \L \to M$ is finite-to-one. There is a topological basis on which the inclusion functor $\iota_*: \msh_{\L'}(\Omega) \hookrightarrow \msh_\L(\Omega)$ is a morphism in the $\infty$-category $\operatorname{Pr}^\text{R}_{\omega,dg,k}$ of compactly generated dg-categories, i.e.~it preserves limits and (filtered) colimits. Following Remark \ref{rem:microsheaf-different-definition}, for any point in $T^*M$, there exists an open ball $B \subset M$ together with a union of open balls $\Omega \subset T^{*}_{\infty}B$ such that $\L \cap T^{*}_{\infty}B \subset \Omega$, and 
$\msh_{\L}(\Omega) = \Sh_{\L}(B) / \Loc(B)$.
In that case, given the inclusion $\L' \subset \L$, the inclusion functor $\iota_*: \msh_{\L'}(\Omega) \hookrightarrow \msh_{\L}(\Omega)$ is given by the inclusion of sheaves
$$\iota_*: \Sh_{\L'}(B)/\Loc(B) \lr \Sh_{\L}(B)/\Loc(B),$$
which admits left and right adjoints by \cite[Cor.~4.22]{GPS3}. Together with the compact generation of the categories, cf.~\cite[Cor.~4.21]{GPS3}, this shows that on the topological basis the inclusion functor lands in the $\infty$-category $\operatorname{Pr}^\text{R}_{\omega,dg,k}$. Since $\operatorname{Pr}^\text{R}_{\omega,dg,k}$ admits all small limits, for global sections, the inclusion functor also lands in $\operatorname{Pr}^\text{R}_{\omega,dg,k}$ and the statement follows.
\end{proof}

\begin{cor}\label{cor:Lagrangian_fillings_KS_stack}
Let $(X, \lambda_{st})$ be a 4-dimensional Weinstein domain and $L \subset (X, \lambda_{st})$ an exact Lagrangian surface with vanishing Maslov class. Suppose that $L$ is equipped with an $\bL$-compressing system $\D$ whose Lagrangian disks are $\D^*$ such that $\bL=L\cup\D^*$ is a Lagrangian skeleton for $(X, \lambda_{st})$. Then there exists a localization functor
$$\iota^*: \msh_\bL(\bL) \lr \Loc(L).$$
\end{cor}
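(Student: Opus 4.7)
The plan is to apply Proposition \ref{prop:viterbo-restrict} to the closed Lagrangian inclusion $L \subset \bL$, and then identify the resulting microlocal sheaf category on the smooth Lagrangian $L$ with local systems on $L$. In more detail, first I would verify the hypotheses of Proposition \ref{prop:viterbo-restrict}: both $L$ and $\bL = L \cup \D^*$ are compact subanalytic Lagrangian subsets of $(X, \lambda_{st})$ (the compactness of $\bL$ follows since $L$ is a compact exact filling and the $\D^*$ are Lagrangian disks attached along their boundaries to $L$), and $\bL$ is endowed with a polarization by assumption, which restricts to a polarization on $L$. The inclusion of closed subanalytic Lagrangian subsets $\iota: L \hookrightarrow \bL$ therefore induces an inclusion $\iota_*: \msh_L(L) \hookrightarrow \msh_\bL(\bL)$, which by Proposition \ref{prop:viterbo-restrict} admits a left adjoint
\[
\iota^*: \msh_\bL(\bL) \lr \msh_L(L).
\]

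Next, I would identify $\msh_L(L) \cong \Loc(L)$. Since $L$ is a smooth Lagrangian surface with vanishing Maslov class, its Legendrian lift in the contactification is locally (up to contactomorphism) equivalent to the zero section of a cotangent bundle. By the invariance under contactomorphism for the Kashiwara-Schapira stack (Theorem \ref{thm:invariance}) together with the standard computation $\msh_{0_M}(0_M) \cong \Loc(M)$ for the zero section of a cotangent bundle $T^*M$, as recalled in Example \ref{ex:ks-stack-local-system} and following the results of \cite[Part 10]{Guillermou23_SheafSummary}, one obtains a quasi-equivalence $\msh_L(L) \cong \Loc(L)$ globally (the vanishing of the Maslov class is exactly what ensures these local equivalences glue to a global one, with no shift or twist ambiguity). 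Composing these two functors gives the desired localization functor
\[
\iota^*: \msh_\bL(\bL) \lr \msh_L(L) \cong \Loc(L).
\]

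The main conceptual step is the first one, namely obtaining the left adjoint to $\iota_*$ at the level of global sections of the Kashiwara-Schapira stack, which is carried out by Proposition \ref{prop:viterbo-restrict}. The identification $\msh_L(L) \cong \Loc(L)$ in the second step is essentially local on $L$ and, given the vanishing Maslov class assumption and the polarization, is a standard consequence of the contactomorphism invariance of the Kashiwara-Schapira stack. Apart from these two ingredients, no further geometric input is needed.
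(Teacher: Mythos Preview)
Your proposal is correct and follows essentially the same approach as the paper: apply Proposition \ref{prop:viterbo-restrict} to the closed inclusion $L \subset \bL$ to obtain the left adjoint $\iota^*: \msh_\bL(\bL) \to \msh_L(L)$, and then invoke \cite[Part 10]{Guillermou23_SheafSummary} together with the vanishing Maslov class hypothesis to identify $\msh_L(L) \cong \Loc(L)$. The paper's proof is slightly terser but uses the same two ingredients in the same order.
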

\begin{proof}
Consider the inclusion $L \subset \bL$. Proposition \ref{prop:viterbo-restrict} applied to $\L'=L$ and $\bL$ gives a localization functor $\iota^*: \msh_\bL(\bL) \lr \msh_L(L)$. Since $L$ is assumed to have vanishing Maslov class, \cite[Part 10]{Guillermou23_SheafSummary} implies the equivalence $\msh_L(L) \cong \Loc(L)$. 
\end{proof}

\begin{remark} The argument in \cite[Thm.~4.13 and Cor.~4.21]{GPS3} shows that the localization functor $\iota^*: \msh_{\L}(\L) \hookrightarrow \msh_{\L'}(\L')$ is given by localization along all the corepresentatives of microstalk functors on $\L \setminus \L'$. 
From the perspective of wrapped Fukaya categories, the localization functor can be viewed as a Viterbo restriction functor from $X$ to a Weinstein neighbourhood of the Lagrangian $L$ (symplectomorphic to a disk cotangent bundle $T^*_{<\varepsilon}L$, $\varepsilon\in\R_+$ small enough).\hfill$\Box$
\end{remark}

\begin{remark}
It can be proven that the inclusion functor $\iota_*: \Loc(L) \hookrightarrow \msh_\bL(\bL)$ agrees with the sheaf quantization functor $\Loc(L) \hookrightarrow \Sh_{\Lambda}(\bR^2)_0$ from \cite{NadlerShende} under the equivalence $\msh_\bL(\bL) \cong \Sh_{\Lambda}(\bR^2)_0$, cf.~\cite[Thm.~1.1]{LiNote}. One can also show that the sheaf quantization functors of \cite{NadlerShende} and \cite{JinTreumann} agree in simple cases, cf.~\cite[Prop.~1.8]{LiCobordism1} and \cite[Rem.~B.23]{CasalsLi}. The latter functor is the sheaf quantization functor used to define algebraic torus charts and microlocal holonomies in the moduli of sheaves $\M(\Lambda)$ and $\M(\Lambda, \tt)$, cf.~\cite[Section 4.3]{CW} and \cite[Section 5.1]{CasalsLi}.\hfill$\Box$
\end{remark}

Finally, when we have basepoints on the boundary of the Lagrangian filling, we proceed as follows. Let $L$ be a smooth surface with boundary $\Lambda:=\dd L$ and $\tt \sse \Lambda$ a set of basepoints. By Definition \ref{def:cat_decorated_KS}, the dg-category $\Loc(L, \tt)$ is the homotopy colimit of the diagram
    \begin{center}
        \begin{tikzcd} 
         \displaystyle\prod_{i=1}^{|\tt|}\modk\ar[r,"(i_\tt^*)^\ell"] \ar[d,"\Delta^\ell" left] & \Loc(L) \\
        \modk&
    \end{tikzcd}
    \end{center}
    where $(i_\tt^*)^\ell: \Loc(\tt) \lr \Loc(L)$ is corestriction functor, that is, the left adjoint of the restriction functor $i_\tt^*: \Loc(L) \lr \Loc(\tt)$. The pointed version of Corollary \ref{cor:Lagrangian_fillings_KS_stack} then reads as follows.

\begin{cor}\label{cor:Lagrangian_fillings_KS_stack_pointed}
    Let $(X, \lambda_{st})$ be a 4-dimensional Weinstein domain, $\Lambda \subset \partial_\infty X$ a Legendrian link in the ideal contact boundary with basepoints $\tt \sse \Lambda$, and $L \subset (X, \lambda_{st})$ an exact Lagrangian filling of $\Lambda$ with vanishing Maslov class. Suppose that $L$ is equipped with an $\bL$-compressing system $\D$ such that $\bL:=L\cup\D$ is a Lagrangian skeleton for $(X, \lambda_{st})$. Then there exists a localization functor that preserves compact objects:
    $$\iota^*: \msh_{\bL,\tt}(\bL,\tt) \lr \Loc(L, \tt)$$
\end{cor}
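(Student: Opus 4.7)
The plan is to identify both $\msh_{\bL,\tt}(\bL,\tt)$ and $\Loc(L,\tt)$ as homotopy pushouts of spans of the same shape and to exhibit a morphism between these spans. Concretely, both sides are homotopy colimits of diagrams
$$\Mod(k) \xleftarrow{\Delta^\ell} \prod_{i=1}^{|\tt|}\Mod(k) \xrightarrow{\,f\,} \cC,$$
with $(f,\cC) = (m_{\bL,\tt}^\ell,\msh_\bL(\bL))$ in the first case and $(f,\cC) = ((i_\tt^*)^\ell,\Loc(L))$ in the second. I would take the identity functors on the two $\Mod(k)$ vertices and the functor $\iota^*: \msh_\bL(\bL)\lr\Loc(L)$ from Corollary \ref{cor:Lagrangian_fillings_KS_stack} on the third vertex.

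The left square commutes on the nose, so the content reduces to producing a natural equivalence
$$\iota^*\circ m_{\bL,\tt}^\ell \simeq (i_\tt^*)^\ell.$$
By uniqueness of left adjoints, this is equivalent, after passing to right adjoints, to the identification $m_{\bL,\tt}\circ\iota_* \simeq i_\tt^*$ on $\Loc(L)$. Geometrically, this is the microlocal statement that for a local system $\cL$ on $L$ and a basepoint $t\in\tt\sse\dd L$, the microstalk of the sheaf quantization $\iota_*(\cL)$ along $\La$ at $t$ in the codirection $\xi_t$ recovers the stalk $\cL_t$. I would establish it by identifying $\iota_*: \Loc(L)\hookrightarrow\msh_\bL(\bL)$ with the sheaf quantization functor (as noted in the remark preceding the corollary, and in \cite{JinTreumann,NadlerShende}, \cite[Thm.~1.1]{LiNote}) and invoking the standard computation of microstalks of such quantizations at points of the Lagrangian support.

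Once this compatibility is verified, functoriality of homotopy colimits in the $\infty$-category $\mathrm{Pr}^\mathrm{R}_{\omega,\mathrm{dg},k}$ used throughout Section \ref{sec:corestriction} produces the desired functor $\iota^*:\msh_{\bL,\tt}(\bL,\tt)\lr\Loc(L,\tt)$. For the preservation of compact objects, each of the three constituent functors in the morphism of spans is a morphism in $\mathrm{Pr}^\mathrm{R}_{\omega,\mathrm{dg},k}$: the identities trivially, and the unpointed $\iota^*$ because Proposition \ref{prop:viterbo-restrict} exhibits $\iota_*$ as admitting both a left adjoint $\iota^*$ and a further right adjoint $\iota^!$, so $\iota_*$ preserves all small colimits and therefore $\iota^*$ preserves compact objects. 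Consequently the induced pushout functor again lies in $\mathrm{Pr}^\mathrm{R}_{\omega,\mathrm{dg},k}$ and preserves compact objects.

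The main obstacle is the compatibility $m_{\bL,\tt}\circ\iota_* \simeq i_\tt^*$: it is not a formal consequence of the pushout formalism but rather a genuine microlocal computation relating sheaf quantization on $L$ with microlocalization at boundary points, and it requires care with the Maslov-class hypothesis (so that $\msh_L(L)\cong\Loc(L)$) and with the co-orientation conventions at the basepoints in order to match the microstalk with the stalk itself rather than its dual.
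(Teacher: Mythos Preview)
Your approach is correct and is precisely the argument the paper has in mind: the corollary is stated without proof, being the evident pointed analogue of Corollary~\ref{cor:Lagrangian_fillings_KS_stack} obtained by applying functoriality of homotopy pushouts to the morphism of spans you describe. One small remark: the compatibility $m_{\bL,\tt}\circ\iota_* \simeq i_\tt^*$ that you flag as the main obstacle is in fact more formal than you suggest---since the basepoints $\tt\subset\partial L$ lie away from where the $\bL$-compressing disks attach, a neighborhood of $\tt$ in $\bL$ is already contained in $L$, so the microstalk of $\iota_*\cL$ at $t\in\tt$ is computed entirely within $\msh_L\cong\Loc(L)$ and equals the stalk $\cL_t$; no appeal to sheaf quantization or delicate microstalk computations is needed.
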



\section{Hochschild chains and regular functions on moduli stacks}\label{sec:corestriction_HH} This section concludes the proof of Theorem \ref{thm:main}. In order to do that, we first establish a series of results on Hochschild homology and the $\HO$ map (\ref{eq:HO_map}), used in our argument for Theorem \ref{thm:main}. Specifically, Subsection \ref{ssec:general_framework} presents the necessary framework on Hochschild chains. The two key results are then established in Lemma \ref{lem:HH_near_cycle}, in Subsection \ref{ssec:HH0_near_rel_cycle}, and Proposition \ref{prop:HO_in_Rmod}, in Subsection \ref{ssec:computation_HO_near_relative_cycle}.
Subsection \ref{subsec:corestrict-HH} establishes the necessary results that combine the corestriction functors from Section \ref{sec:corestriction} with the Hochschild chains functor.
Theorem \ref{thm:main} is then proven in Subsection \ref{ssec:main_proof}.


\subsection{The general framework}\label{ssec:general_framework} Let $\SC$ be a smooth dg-category and $\M_\SC$ its moduli stack of pseudoperfect objects. By \cite[Prop. 3.4]{ToenVaquie07}, the functor $\SC\longmapsto\M_\SC$ in (\ref{eq:modulistack_functor}) is right adjoint to the functor $L_\textit{perf\,}$ sending a $D^-$-stack to its dg-category of perfect complexes. In particular, we have the equality
$$\Hom_{D^-\operatorname{St}(k)}(\M_\SC,\M_\SC)=\Hom_{\operatorname{Ho(dg-cat)}^{op}}(\SC,L_\textit{perf\,}(\M_\SC)).$$
Therefore, the identity morphism on $\M_\SC$ gives a canonical functor $\Id^\ell:\SC\lr L_\textit{perf\,}(\M_\SC)$. Applying Hochschild chains to this functor we obtain a map
$$\HH_*(\Id^\ell):\HH_*(\SC)\lr \HH_*(L_\textit{perf\,}(\M_\SC)).$$
By \cite[Corollary 4.2]{ToenVezzosiHKR}, in line with the HKR theorem, this yields a map
\begin{equation}\label{eq:HH_to_regular_functions}
\HO:\HH_*(\SC)\lr \Gamma(\M_\SC,\SO_{\M_\SC}).
\end{equation}
For more details, see \cite[Section 5.2]{BravDycker21} or \cite[Section 6.1.2]{BozecCalaqueScherotzke24_RelCritical}, specifically \cite[Example 6.8]{BozecCalaqueScherotzke24_RelCritical}. We choose the notation $\textcolor{blue}{\HO}$ for (\ref{eq:HH_to_regular_functions}) as a mnemonic for {\it ``from \textcolor{blue}{H}ochschild to \textcolor{blue}{{\bf$\SO$}}''}. We often also denote by $\HO$ the restriction $\HO_0$ of $\HO$ in (\ref{eq:HH_to_regular_functions}) to Hochschild 0-chains $\HH_0$, if the context makes that clear.

\noindent In the context where we study a Legendrian link $(\La,\tt)$, we set $\SC=\Sh^c_{\La,\tt}(\R^2)_0$ and consider its moduli of sheaves $\M_\SC=\M({\La,\tt})$. The goal is to obtain (global) regular functions of $\M({\La,\tt})$. By (\ref{eq:HH_to_regular_functions}), we can achieve that by constructing Hochschild classes in $\HH_*(\SC)$. Our next step is thus the construction of classes in $\HH_*(\SC)$, which we achieve using the geometry of Lagrangian fillings and $\L$-compressing systems.

\subsubsection{Some context}\label{rmk:advantag_using_HH}
As it is apparent from our previous works, see e.g.~ \cite{CGGLSS,CW}, it can be challenging to directly construct such regular functions on $\M({\La,\tt})$. Thus far, the only successful strategy has been as follows:
\begin{enumerate}
    \item Find an embedded exact Lagrangian filling $L$ with an $\L$-compressing system. In the chart of local systems $T(L)\sse\M({\La,\tt})$ associated to this filling $L$, one constructs regular functions on that chart: these are natural from the geometry of the $\L$-compressing system and readily seen to be regular on $T(L)$. There is often no simple argument deciding whether such regular functions extend to regular functions on $\M({\La,\tt})$.\\

    \item Employ ad-hoc methods, often exploiting the specific combinatorics (such as plabic graphs in \cite{CW} or weaves in \cite{CGGLSS}) and relation to cluster mutations, so as to argue that the regular functions built on $T(L)$ extend to regular functions on $\M({\La,\tt})$. At core, this is an argument that requires explicit formulas related to the underlying combinatorics. 
    See for instance \cite[Section 4.9 \& Prop. 4.38]{CW} or \cite[Section 5.3]{CGGLSS}, specifically Lemmas 5.26, 5.27 and 5.28 in {\it loc.~cit}.
\end{enumerate}
The strength of using Hochschild classes in $\HH_*(\SC)$ and the $\HO$ map in (\ref{eq:HH_to_regular_functions}) is that Step (2) can be bypassed entirely. The results thus obtained apply in great generality and with barely a need for explicit formulas. In particular, Theorem \ref{thm:main} applies to Legendrian links that are not closures of positive braids.

\color{black}

\subsection{Explicit description of $\HH_0$ near a relative cycle}\label{ssec:HH0_near_rel_cycle} Let $\La\sse (T^*_\infty\R^2,\xi_\st)$ be a Legendrian link with basepoints $T$, and $L\sse (T^*\R^2,\la_\st)$ a Lagrangian filling. Let $\D=\{\g_1,\ldots,\g_{b_1(L)}\}$ be an $\L$-compressing system and $\eta:[0,1]\lr(L,\La\setminus T)$ a relative cycle. Let $\bL:=L\cup \D^*$, where $\D^*=D_1\cup\ldots\cup D_{b_1(L)}$ are the Lagrangian disks $D_i$ associated to $\D$, each $D_i$ with boundary $\dd D_i=\gamma_i$. There exists an open neighborhood $\Op(\eta)\sse \bL$ which is of the form $(0,1)\times I^\sigma_{p_1,\ldots,p_n}$ where
\begin{gather*}I^\sigma_{p_1,\ldots,p_n}:=I\cup I_{p_1}^{\sigma_1}\cup\ldots I_{p_n}^{\sigma_n}\sse[0,1]\times[-1,1],\\ 
I:=\{(x,y)\in [0,1]\times[-1,1]: y=0\},\\
I^+_{p_i}:=\{(x,y)\in [0,1]\times[-1,1]: x=p_i, y\in[0,1]\},\\
I^-_{p_i}:=\{(x,y)\in [0,1]\times[-1,1]: x=p_i, y\in[-1,0]\}
\end{gather*}
where $\sigma=(\sigma_1,\ldots,\sigma_n)\in \{\pm\}^n$ is a choice of signs and $p_i\in(0,1)$ for $i\in[1,n]$. 

See Figure \ref{fig:Filling_Merodromy3} (left) for an example. Note that $I^\sigma_{p_1,\ldots,p_n}$ can be understood as a 1-dimensional (arboreal) Lagrangian skeleton. The symplectic structure can be taken to be that of the cotangent bundle $(T^*I,\la_{st})$ modified accordingly so that the (positive or negative) conormals of the points $p_i\in I$ are part of the Lagrangian skeleton, cf.~\cite[Section 3]{Starkston}. For a {\it positive} relative cycles we have all the signs of $\sigma$ being positive, as in Figure \ref{fig:Filling_Merodromy3} (left). In other words, for a positive relative cycle, a neighborhood $\Op(\eta)\sse \bL$ of $\eta\sse\bL$ retracts to $\bK_n([0,1])$ -- as in Definition \ref{def:K_skeleton} -- for some $n$.

\begin{center}
	\begin{figure}[H]
		\centering
		\includegraphics[scale=0.95]{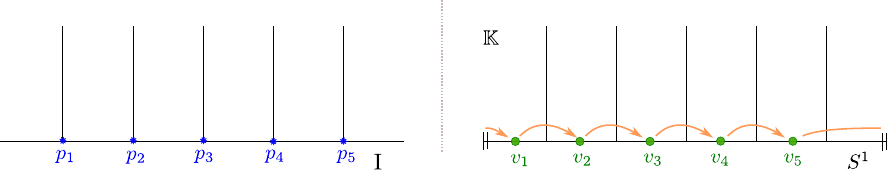}
		\caption{(Left) The 1-dimensional Lagrangian skeleton $I^\sigma_{p_1,\ldots,p_5}$, where all the signs of $\sigma$ are positive. (Right) The quiver associated to the 1-dimensional Lagrangian skeleton $\mathbb{K}$ given by a base $S^1$ and five positive spikes $\nu(K)$, with $K=\{p_1,\ldots,p_5\}$.}
		\label{fig:Filling_Merodromy3}
	\end{figure}
\end{center}

We want to explicitly describe Hochschild 0-chains of the global sections of the Kashiwara-Schapira stack on this 1-dimensional Lagrangian skeleton. (It suffices to describe the 0th Hochschild homology, which we also denote by $\HH_0$ onwards.) In fact, since the microstalks are all identified in $\M{(\La,\tt)}$, it suffices to study the case where the interval $I$ is closed up to a circle. This reduces to the case of the 1-dimensional Lagrangian skeleton $\bK_n:=\bK_n(S^1)$ in Definition \ref{def:K_skeleton}. The result we need for such Lagrangian skeleton $\bK_n$  reads as follows.

\begin{lemma}\label{lem:HH_near_cycle} Let $K\sse S^1$ be a finite set of $n$ positively co-oriented points on a circle $S^1$. 
Consider the dg-subcategory $\Sh^c_{\mathbb{K}_n}(S^1)$ of compact objects in the dg-category of sheaves on $S^1$ with singular support on $\mathbb{K}_n$.

\noindent Then there is an isomorphism $\HH_0(\Sh^c_{\mathbb{K}_n}(S^1))\cong k[\rho]\oplus k^{n}$ of vector spaces, where $\rho\in {H}_1(S^1,\Z)$ is the positive generator, geometrically corresponding to the (reduced) absolute cycle given by $S^1$ under $\HH_0(\Sh^c_{\mathbb{K}_n}(S^1)) \lr \HH_0(\Loc^c(S^1))$.
\end{lemma}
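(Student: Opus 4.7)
The plan is to first apply Example \ref{ex:K_skeleton}, which identifies $\Sh^c_{\bK_n}(S^1)$ with $\Perf(k\langle Q_K\rangle)$, the dg-category of perfect representations of the path algebra of the cyclic $n$-quiver $Q_K$. Since $\HH_0$ is a derived Morita invariant, this reduces the computation to
\[
\HH_0(k\langle Q_K\rangle) \;=\; k\langle Q_K\rangle \big/ [k\langle Q_K\rangle, k\langle Q_K\rangle].
\]

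Next, I would perform a direct commutator analysis on the path algebra. Label the vertices $v_1,\dots,v_n$ and the arrows $a_i: v_i \to v_{i+1\bmod n}$, so paths provide a $k$-basis of $k\langle Q_K\rangle$ indexed by a starting vertex and a length. Three observations suffice: (i) for any non-closed path $p$ of positive length one has $[p,\, e_{s(p)}] = p$, hence $[p] = 0$ in $\HH_0$, so all non-closed classes vanish; (ii) for closed paths of length $kn$, the commutator $[a_j,\, a_{j-1}\cdots a_{j+1}]$ identifies the loop based at $v_j$ with the loop based at $v_{j+1}$, so iterating produces a single class $\rho^k := [\ell^k]$ for each $k\geq 1$, where $\ell := a_n\cdots a_1$ is the length-$n$ loop based at $v_1$; (iii) the trivial-path classes $[e_1],\dots,[e_n]$ remain linearly independent, since no commutator of paths can lie in the length-zero piece of $k\langle Q_K\rangle$. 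Combining (i)--(iii) exhibits the basis $\{[e_1],\dots,[e_n]\}\cup\{\rho^k\}_{k\ge 1}$ and hence the claimed vector-space isomorphism $\HH_0(\Sh^c_{\bK_n}(S^1)) \cong k^n \oplus k[\rho]$, where $k[\rho]$ is read with $\rho$ in positive degree.

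For the geometric identification of $\rho$, I would use the natural localisation functor $\Sh^c_{\bK_n}(S^1)\to\Loc^c(S^1)$ provided by Proposition \ref{prop:viterbo-restrict} applied to the inclusion $S^1\subset\bK_n$; on the path algebra side, this corresponds to inverting the spike arrows $a_i$, yielding a Morita equivalence $k\langle Q_K\rangle[a_1^{-1},\dots,a_n^{-1}]\simeq k[t,t^{-1}]$ under which $\ell\mapsto t$. The induced map on $\HH_0$ sends $\rho\mapsto [t]\in \HH_0(k[t,t^{-1}])=k[t,t^{-1}]$, and $[t]$ matches, under the standard HKR identification $H_*(S^1)\hookrightarrow \HH_*(\Loc^c(S^1))$, the positive generator of $H_1(S^1,\Z)$.

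\textbf{Main obstacle.} The technical heart is the commutator bookkeeping in steps (ii)--(iii): verifying that the $n$ vertex classes remain distinct while every cyclic rotation of each closed path is collapsed onto a single class, with no hidden further relations. A secondary subtlety is checking that the localisation functor used in the third paragraph coincides \emph{canonically} with the geometric corestriction from $S^1\subset\bK_n$, so that the identification of $\rho$ with the positive generator of $H_1(S^1,\Z)$ is unambiguous and compatible with the conventions used later in the proof of Theorem \ref{thm:main}.
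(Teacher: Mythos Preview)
Your proposal is correct and follows essentially the same approach as the paper: reduce to the path algebra of the cyclic quiver via Nadler's result, compute $\HH_0$ as the trace space by the same commutator analysis (non-closed paths vanish via idempotent commutators, closed paths are identified under cyclic rotation, idempotents survive by the length grading), and identify $\rho$ geometrically by tracking it through the localization functor to $\Loc^c(S^1)\simeq\Perf(k[t^{\pm1}])$. The only cosmetic differences are that the paper cites Corollary~\ref{cor:Lagrangian_fillings_KS_stack} rather than Proposition~\ref{prop:viterbo-restrict} for the localization and phrases the target identification via $\HH_0(k[\pi_1(S^1)])\cong k[\bZ]$ rather than HKR.
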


\begin{proof}
By \cite[Theorem 1.8]{Nadler17}, $\Sh^c_{\mathbb{K}_n}(S^1)$ is equivalent to the dg-category of representations of the path algebra $k\langle Q_K\rangle$ of the cyclic quiver $Q_K$ with vertices $\{v_1,\ldots,v_n\}$. Confer Example \ref{ex:K_skeleton} and see Figure \ref{fig:Filling_Merodromy3} (right) for the case of the 5-cycle quiver, $n=5$. For each point in $K$, this path algebra contains a unique arrow $a_i$ from $v_i$ to $v_{i+1}$. (Cyclically understood, i.e.~there is an arrow from $v_n$ to $v_1$.) In Figure \ref{fig:Filling_Merodromy3}, vertices are depicted in green and arrows in orange. Both such vertices $v_i,v_{i+1}$, come equipped with idempotents $e_i,e_{i+1}$ that act as identities at the sources and targets of arrows, each corresponding to the constant path at the corresponding vertex, see e.g.~\cite{DerksenWeyman05_Notices}. The path algebra $k\langle Q_K\rangle$ is linearly generated by these elements $a_i,e_i$, $i\in[1,n]$, and the algebra structure is given by concatenation of paths. In particular, we have $[e_i,a_i]=a_ie_i-e_ia_i=a_ie_i=a_i$ for the commutators if $n\geq2$. Geometrically, each vertex corresponds to the microlocal stalk of a sheaf at each of the intervals in $S^1\setminus K$ and the arrows capture the microlocal parallel transport between them.

\noindent Since Hochschild homology is a derived Morita invariant, e.g.~\cite[Prop.~2.4.3]{McCarthy_HH_of_category}, we have
$$\HH_*(\Sh^c_{\mathbb{K}_n}(S^1))\cong\HH_*(k\langle Q_K\rangle).$$
For any algebra $R$, there is a vector space isomorphism $\HH_0(R,R)\cong R/[R,R]$, where the trace space $R/[R,R]$ is the vector space obtained by quotienting $R$ (as a vector space) by the vector space generated by all the commutators in $R$. For the cyclic quiver $Q_K$, we claim that $\HH_0(k\langle Q_K\rangle)\cong k[\rho]\oplus k^n$, where $\rho\in k\langle Q_K\rangle$ is the cycle element $a_1a_2\ldots a_n$ and the summand $k^n$ is spanned by the idempotents. Indeed, any element $\gamma\in k\langle Q_K\rangle$ corresponding to a path in $Q_K$ whose source vertex $s(\gamma)$ is different from its target vertex $t(\gamma)$ is contained in $[k\langle Q_K\rangle,k\langle Q_K\rangle]$ as $[e_{s(\gamma)},\gamma]=\gamma$. Similarly, the cycle $a_1a_2\ldots a_n$ is equivalent to any cyclic permutations $a_ia_{i+1}\ldots a_{i-1}$. Therefore $\HH_0(k\langle Q_K\rangle)\cong k[\rho]\oplus k\langle e_1,\ldots,e_n\rangle$, as claimed. 

\noindent Finally, we note that the equivalence from \cite[Theorem 1.8]{Nadler17} sends the quiver cycle $\rho$ to a homological representative $\rho$ of $S^1\sse T^* S^1$. In fact, consider the localization functor $\Sh^c_{\bK_n}(S^1) \lr \Loc^c(S^1)$ given by Corollary \ref{cor:Lagrangian_fillings_KS_stack}. Let $Q_0$ be the quiver with one vertex and a single loop $\rho$. Then $\Loc^c(S^1)$ is equivalent to the dg-category of representations of the path algebra localized along the loop $k\left<Q_0 \right>_\rho$. Using \cite[Section 4.2]{Nadler17}, the localization functor $\Sh^c_{\bK_n}(S^1) \lr \Loc^c(S^1)$ can be described by the localization morphism on path algebras $k\left< Q_K\right> \to k\left< Q_0 \right> \to k\left< Q_0 \right>_\rho$ which localizes $a_1, a_2, \dots, a_n$ and sends the product $a_1a_2 \dots a_n$ to the invertible cycle $\rho$. Therefore, on Hochschild homology, we have 
$$\HH_0(\Sh^c_{\bK_n}(S^1)) \xrightarrow{\sim} k[\rho] \oplus k^n \to k[\rho^{\pm 1}] \xrightarrow{\sim} \HH_0(\Loc^c(S^1)).$$
Under the isomorphism $\HH_0(\Loc^c(S^1)) \cong \HH_0(k[\pi_1(S^1)]) \cong k[\bZ]$, cf.~ Example \ref{ex:localsystem_circle}, $\rho$ corresponds to the absolute cycle $\rho \in H_1(S^1, \bZ)$.
\end{proof}

\begin{remark}
In the proof of Theorem \ref{thm:main}, we only need the free positive generator $\rho \in \HH_0(\Sh_{\bK_n}^c(S^1))$. We could then avoid the computation of $\HH_0(\Sh_{\bK_n}^c(S^1))$ and reduce instead to the case $n=1$.
Indeed, joining the $n$ points together gives rise to a functor
$$\Sh_\bK^c(S^1) \lr \Sh_{\bK_1}^c(S^1).$$
See ~e.g.~\cite[Prop.~2.12]{ZhouSheaf}. Then, we could use the techniques in \cite[Section 5]{KuoLi-CY}, developed for computations of the Hochschild homology, and show that the induced map on Hochschild homology sends $\rho$ to $\rho$ under the identifications:
$$\HH_0(\Sh_{\bK_n}^c(S^1)) \xrightarrow{\sim} k[\rho] \oplus k^{n} \to k[\rho] \xrightarrow{\sim} \HH_0(\Sh_{\bK_1}^c(S^1)).$$
We mention this alternative argument for completeness: it is not needed for Theorem \ref{thm:main}.
\hfill$\Box$
\end{remark}


Similarly, we can consider the $1$-dimensional skeleton $\bK_n(D^1) \subset T^*D^1$ marked with basepoints and obtained the same Hochschild homology, as follows.

\begin{definition}
Let $\bK \subset T^*D^1$ be a conic Lagrangian susbet. By definition, the dg-category $\Sh_{\mathbb{K}, \partial D^1}(D^1)$ is the homotopy colimit of the diagram
\begin{center}
    \begin{tikzcd}
         \displaystyle\prod\nolimits_{i=1}^{2}\modk\ar[r,"(i_{\partial D^1}^*)^\ell"] \ar[d,"\Delta^\ell" left] & \Sh_{\mathbb{K}}(D^1) \\
        \modk&
    \end{tikzcd}
\end{center}
where $(i_{\partial D^1}^*)^\ell: \Loc(\partial D^1) \lr \Sh_{\bK}(D^1) $ is corestriction functor which is left adjoint to the restriction functor $i_{\partial D^1}^*: \Sh_{\bK}(D^1) \lr \Loc(\partial D^1)$.\hfill$\Box$
\end{definition}

Lemma \ref{lem:HH_near_cycle} then implies the following result:

\begin{cor}\label{lem:HH_near_cycle_pointed} Let $K\sse D^1$ be a finite set of $n$ positively co-oriented points on a closed interval $D^1$. Consider the dg-subcategory $\Sh^c_{\mathbb{K}_n, \partial D^1}(D^1)$ of compact objects in the dg-category of sheaves on $D^1$ with singular support on $\mathbb{K}_n$ and basepoints given by $\partial D^1$.

\noindent Then there is an isomorphism $\HH_0(\Sh^c_{\mathbb{K}_n, \partial D^1}(D^1))\cong k[\rho]\oplus k^{n}$, where $\rho\in {H}_1(D^1, \partial D^1,\Z)$ is the positive generator corresponding to the relative 1-cycle given by $(D^1, \partial D^1)$ under $\HH_0(\Sh^c_{\mathbb{K}_n,\partial D^1}(D^1)) \lr \HH_0(\Loc^c(D^1, \partial D^1))$.
\end{cor}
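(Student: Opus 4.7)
The plan is to reduce Corollary \ref{lem:HH_near_cycle_pointed} to Lemma \ref{lem:HH_near_cycle} by exhibiting an equivalence of dg-categories
\[
\Sh^c_{\mathbb{K}_n,\partial D^1}(D^1)\;\cong\;\Sh^c_{\mathbb{K}_n}(S^1),
\]
where the circle $S^1$ is obtained from $D^1$ by identifying its two boundary points. This is the natural pointed analogue of Proposition \ref{prop:topological_model_decorated_skeleton}: the homotopy colimit in the definition of $\Sh^c_{\mathbb{K}_n,\partial D^1}(D^1)$ amounts to gluing the two microstalks of an object at the boundary $\partial D^1 = \{0,1\}$ along a common $\modk$-factor, which is exactly the topological operation of collapsing the two endpoints to a single point. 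Once this equivalence is established, Lemma \ref{lem:HH_near_cycle} immediately yields $\HH_0\cong k[\rho]\oplus k^n$, since the $n$ co-oriented interior points of $D^1$ become the $n$ co-oriented points of $S^1$ after the identification.

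To establish the equivalence itself, I would use the quiver description from \cite[Theorem 1.8]{Nadler17}, cf.\ Example \ref{ex:K_skeleton}. The category $\Sh^c_{\mathbb{K}_n}(D^1)$ is equivalent to representations of the path algebra of a type $A_{n+1}$ linear quiver with $n$ arrows: the $n+1$ vertices correspond to microstalks on the $n+1$ open intervals of $D^1\setminus K$, and the arrows encode microlocal parallel transport across the co-oriented punctures. The restriction functor to $\Loc(\partial D^1)$ reads off the microstalks at the two boundary intervals, i.e.\ the two end vertices of the $A_{n+1}$ quiver. The homotopy pushout against the diagonal $\Delta^\ell:\modk\lr\modk\times\modk$ then precisely identifies the two end vertices (with matching idempotents), producing the cyclic quiver $Q_K$ on $n$ vertices. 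This is the same quiver governing $\Sh^c_{\mathbb{K}_n}(S^1)$ in the proof of Lemma \ref{lem:HH_near_cycle}, so the two categories are quasi-equivalent and the identification $\HH_0\cong k[\rho]\oplus k^n$ follows from the derived Morita invariance of Hochschild homology.

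The remaining step is to verify that the generator $\rho$ in $\HH_0(\Sh^c_{\mathbb{K}_n,\partial D^1}(D^1))$ maps to the positive relative cycle under the localization morphism
\[
\HH_0(\Sh^c_{\mathbb{K}_n,\partial D^1}(D^1))\;\lr\;\HH_0(\Loc^c(D^1,\partial D^1)).
\]
This is the pointed analogue of the final step in the proof of Lemma \ref{lem:HH_near_cycle} and uses the same Nadler-type description \cite[Section 4.2]{Nadler17}: the localization $k\langle Q_K\rangle\lr k\langle Q_0\rangle_\rho$ inverts all the arrows $a_1,\ldots,a_n$ and sends their cyclic product to $\rho$. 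Identifying $\Loc^c(D^1,\partial D^1)$ as the homotopy colimit of $\modk\times\modk \leftleftarrows\modk$ and $\Loc^c(D^1)$ (so again the cyclic quiver $Q_0$ with one loop, via the analogue of Corollary \ref{cor:Lagrangian_fillings_KS_stack_pointed}), the class $\rho$ is sent to the positive generator of $H_1(D^1,\partial D^1,\mathbb{Z})\cong\mathbb{Z}$, as desired.

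The main obstacle I expect is making the identification in the first paragraph rigorous at the level of homotopy colimits of dg-categories, rather than just at the level of underlying quivers. One needs to check that the corestriction / microstalk functors coming from the decorated setup are genuinely computed by the restrictions to the two endpoint vertices in the $A_{n+1}$ quiver model, and that the pushout of path algebras along the diagonal produces the cyclic quiver path algebra in $\mbox{dg-Cat}_k$. Both are natural, but some care with adjoints and with the equivalence $\Loc(\partial D^1)\cong \modk\times\modk$ is required; alternatively, one can avoid this entirely by invoking Proposition \ref{prop:topological_model_decorated_skeleton} in the spirit of gluing a Lagrangian disk along the basepoints, thereby topologically realizing the identification $D^1/\partial D^1 \simeq S^1$ at the level of Kashiwara–Schapira stacks.
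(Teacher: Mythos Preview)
Your approach is essentially the same as the paper's: both reduce to Lemma~\ref{lem:HH_near_cycle} by establishing an equivalence $\Sh^c_{\mathbb{K}_n,\partial D^1}(D^1)\cong\Sh^c_{\mathbb{K}_n}(S^1)$. The difference lies only in how this equivalence is justified. The paper argues directly from the cosheaf property of the Kashiwara--Schapira stack: it removes a small closed interval $N\subset S^1\setminus K$, writes $\Sh_{\bK_n}(S^1)$ as the homotopy pushout of $\Loc(N)\leftarrow\Loc(\partial N)\to\Sh_{\bK_n}(S^1\setminus N)$ along corestriction functors, and then identifies this diagram term-by-term with the defining pushout of $\Sh_{\bK_n,\partial D^1}(D^1)$ (since $\Loc(N)\cong\modk$, $\Loc(\partial N)\cong\modk\times\modk$, and $S^1\setminus N\cong D^1$). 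This sidesteps the issue you flag about computing homotopy pushouts of path algebras. Your alternative suggestion via Proposition~\ref{prop:topological_model_decorated_skeleton} is exactly in this spirit and is the cleaner route; the quiver argument works but, as you note, requires additional care to verify that the dg-categorical pushout really is modeled by gluing vertices.
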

\begin{proof}
Note that there is an equivalence $\Sh_{\bK_n(S^1)}(S^1) \cong \Sh_{\bK_n(D^1),\partial D^1}(D^1)$ as follows. Let $N \subset S^1 \setminus K$ be a closed interval. Then we have a homotopy colimit diagram
\begin{center}
    \begin{tikzcd}
        \Loc(\partial N) \ar[r, "(i^*_{\partial N})^\ell"] \ar[d, "(j^*_{\partial N})^\ell"] & \Sh_{\bK_n}(S^1 \setminus N) \ar[d] \\
        \Loc(N) \ar[r] & \Sh_{\bK_n}(S^1).
    \end{tikzcd}
\end{center}
Since the corestriction functor $(j^*_{\partial N})^\ell: \Loc(\partial N) \lr \Loc(N)$ agrees with the corpoduct functor $\Delta^\ell: \prod_{i=1}^2\modk \lr \modk$, and the corestriction functor $(i^*_{\partial N})^\ell: \Loc(\partial N) \lr \Sh_{\bK_n}(S^1 \setminus N)$ agrees with the corestriction functor $(i^*_{\partial D^1})^\ell: \Loc(\partial D^1) \lr \Sh_{\bK_n}(D^1)$, we conclude that
$$\Sh_{\bK_n}(S^1) \cong \Sh_{\bK_n,\partial D^1}(D^1).$$
Then the result on Hochschild homology follows from Lemma \ref{lem:HH_near_cycle}.
\end{proof}

\subsection{Computation of $\HO$ map near a relative cycle}\label{ssec:computation_HO_near_relative_cycle} The goal of this subsection is to prove Proposition \ref{prop:HO_in_Rmod}, which allows us to describe the map $\HO$ in (\ref{eq:HO_map}) in the cases necessary for Theorem \ref{thm:main}. Let $R$ be a unital ring and consider the dg-category $\SC=\Perf(R)$ of perfect $R$-modules. A point
$$x:\Spec(k)\lr\M_\SC$$
classifies a functor $\varphi_x:\Perf(R)\lr\Perf(k)$ corepresented by $x^!:=R^!\otimes_R\varphi_x^r(k)$, i.e.~$\varphi_x\simeq \Hom_R(x^!,-)$. Let $(V,\rho)$, $\rho:R\lr\End_k(V)$, be the underlying $R$-module of $\varphi_x^r(k)$ and write $x:=\varphi_x^r(k)$ to ease notation, as in \cite[Example 3.7]{BravDycker21}. Note that $V$ is a perfect $k$-module and thus a finite-dimensional $k$-vector space. Hochschild 0-chains are given by $\HH_0(\Perf(R))\cong\HH_0(R)\cong R/[R,R]$, the trace space of $R$, cf.~\cite[Prop.~2.4.3]{McCarthy_HH_of_category}. That is, $\HH_0(\Perf(R))$ is the vector space spanned by the elements of $R$ quotiented by the subvector space spanned by commutators. Given $r\in R$, we let $[r]\in R/[R,R]$ denote its associated Hochschild 0-chain. The following result computes $\HO([r])$:

\begin{prop}\label{prop:HO_in_Rmod}
Let $\SC=\Perf(R)$ be the dg-category of perfect $R$-modules over a ring $R$ that is homologically smooth. Then 
$$\HO:\HH_0(\SC)\lr \mathrm{H}^0\Gamma(\M_\SC,\SO_{\M_\SC}),\quad \HO([r])(x)=\tr(\rho(r)),$$
where $x=(V,\rho)\in\M_\SC$ is a point $x:\Spec(k)\lr\M_\SC$ and $\tr$ denotes the trace of the endomorphism $\rho(r)\in\End_k(V)$.
\end{prop}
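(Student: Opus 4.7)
The plan is to unwind the definition of $\HO$ by exploiting the universal dg-functor that the Toen--Vaqui\'e adjunction attaches to the identity of $\M_\SC$, and then to reduce the HKR computation on $L_{\textit{perf\,}}(\M_\SC)$ to the classical fiberwise $k$-linear trace at each $\Spec(k)$-point. Concretely, the adjunction from \cite[Prop.~3.4]{ToenVaquie07}, recalled in Subsection \ref{ssec:general_framework}, attaches to $\id_{\M_\SC}$ a dg-functor $\Id^\ell:\Perf(R)\lr L_{\textit{perf\,}}(\M_\SC)$ with the universal property that for every $k$-point $x:\Spec(k)\lr\M_\SC$ with underlying pseudo-perfect object $(V,\rho)$, the composition $x^*\circ\Id^\ell$ recovers the pseudo-perfect functor $\varphi_x$ classified by $x$. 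Setting $\SV:=\Id^\ell(R)$, this gives an equivalence $x^*\SV\simeq V$; moreover, since $\Id^\ell$ is a dg-functor, it induces on endomorphism algebras a ring map $R=\End_{\Perf(R)}(R)\lr\End_{L_{\textit{perf\,}}(\M_\SC)}(\SV)$ which specializes along $x$ precisely to the prescribed action $\rho:R\lr\End_k(V)$.

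With this in hand, the computation of $\HO([r])$ is a two-step chase. By Morita invariance \cite[Prop.~2.4.3]{McCarthy_HH_of_category}, $\HH_0(\Perf(R))\cong R/[R,R]$, and the class $[r]$ corresponds to $r$ regarded as an endomorphism of the compact generator $R\in\Perf(R)$. Applying $\HH_0(\Id^\ell)$ therefore sends $[r]$ to the Hochschild $0$-class of the endomorphism $\Id^\ell(r)\in\End(\SV)$ on the universal perfect object.

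It remains to evaluate the HKR map $\HH_0(L_{\textit{perf\,}}(\M_\SC))\lr \mathrm{H}^0\Gamma(\M_\SC,\SO_{\M_\SC})$ at this class. By \cite[Cor.~4.2]{ToenVezzosiHKR} and \cite[Sec.~5.2]{BravDycker21}, this map sends the Hochschild class of an endomorphism $\phi$ of a perfect complex $\SF$ on $\M_\SC$ to the fiberwise categorical trace $x\mapsto\tr_k(x^*\phi)$; one derives this by combining pullback functoriality of $\HH_0$ along $x:\Spec(k)\lr\M_\SC$ with the classical identification $\HH_0(\Perf(k))\cong k$ via the $k$-linear trace. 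Applying this to $\SF=\SV$ and $\phi=\Id^\ell(r)$, and using $x^*\Id^\ell(r)=\rho(r)$ from the first paragraph, yields $\HO([r])(x)=\tr(\rho(r))$, as claimed.

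The main obstacle I foresee is not a deep one but a careful bookkeeping issue in the first paragraph: extracting the identifications $x^*\SV\simeq V$ and $x^*\Id^\ell(r)=\rho(r)$ from the universal property of $\Id^\ell$ requires being precise about the $\SC$-versus-$\SC^{op}$ conventions in the definition of $\M_\SC$ as pseudo-perfect objects and about the variance of the Toen--Vaqui\'e adjunction. Once this is settled, everything else reduces to citing the classical HKR description of $\HO$ on perfect complexes as the fiberwise trace, and no substantive computation beyond that is required.
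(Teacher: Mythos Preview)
Your argument is correct and takes a genuinely different, more streamlined route than the paper's proof. The paper works entirely within the Brav--Dyckerhoff framework: it identifies $\HH_0(\Perf(R))$ with $\Hom_{\Mod(R^e)}(\mathrm{Id}^!,\mathrm{Id})$ via the inverse dualizing bimodule, sets up the pairing $\HH_*(R)\otimes\HH_*(\End_R(V))\to\HH_*(\End_k(V))\cong k$ induced by the action $r\otimes f\mapsto\rho(r)\cdot f$, and then traces through the maps $\Theta$ and $\Psi$ of \cite[Section~3.2]{BravDycker19} together with \cite[Proposition~4.7]{BravDycker21} to identify $\HO([r])(x)$ with the image of $1\in k$ under $k\xrightarrow{\mathrm{id}}\Hom_R(V,V)\xrightarrow{(\Psi\circ\Theta)([r])}\Hom_R(V,V)^\vee\xrightarrow{u^*}k$. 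This is a direct unwinding of the Brav--Dyckerhoff description of $\HO$ at a point. Your approach instead exploits the universal perfect object $\SV=\mathrm{Id}^\ell(R)$ on $\M_\SC$ and reduces everything to naturality of $\HO$ along $x^*$, together with the classical fact that $\HH_0(\Perf(k))\to k$ is the trace. What the paper's approach buys is that it stays entirely inside the machinery already cited for the definition of $\HO$, so no extra compatibility checks are needed; what your approach buys is conceptual clarity and independence from the $\mathrm{Id}^!$ formalism. Your stated caveat about $\SC$ versus $\SC^{op}$ conventions is the right one to flag, and you should also make explicit that the naturality square relating $\HH_0(L_{\textit{perf}}(\M_\SC))\to\Gamma(\M_\SC,\SO_{\M_\SC})$ with $\HH_0(\Perf(k))\xrightarrow{\tr}k$ under pullback along $x$ commutes; this follows from the functoriality of the HKR map in \cite{ToenVezzosiHKR}, but deserves a sentence.
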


\begin{proof}
Let us use the construction and notation from \cite[Section 3.2]{BravDycker19}, with ${\EuScript A}:=\Perf(R)$ and ${\EuScript P}:=\End_R(V)$, seen as a one object category with endomorphism $\End_R(V)$. Consider the dg-functor
$$\Perf(R)^{op}\otimes {\EuScript P}\lr\Perf(k),\quad (a,p)\mapsto\Hom_R(a,p),$$
given by the $R$-module action
\begin{equation}\label{eq:Rmod_action}
R\otimes \End_R(V)\lr \End_k(V),\quad r\otimes f\longmapsto [r f:V\to V, v\mapsto(\rho(r)\cdot f)(v)].
\end{equation}
Applying Hochschild chains to this functor yields 
$$\HH_*(R)\otimes\HH_*(\End_R(V))\lr\HH_*(\End_k(V)),$$
which is equivalent to the map
$$\Theta:\HH_*(\Perf(R)^{op})\lr \RHom(\HH_*({\EuScript P}),\HH_*(\Perf(k)))$$
in \cite[Section 3.2]{BravDycker19} after using the isomorphism $\HH_*(\Perf(R)^{op})\cong\HH_*(\Perf(R))$ and currying. Now use the Morita invariance isomorphism $\HH_*(\End_k(V))\cong k$, equiv.~$\HH_*(\Perf(k))\cong k$, see e.g.~ from \cite[Prop.~9.5.2]{Weibel94} and \cite[Prop.~2.4.3]{McCarthy_HH_of_category}, which is given by the trace. Therefore, starting with the functor (\ref{eq:Rmod_action}), applying chains $\HH$, using the two isomorphisms above, and now restricting also to 0-chains, we obtain the pairing
\begin{equation}\label{eq:Rmod_action_trace}
R\otimes \End_R(V)\lr k,\quad r\otimes f\longmapsto [\tr(\rho(r)\cdot f)].
\end{equation}

\noindent In the notation of \cite[Section 3.2]{BravDycker19}, this corresponds to the map
\begin{equation}\label{eq:ThetaMap}
\Theta:\HH_*(\Perf(R))\lr\Hom_k(\HH_*({\EuScript P}),k)
\end{equation}
appearing in the first diagram of the proof of \cite[Theorem 3.1]{BravDycker19}. If ${\EuScript P}^*$ denotes the right dual of ${\EuScript P}$, there is also a canonical equivalence $\Psi$ of dg-modules: \begin{equation}\label{eq:PsiMap}
\Psi:\Hom_k(\HH_*({\EuScript P}),k)\lr \Hom_{{\EuScript P}^e}({\EuScript P},{\EuScript P}^*),
\end{equation}
as in \cite[Equation (3.1)]{BravDycker19}, since $\HH_*({\EuScript P})={\EuScript P}\otimes_{{\EuScript P^e}}{\EuScript P}$ and $-\otimes_k{\EuScript P}^*$ is right adjoint to $-\otimes_{\EuScript P^e}{\EuScript P}$. Note that the codomain of $\Psi$ is just an $R^e$-linear map from $\End_R(V)$ to $\End_R(V)^*$. By (\ref{eq:Rmod_action_trace}), the image $(\Psi\circ\Theta)([r])$ of a Hochschild 0-chain $[r]\in R/[R,R]$ is given by the unique $R^e$-linear map that sends the identity $\id_V\in\End_R(V)$ to the linear functional
\begin{equation}\label{eq:composition_trace}
\phi_r:\End_R(V)\lr k,\quad \phi(r)(v)=\tr(\rho(r)\cdot v).
\end{equation}

By the first diagram in the proof of \cite[Theorem 3.1]{BravDycker19}, the composition $\Psi\circ\Theta$ computes the natural transformation $\alpha_r:\mbox{Id}^!\lr \mbox{Id}$ adjoint to $r$ restricted to $x$. Here we have used the isomorphism
$$\HH_*(\Perf(R))\cong \Hom_{\Mod(R^e)}(\mbox{Id}^!,\mbox{Id}),$$
from  \cite[Diagram (4.9)]{BravDycker21}, and the composition (4.14) in \cite[Section 4.2]{BravDycker21}, with $\mbox{Id}$ being the identity endofunctor of $\Mod(R)$. By \cite[Proposition 4.7]{BravDycker21}, or \cite[Proposition 5.4]{BravDycker21}, the value of $\HO([r])$ at the point $x\in\M_\SC$ is given by the image of $1\in k$ under the composition
$$k \xrightarrow{\Id} \Hom^0_{R}(M, M) \xrightarrow{(\Psi\circ\Theta)([r])} \Hom^0_{R}(M, M)^\vee \xrightarrow{u^*} k,$$
where $u^*$ is the dual of the unit in $\modk$, cf.~\cite[Corollary 2.6]{BravDycker21}. Since $(\Psi\circ\Theta)([r])$ is computed by (\ref{eq:composition_trace}), we conclude that $\HO([r])(x)=\tr(\rho(r))$.
\end{proof}

\color{black}

\begin{example}\label{ex:localsystem_circle}
A first case to apply Proposition \ref{prop:HO_in_Rmod} is that of sheaves in the circle $S^1$ with no singular support outside of the zero section, i.e.~local systems on $S^1$. From a contact viewpoint, this corresponds to studying the empty Legendrian in $T^*_\infty S^1$. Consider the dg-derived category $\SC=\Loc^c(S^1)$ of perfect local systems on $S^1$. Fix a basepoint $x\in S^1$ and note that the based loop space $\Omega_{x} S^1$ has a set of connected components indexed by the winding number of a loop, i.e.~indexed by an integer, and all components are contractible. Since $\Omega_{x} S^1$ is a group, $\pi_0(\Omega_{x} S^1)$ inherits the structure of a group which is isomorphic to $\pi_0(\Omega_{x} S^1)\cong\pi_1 S^1\cong\Z$; this coincides with the product structure of the Pontryagin product. This isomorphism sends the class of a loop winding positively around $S^1$ once to the generator $1\in\Z$.

In fact, $\SC\cong \Mod(C_{-*}(\Omega_x S^1))^c\cong \Mod(k[\Z])^c\cong \Mod(k[t^{\pm1}])^c$ where the monodromy of a local system is given, under this isomorphism, by the action of $t$ on a perfect module, see e.g.~\cite[Appendix A]{CasalsLi}. Therefore, we are within the framework in Proposition \ref{prop:HO_in_Rmod}. Note that $\M_\SC$ parametrizes the objects of the subcategory of perfect $k[t^{\pm1}]$-modules that are perfect over $k$, i.e.~local systems with stalks given by finite-dimensional bounded complexes of $k$-modules.  By derived Morita invariance, $\HH_*(\SC)\cong \HH_*(k[t^{\pm1}])$ and Proposition \ref{prop:HO_in_Rmod} implies that the map $\HO$ in (\ref{eq:HH_to_regular_functions}) is given by
$$\HH_0(\Mod(k[t^{\pm1}])^c)\cong k[t^{\pm1}]\lr \Gamma(\M_\SC,\SO_{\M_\SC}),\quad t\longmapsto((V,\rho)\mapsto\tr_{\rho}(t)).$$
That is, the regular function $\HO(t)$ on a (proper) local system $\SL=(V,\rho)$ given by the element $t\in\HH_0(\SC)$ on $\M_\SC$ is given by the trace of the monodromy of the local system $\SL$.\hfill$\Box$
\end{example}

\begin{example}\label{ex:localsystem_0dim} A variation on Example \ref{ex:localsystem_circle} is adding a unique point $\La=\{\nu(x)\}\in T^{*}_\infty S^1$ as singular support, where $x\in S^1$ is a fixed basepoint and $\nu(x)$ its positive conormal lift to $T^{*}_\infty S^1$. Consider the dg-derived category $\cC = \Sh_\Lambda^c(S^1)$ of compact sheaves on $S^1$ with microsupport on such a $\Lambda$. As in the proof of Lemma \ref{lem:HH_near_cycle}, a sheaf in $\Sh_\Lambda^c(S^1)$ is determined by a stalk $V$ in $S^1\setminus \{x\}$ together with an endomorphism $\rho: V \to V$. Namely, via \cite[Theorem 1.8]{Nadler17} it corresponds to representations of the path algebra of the one-vertex quiver with a unique (loop) edge. Therefore, $\cC \cong \Mod(k[\bN])^c \cong \Mod(k[t])^c$, where the endomorphism $\rho: V \to V$ is determined by the action of $t$ on a perfect module. Therefore, by derived Morita equivalence and Proposition \ref{prop:HO_in_Rmod}, the $\HO$ map is
$$\HH_0(\Mod(k[t])^c)\cong k[t]\lr \Gamma(\M_\SC,\SO_{\M_\SC}),\quad t\longmapsto((V,\rho)\mapsto\tr_{\rho}(t)).$$
That is, the regular function $\HO(t)$ given by the element $t\in\HH_0(\SC)$ evaluated at a (proper) sheaf $\SF=(V,\rho)\in \M_\SC$ is given by the trace of the parallel transport.\hfill$\Box$
\end{example}


\subsection{Corestriction maps and Hochschild homology}\label{subsec:corestrict-HH}
Let $\cC$ and $\cD$ be two small idempotent-complete dg-categories and $f: \cC \lr \cD$ a dg-functor. Then there is a canonical map between their Hochschild homologies 
$$\HH_*(f): \HH_*(\cC) \lr \HH_*(\cD),$$
and a canonical map between their derived moduli stacks
$$\M(f): \M(\cD) \lr \M(\cC).$$
Let $\Omega,\Omega' \subset \L$ be open subsets in the compact subanalytic Lagrangian $\L$ such that $\Omega' \looparrowright \Omega$. Consider the corestriction functor $\rho_!: \msh_{\Omega'}^c(\Omega') \lr \msh_{\Omega}^c(\Omega)$. We have the following commutative diagram that intertwines the $\HO$-maps of the two categories.

\begin{prop}\label{prop:HH_corestriction_diagram}
Let $\L$ be a compact subanalytic Lagrangian subset with polarization and let $\Omega, \Omega' \subset \L$ be open subsets in the subanalytic Lagrangian such that $\Omega' \looparrowright \Omega$ is an open immersion. Then the corestriction functor induces a commutative diagram
\begin{equation}\label{eq:HH_corestriction_diagram}
\begin{tikzcd}
\HH_*(\msh_{\Omega'}^c(\Omega')) \arrow[r] \arrow[dd, "\HO(\msh_{\Omega'}^c(\Omega'))"'] & \HH_*(\msh_{\Omega}^c(\Omega)) \arrow[dd, "\HO(\msh_{\Omega}^c(\Omega))"] \\
& \\
\Gamma(\M(\msh_{\Omega'}^c(\Omega')), \cO_{\M(\msh_{\Omega'}^c(\Omega'))}) \arrow[r] & \Gamma(\M(\msh_{\Omega}^c(\Omega)), \cO_{\M(\msh_{\Omega}^c(\Omega))}).
\end{tikzcd}
\end{equation}
where the morphism of moduli spaces $\M(\msh_{\Omega}^c(\Omega)) \to \M(\msh_{\Omega'}^c(\Omega'))$ sends an object $\SF \in \M(\msh_{\Omega}^c(\Omega))$ to its restriction $\rho^*\SF \in \M(\msh_{\Omega'}^c(\Omega'))$.
\end{prop}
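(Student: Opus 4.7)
The plan is to deduce this diagram from a general naturality property of the $\HO$ map with respect to arbitrary dg-functors, combined with a Yoneda-style identification of $\M(\rho_!)$ with restriction of sheaves $\rho^*$.

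First, I would establish the following naturality statement for $\HO$: for any dg-functor $f:\SC\lr\SD$ between smooth dg-categories, there is a commutative square
\begin{center}
\begin{tikzcd}
\HH_*(\SC) \arrow[r,"\HH_*(f)"] \arrow[d,"\HO"'] & \HH_*(\SD) \arrow[d,"\HO"] \\
\Gamma(\M_\SC,\SO_{\M_\SC}) \arrow[r,"\M(f)^*"] & \Gamma(\M_\SD,\SO_{\M_\SD}).
\end{tikzcd}
\end{center}
This follows from the construction of $\HO$ in Subsection \ref{ssec:general_framework}: the canonical functor $\Id^\ell_\SC:\SC\lr L_\textit{perf\,}(\M_\SC)$ is the unit of the adjunction $L_\textit{perf\,}\dashv\M$ from \cite[Prop.~3.4]{ToenVaquie07}, so naturality of the unit yields a commutative square $L_\textit{perf\,}(\M(f))\circ\Id^\ell_\SC\simeq\Id^\ell_\SD\circ f$ in $\mbox{Ho(dg-cat)}$. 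Applying the functor $\HH_*$ preserves this commutativity, and the HKR map of \cite[Corollary 4.2]{ToenVezzosiHKR} sending $\HH_*(L_\textit{perf\,}(\M_{(-)}))$ to global regular functions is natural with respect to pullback along morphisms of $D^-$-stacks. Concatenating these two natural squares gives the desired naturality of $\HO$.

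I would then specialize to $f=\rho_!:\msh^c_{\Omega'}(\Omega')\lr\msh^c_\Omega(\Omega)$. The only remaining point is to identify the induced morphism $\M(\rho_!):\M(\msh^c_\Omega(\Omega))\lr\M(\msh^c_{\Omega'}(\Omega'))$ with the restriction of sheaves. By the functor of points description of $\M$, a section over a simplicial commutative $k$-algebra $A$ is a dg-functor $\msh^c_\Omega(\Omega)^{op}\lr\Perf(A)$, represented on $k$-points via Yoneda by a sheaf $\SF\in\msh_\Omega(\Omega)$ through $\Hom(-,\SF)$. Precomposing with $\rho_!^{op}$ and invoking the adjunction $(\rho_!,\rho^*)$ from Proposition \ref{prop:corestriction} gives
$$\Hom(\rho_!(-),\SF)\simeq\Hom(-,\rho^*\SF),$$
which is precisely the pseudo-perfect object represented by $\rho^*\SF$. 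Hence $\M(\rho_!)$ sends $\SF$ to $\rho^*\SF$, exactly as asserted in the proposition.

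The main obstacle is ensuring that the $\HO$ map of Subsection \ref{ssec:general_framework} is functorial in the above sense. This ultimately rests on two standard facts within the framework of \cite{HAGI,HAGII,ToenVaquie07,ToenVezzosiHKR}: the naturality of the unit of the $L_\textit{perf\,}\dashv\M$ adjunction, and the compatibility of the HKR map with pullback along morphisms of $D^-$-stacks. Both can be verified on representables; once in place, the proposition follows immediately by combining the naturality square for $f=\rho_!$ with the adjunction identification of $\M(\rho_!)$ and $\rho^*$.
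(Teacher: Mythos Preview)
Your proposal is correct and follows essentially the same approach as the paper: both rely on functoriality of $\HO$ with respect to dg-functors and the identification of $\M(\rho_!)$ with $\rho^*$ via the adjunction $(\rho_!,\rho^*)$. The paper's proof simply asserts these two facts, whereas you supply the details (naturality of the unit of $L_{\textit{perf\,}}\dashv\M$ for the former, the Yoneda computation for the latter), so your argument is a more fleshed-out version of the same proof.
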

\begin{proof}
The corestriction functor $\rho_!: \msh_{\Omega'}^c(\Omega') \lr \msh_{\Omega}^c(\Omega)$ induces a canonical map on Hochschild homologies
$$\HH_*(\rho_!): \HH_*(\msh_{\Omega'}^c(\Omega')) \lr \HH_*(\msh_{\Omega}^c(\Omega))$$
and a canonical map between their derived moduli stacks
$$\M(\rho_!): \M(\msh_{\Omega}^c(\Omega)) \lr \M(\msh_{\Omega'}^c(\Omega')),$$
where a proper object $\SF \in \M(\msh_{\Omega}^c(\Omega))$ is sent to the restriction $\rho^*\SF \in \M(\msh_{\Omega'}^c(\Omega'))$. Since the $\HO$-map from Hochschild homology to regular functions on the derived moduli stack is functorial, we can obtain the natural commutative diagram.
\end{proof}

\begin{cor}\label{cor:HH_corestriction_K_to_L}
Let $L \subset (X, \lambda_{st})$ be an embedded exact Lagrangian surface with Legendrian boundary $\Lambda$ and $\tt \sse \Lambda$ a set of basepoints. Suppose that $L$ is endowed with an $\bL$-compressing system $\D$ whose Lagrangian disks are $\D^*$ and denote by $\bL:=L\cup\D^*$ the associated Lagrangian skeleton.

\noindent Then for a relative 1-chain $\eta \looparrowright (L, \tt)$ with small neighborhood $\bK\sse\L$, there exists a commutative diagram
\begin{equation}\label{eq:diagram_HO_subskeleton}
\begin{tikzcd}
\HH_*(\msh_{\bK,\tt}^c(\bK,\tt)) \arrow[r,"\HH_*(\rho_!)"] \arrow[dd,"\HO_\bK" left] & \HH_*(\msh_{\bL,\tt}^c(\bL,\tt)) \arrow[dd,"\HO_\bL"] \\
& \\
\Gamma(\M(\msh_{\bK,\tt}^c(\bK,\tt)), \cO_{\M(\msh_{\bK,\tt}^c(\bK,\tt))}) \arrow[r] & \Gamma(\M(\msh_{\bL,\tt}^c(\bL,\tt)), \cO_{\M(\msh_{\bL,\tt}^c(\bL,\tt))})
\end{tikzcd}
\end{equation}
where the vertical maps are $\HO_\bK:=\HO(\msh_{\bK,\tt}^c(\bK,\tt))$ and $\HO_\bL:=\HO(\msh_{\bL,\tt}^c(\bL,\tt))$, and the corestriction functor $\rho_!$ in the horizontal map is the left adjoint to the restriction from $\L$ to the open subskeleton $\bK\times(-\delta,\delta)$ composed with the stabilization isomorphism from $\bK\times(-\delta,\delta)$ to $\bK$.
\end{cor}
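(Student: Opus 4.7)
The plan is to reduce Corollary~\ref{cor:HH_corestriction_K_to_L} to Proposition~\ref{prop:HH_corestriction_diagram} in the pointed setting, via two ingredients: a pointed version of stabilization and a pointed version of corestriction along an open immersion. The commutativity of~(\ref{eq:diagram_HO_subskeleton}) will then follow from the naturality of the $\HO$ map applied to the resulting composite dg-functor.

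First, I will promote Proposition~\ref{prop:stabilization} to the pointed setting. The polarization on $\bL$ determines a collar neighbourhood realizing $\bK\times(-\delta,\delta)$ as an open subset of $\bL$; I would choose this collar transverse to $\dd\bL$ so that the basepoints of $\tt$ contained in $\bK$ remain on the boundary and are not displaced. Applying the unpointed stabilization quasi-equivalence termwise to the three inputs of the homotopy pushout in Definition~\ref{def:cat_decorated_KS}, and noting that the diagonal functor and the microstalk functors at $\tt$ intertwine compatibly under this identification (the microstalk functors are preserved on the nose because the basepoints sit on $\dd\bL$), I obtain a quasi-equivalence
$$\msh^c_{\bK\times(-\delta,\delta),\tt}(\bK\times(-\delta,\delta),\tt)\cong \msh^c_{\bK,\tt}(\bK,\tt).$$

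Next, I will apply Corollary~\ref{cor:corestriction_Lprime_L} to the open immersion $\bK\times(-\delta,\delta)\looparrowright\bL$ with basepoint set $\tt$, producing a compact-preserving corestriction functor
$$\rho_!:\msh^c_{\bK\times(-\delta,\delta),\tt}(\bK\times(-\delta,\delta),\tt)\lr \msh^c_{\bL,\tt}(\bL,\tt).$$
Composing with the pointed stabilization quasi-equivalence above yields the dg-functor $\rho_!:\msh^c_{\bK,\tt}(\bK,\tt)\lr\msh^c_{\bL,\tt}(\bL,\tt)$ appearing in the statement. To conclude, I will invoke the naturality argument of Proposition~\ref{prop:HH_corestriction_diagram}, which is entirely formal: given any dg-functor $f:\cC\lr\cD$ between small idempotent-complete smooth dg-categories, functoriality of $\HH_*$ and of the derived moduli stack functor $\M(\cdot)$ from~(\ref{eq:modulistack_functor}) produce the top and bottom horizontal arrows (the latter by pulling back regular functions along the induced map of moduli stacks), and the $\HO$ map intertwines them by construction. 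The main obstacle is really the pointed stabilization: one must verify that the microstalk data at $\tt$ are preserved on the nose under the identification, which requires the collar to be chosen transverse to $\dd\bL$; once this transversality is arranged, the remainder of the argument is purely functorial and descends through the homotopy pushouts defining the pointed categories.
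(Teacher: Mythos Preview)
Your proposal is correct and matches the paper's approach: the paper states this corollary without proof, leaving it as an immediate consequence of Proposition~\ref{prop:HH_corestriction_diagram} once one composes the pointed corestriction functor of Corollary~\ref{cor:corestriction_Lprime_L} with the stabilization isomorphism of Proposition~\ref{prop:stabilization}. Your observation that the naturality argument is purely formal for any dg-functor between small smooth dg-categories is exactly the content of the proof of Proposition~\ref{prop:HH_corestriction_diagram}, and your care with the pointed stabilization (ensuring the basepoints in $\tt$ are respected) fills in the only detail the paper leaves implicit.
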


\begin{cor}\label{cor:HH_Lagrangian_filling}
Let $L$ be a smooth surface with boundary $\Lambda$ and $\tt \sse \Lambda$ a set of basepoints. Consider an immersed relative 1-chain $\eta \looparrowright (L, \tt)$. Then there exists a commutative diagram
\begin{equation}\label{eq:diagram_HO_localsystems}
\begin{tikzcd}
\HH_*(\Loc^c(\eta, \tt\cap\eta)) \arrow[r] \arrow[dd,"\HO_\eta"] & \HH_*(\Loc^c(L, \tt)) \arrow[dd,"\HO_L"] \\
& \\
\Gamma(\M(\Loc^c(\eta, \tt\cap\eta)), \cO_{\M(\Loc^c(\eta, \tt\cap\eta))}) \arrow[r] & \Gamma(\M(\Loc^c(L, \tt)), \cO_{\M(\Loc^c(L, \tt))}).
\end{tikzcd}
\end{equation}
where $\HO_\eta:=\HO(\Loc^c(\eta, \tt\cap\eta))$ and $\HO_L:=\HO(\Loc^c(L, \tt))$, and $\tt\cap\eta=\dd\eta$ by construction.

In addition, suppose that $L$ is an exact Lagrangian surface with Legendrian boundary endowed with an $\bL$-compressing system and $\eta \looparrowright (L, \tt)$ is a positive relative 1-chain. Then the diagram (\ref{eq:diagram_HO_localsystems}) above is compatible with diagram (\ref{eq:diagram_HO_subskeleton}) in Corollary \ref{cor:HH_corestriction_K_to_L} via the localization functors $\msh^c_{\L,\tt}(\L,\tt) \lr \Loc^c(L,\tt)$ and $\msh_{\bK,\tt}^c(\bK,\tt) \lr \Loc^c(\eta, \tt)$.
\end{cor}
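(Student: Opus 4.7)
The plan is to prove the two assertions of the corollary in sequence, reducing both to (i) an adjunction argument for the corestriction functor on decorated local systems and (ii) the functoriality of the $\HO$ map.

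For the first diagram, I would begin by constructing the horizontal functor $i_!: \Loc^c(\eta, \tt\cap\eta) \lr \Loc^c(L, \tt)$ by imitating the argument in Proposition \ref{prop:corestriction} and Corollary \ref{cor:corestriction_Lprime_L}. Concretely, the immersion $\eta \looparrowright L$ induces a restriction functor $i^*: \Loc^c(L) \lr \Loc^c(\eta)$, and since $\tt\cap\eta = \partial\eta \subset \tt$, this restriction is compatible with the homotopy colimits defining the decorated local system categories $\Loc^c(L,\tt)$ and $\Loc^c(\eta, \tt\cap\eta)$ from Subsection \ref{ssec:Lagrangian_fillings_KS_stack}. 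The restriction functor $i^*$ preserves limits and filtered colimits, hence lives in $\mathrm{Pr}^\mathrm{R}_{\omega,\mathrm{dg},k}$, so it admits a left adjoint $i_!$ that preserves compact objects. With this functor in hand, the commutativity of diagram (\ref{eq:diagram_HO_localsystems}) is a direct application of Proposition \ref{prop:HH_corestriction_diagram}, since $\HH_*$ and $\M(-)$ are functorial and $\HO$ is a natural transformation between them.

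For the compatibility assertion, the key step is to establish the commutativity of the following square of dg-categories:
\begin{equation*}
\begin{tikzcd}
\msh^c_{\bK,\tt}(\bK,\tt) \ar[r,"\rho_!"] \ar[d,"\iota^*_\bK"'] & \msh^c_{\bL,\tt}(\bL,\tt) \ar[d,"\iota^*_L"] \\
\Loc^c(\eta, \tt\cap\eta) \ar[r,"i_!"'] & \Loc^c(L,\tt),
\end{tikzcd}
\end{equation*}
where $\iota^*_\bK$ and $\iota^*_L$ are the localization functors supplied by Corollary \ref{cor:Lagrangian_fillings_KS_stack_pointed}. By uniqueness of left adjoints, it suffices to show that the square of right adjoints commutes, i.e.\ $\rho^* \circ \iota_{L,*} = \iota_{\bK,*} \circ i^*$. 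Geometrically, both compositions send a decorated local system on $L$ first to a microsheaf supported on the zero-section part of $\bL$ (or $\bK$), and the positivity hypothesis on $\eta$ guarantees that $\bK \cap L$ is a neighborhood of $\eta$ onto which both compositions restrict as the pullback along $\eta \looparrowright L$. This commutativity is ultimately an instance of the restriction-by-restriction compatibility built into the Kashiwara--Schapira $\infty$-sheaf $\msh$, combined with the equivalence $\msh_L(L) \cong \Loc(L)$ of Example \ref{ex:ks-stack-local-system}.

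Once the categorical square commutes, applying $\HH_*$ and $\M(-)$ produces two additional commutative squares linking the Hochschild homologies and the moduli stacks of the $\msh$-categories to those of the $\Loc$-categories, and the naturality of $\HO$ (cf.\ the argument for Proposition \ref{prop:HH_corestriction_diagram}) then fills in the remaining faces of the three-dimensional diagram relating (\ref{eq:diagram_HO_subskeleton}) and (\ref{eq:diagram_HO_localsystems}). The main obstacle is precisely the commutativity of the categorical square above, which requires tracking the various adjoints of open-restriction and closed-inclusion type functors and verifying that the localizations $\iota^*_\bK, \iota^*_L$ commute with the corestrictions $\rho_!$ and $i_!$; everything else in the argument is a formal consequence of functoriality and uniqueness of adjoints.
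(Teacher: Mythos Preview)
Your proposal is correct and in fact supplies more detail than the paper itself: the paper gives no proof of this corollary, only the remark that ``compatibility'' means the resulting cube diagram (with eight vertices and the relevant functors between them) commutes. Your strategy---constructing the decorated corestriction $i_!$ as in Corollary~\ref{cor:corestriction_Lprime_L}, invoking the naturality of $\HO$ exactly as in Proposition~\ref{prop:HH_corestriction_diagram}, and then verifying the categorical square by passing to right adjoints and checking $\rho^* \circ \iota_{L,*} \simeq \iota_{\bK,*} \circ i^*$ via restriction-of-supports---is precisely the argument the paper leaves implicit.

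One small comment: your sentence ``the positivity hypothesis on $\eta$ guarantees that $\bK \cap L$ is a neighborhood of $\eta$'' slightly misplaces the role of positivity. Since $\bK$ is by construction a neighborhood of $\eta$ in $\bL$, the intersection $\bK \cap L$ is automatically a neighborhood of $\eta$ in $L$. Positivity is rather what ensures $\bK$ has the specific form $\bK_n \times (-\delta,\delta)$ from Subsection~\ref{ssec:HH0_near_rel_cycle}, so that Corollary~\ref{cor:HH_corestriction_K_to_L} applies and the comparison with diagram~(\ref{eq:diagram_HO_subskeleton}) makes sense. This does not affect the validity of your argument.
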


The compatibility of diagrams (\ref{eq:diagram_HO_subskeleton}) and (\ref{eq:diagram_HO_localsystems}), as stated in Corollary \ref{cor:HH_Lagrangian_filling}, refers to the fact that the corresponding cube diagram, with the corresponding eight terms and functors between them, is commutative.


\subsection{Proof of Theorem \ref{thm:main}}\label{ssec:main_proof} Consider the relative Lagrangian skeleton $\bL:=L\cup\D^*$, where $\D^*$ is the union of Lagrangian disks. By hypothesis, $\eta$ is $\D$-positive and therefore a small enough neighborhood $\Op(\eta)\sse\L$ of $\eta\sse\bL$ is of the form $I^\sigma_{p_1,\ldots,p_n}\times(-\delta,\delta)$, with $I^\sigma_{p_1,\ldots,p_n}$ as in Subsection \ref{ssec:HH0_near_rel_cycle}, $\sigma$ all positive signs, and $\delta\in\R_+$ small enough. The points $p_1,\ldots,p_n\in \eta\sse L$ are the intersection points of $\eta$ with the boundaries of the curves in the $\L$-compressing system $\D$. In other words, $\Op(\eta)\cong \bK_n\times(-\delta,\delta)$ for some $n\in\N$, where $\bK_n$ as in Definition \ref{def:K_skeleton}.

By Proposition \ref{prop:stabilization}, the global sections of the Kashiwara-Schapira stack $\msh$ coincide for the 2-dimensional Lagrangian skeleton $\bK_n\times(-\delta,\delta)$ and the 1-dimensional Lagrangian skeleton $\bK_n$. We therefore choose $\bK_n$ as a model for a closed neighborhood of $\eta$, i.e.~we use $\eta\sse\bK_n\sse\L$ instead of $\eta\sse\Op(\eta)\sse\L$. To ease notation, we write $\bK:=\bK_n$, as the particular value of $n$ has little role in the argument. Since $\La$ is $\tt$-pointed and $\eta$ is a relative 1-cycle in $(L,\tt)$, $\bK$ inherits two natural basepoints, one at each boundary point $\dd\eta$. We denote this pointed skeleton by $(\bK,\tt)$. We construct the Hochschild cycle $H_\eta\in\HH_0(\Sh_{\La,\tt}(\R^2)_0)$ in the statement of Theorem \ref{thm:main} as follows:
\begin{enumerate}
    \item Construct a cycle $h_\eta\in\HH_0(\msh_{\bK,\tt}(\bK,\tt))$, using Corollary \ref{lem:HH_near_cycle_pointed} in Subsection \ref{ssec:HH0_near_rel_cycle}.

    \item Map the cycle $h_\eta\in\HH_0(\msh_{\bK,\tt}(\bK,\tt))$ to $\HH_0(\msh_{\L,\tt}(\L,\tt))$ via the corestriction functor $\rho_!$ induced by the immersion $\bK\looparrowright \L$. This uses Corollary \ref{cor:HH_corestriction_K_to_L} in Subsection \ref{subsec:corestrict-HH}.
\end{enumerate}

For Step (1), the cycle $h_\eta\in\HH_0(\msh_{\bK,\tt}(\bK,\tt))$ is defined as follows. By a pointed version of Example \ref{ex:K_skeleton}, $\msh_{\bK,\tt}(\bK,\tt)$ can be identified with $\Sh^c_{\mathbb{K},\tt}(\eta)\cong\Sh^c_{\mathbb{K},\tt}(D^1)$, where $\bK\sse T^*\eta$ is considered as a relative Lagrangian skeleton and we are identifying $\eta\cong D^1$. Since $\eta\cap\tt=\dd \eta$, 
 Corollary \ref{lem:HH_near_cycle_pointed} implies
$$\HH_0(\Sh^c_{\mathbb{K},\tt}(\eta))\cong k[\rho]\oplus k^{n}.$$
We then choose $h_\eta := \rho \in \HH_0(\Sh^c_{\mathbb{K}_n,\tt}(\bR))$ to be the explicit generator $\rho$ as our local cycle $h_\eta$. By Corollary \ref{cor:HH_corestriction_K_to_L}, the corestriction functor $\rho_!$ gives a map
$$\HH_0(\Sh^c_{\mathbb{K}_n,\tt}(\eta)) \lr \HH_0(\msh_{\L,\tt}(\L,\tt)).$$
We denote by $H_\eta$ the image of $h_\eta$ under this corestriction map. By Theorem \ref{thm:invariance}, the category in the codomain is $\msh_{\L,\tt}(\L,\tt)\cong \Sh_{\La,\tt}(\R^2)_0$ and we obtain a Hochschild cycle $H_\eta \in \HH_0(\Sh_{\La,\tt}(\R^2)_0)$, as required. By Subsection \ref{ssec:general_framework}, this cycle $H_\eta$ defines a global regular function 
$$\HO(H_\eta) \in \mathrm{H}^0(\Gamma(\M(\La,\tt), \cO_{\M(\La,\tt)})).$$

In order to conclude the proof of Theorem \ref{thm:main} we must show that $\HO(H_\eta)$ coincides with the trace of the microlocal merodromy along $\eta$ when restricted to the toric chart $T(L)$ associated to the filling $L$ via Section \ref{ssec:Lagrangian_fillings_KS_stack}. This is done as follows. Let $T(L) \sse \M(\La,\tt)$ be the moduli space of local systems on $L$ determined by the localization
\begin{equation}\label{eq:microlocal_functor_proof}
\Sh_{\La,\tt}^c(\R^2)_0 \xrightarrow{\sim} \msh_{\L,\tt}^c(\L,\tt) \to \Loc^c(L, \tt)
\end{equation}
in Corollary \ref{cor:Lagrangian_fillings_KS_stack_pointed}. Let us show that under the restriction map
$$\Gamma(\M(\La,\tt), \cO_{\M(\La,\tt)}) \lr \Gamma({T}(L), \cO_{{T}(L)}),$$
the function $\HO(H_\eta)$ restricts to the trace of the microlocal merodromy of the local system along $\eta \looparrowright L$. For that, we argue that via the map 
$$\HH_0(\Sh_{\La,\tt}^c(\R^2)_0) \lr \HH_0(\Loc^c(L, \tt))$$
induced by (\ref{eq:microlocal_functor_proof}) above, the class $H_\eta$ restricts to the relative 1-chain $\eta \in \HH_0(\Loc(L, \tt))$. Indeed, we have a commutative diagram
\begin{equation}\label{eq:diagram_HH_from_eta_to_L}
\begin{tikzcd}
\HH_0(\Sh_{\bK_n,\tt}^c(\eta)) \arrow[d] \arrow[r] & \HH_0(\Loc^c(\eta, \tt\cap\eta)) \arrow[d] \\
\HH_0(\Sh_{\La,\tt}^c(\R^2)_0) \arrow[r] & \HH_0(\Loc^c(L, \tt))
\end{tikzcd}
\end{equation}
where the vertical morphisms are induced by the corestriction functors from Corollary \ref{cor:corestriction_Lprime_L} and the horizontal morphisms are induced by the localization functors from Corollary \ref{cor:Lagrangian_fillings_KS_stack_pointed}. By identifying $\HH_0(\Loc^c(\eta, \tt\cap\eta)) \cong \HH_0(\Loc^c(S^1)) \cong k[\rho^{\pm 1}]$, the top horizontal morphism in diagram (\ref{eq:diagram_HH_from_eta_to_L}) reads
$$\HH_0(\Sh_{\bK_n,\tt}^c(\eta)) \xrightarrow{\sim} k[\rho] \oplus k^n \to k[\rho^{\pm 1}] \xrightarrow{\sim} \HH_0(\Loc^c(\eta, \tt\cap\eta)),$$
where the middle morphism $k[\rho] \oplus k^n \to k[\rho^{\pm 1}]$ is projection onto the first factor composed with the natural inclusion $k[\rho]\to k[\rho^{\pm 1}]$. Consequently, the top horizontal morphism sends $\rho$ to $\rho$, where the first $\rho$ is understood via the identification in Corollary \ref{lem:HH_near_cycle_pointed}. Therefore, the restriction of $H_\eta$ is the image of $\rho \in \HH_0(\Loc^c(\eta, \tt\cap\eta))$ and hence it is identified with the relative 1-chain $\eta \in \HH_0(\Loc(L, \tt))$. By Proposition \ref{prop:HO_in_Rmod}, the regular function associated to $\eta \in \HH_0(\Loc(L, \tt))$ computes the trace of the microlocal merodromy along $\eta$ in $\Loc(L, \tt)$. This completes the proof.\hfill$\Box$


\subsection{A comment on generality}\label{ssec:generalization} We have stated and proven \cref{thm:main} and \cref{cor:main} for Legendrian links $\La\sse(T^*_\infty\R^2,\xi_\st)$. This specific framework is the most studied in the literature, see e.g.~\cite{CasalsHonghao,CasalsGao24,CGGLSS,CW,CasalsZaslow,EHK}. By the Darboux theorem, this case needs to be understood if one aims at studying Lagrangian fillings of an arbitrary Legendrian link $\La\sse(\dd W,\ker\la)$ in the contact boundary of a Weinstein 4-manifold $(W,\la)$. Following a suggestion of the referee, we briefly sketch a generalization of \cref{thm:main}, as follows.

The proof of \cref{thm:main}, as written, works almost identically if one considers embedded exact Lagrangian fillings $L\sse (W,\la_\st)$ of a Legendrian link $\La\sse(\dd W,\ker\la_\st)$. The main modifications are:

\begin{enumerate}
    \item The definition of an $\L$-compressing system $\D$ for $L$ generalizes as follows: an $\L$-compressing system $\D$ for $L\sse(W,\la_\st)$ is still a collection of Lagrangian disks in $W$ with boundary on $L$, properly embedded on its complement $W\setminus L$, with the condition that the arboreal Lagrangian given by the union $\bL:=L\cup_{\dd\D}\D$ must now be a relative Lagrangian skeleton for the Weinstein pair given by the Weinstein 4-manifold $(W,\la_\st)$ and the Weinstein ribbon of $\La$. See e.g.~\cite[Section 2]{Eliashberg18_WeinsteinRevisited} for details on Weinstein pairs.\\
    
    \noindent Note that the Kashiwara-Schapira stack $\msh_{\bL}$ from \cref{sec:corestriction} is still defined, cf.~ e.g.~\cite[Section 3.4]{Nadler16}, and the results on $\msh_{\bL}$ and corestriction functors remain the same. Similarly, the pointed versions $\msh_{\bL,\tt}(\bL)$ and $\M(\bL,\tt)$ of the category $\msh_{\bL}(\bL)$ and its moduli stack $\M(\bL)$ are defined via the same homotopy colimits and limits.\\

    \item The condition of the relative 1-cycle $\eta$ being $\D$-positive can be generalized to the condition that a small enough neighborhood of $\eta\sse L$ in the Lagrangian skeleton $\bL=L\cup_{\dd \D}\D$ retracts to the model in \cref{ssec:HH0_near_rel_cycle}, i.e.~to a horizonal line (or circle) with a sequence of half-rays attached to it {\it all pointing in the upwards direction}, as in \cref{fig:Filling_Merodromy3} (left). We still refer to such a relative 1-cycle $\eta$ as a $\D$-positive cycle.
\end{enumerate} 

\noindent The same ingredients as in \cref{ssec:ingredients}, with the modified concepts above, leads to a generalized version of \cref{thm:main}. The statement reads as follows:

\begin{thm}[Generalized extension result]\label{thm:main_generalized}
Let $\La\sse(\dd W,\ker \la_\st)$ be a $(\tt,T)$-pointed Legendrian link in the contact boundary of a Weinstein 4-manifold $(W,\la_\st)$ and $L\sse (W,\la_\st)$ an embedded exact Lagrangian filling of $\La$ endowed with an $\L$-compressing system $\D$.

Then, for any $\D$-positive relative 1-cycle $\eta$ in the relative pair $(L,T)$, there exists a Hochschild 0-cycle $H_\eta\in \HH_0(\msh_{\bL,\tt}(\bL))$ whose associated regular function
$\HO(H_\eta)\in \mathrm{H}^0\Gamma(\M({\bL,\tt}),\SO_{\M({\bL,\tt})})$
coincides with the trace of the microlocal merodromy along $\eta$ when restricted to ${T}(L)\sse \M({\bL,\tt})$.\qed
\end{thm}

\noindent Note that the symplectic invariance and properties of the category $\msh_{\bL}(\bL)$ for an arbitrary Lagrangian skeleton $\bL$ are still being explored, see e.g.~\cite{NadlerShende}. For the case of $(W,\la)=(T^*S,\la_\st)$ of the standard cotangent bundle of a smooth surface $S$, where $S$ is a smooth Lagrangian skeleton, the results from \cite{GKS_Quantization} apply, see also \cite[Section 10]{Guillermou23_SheafSummary}, and the resulting categories are classically known to be invariant under contact isotopies and compactly supported Weinstein homotopies.


\appendix
\section{Some background on categories and sheaves}\label{sec:appendix}

We hope that this short appendix, through Subsections \ref{ssec:infty_cat_microlocal_sheaves} and \ref{ssec:dgcat_contact_and_symp}, might be of help to some readers when navigating parts of the algebraic frameworks related to this article. In this appendix, we follow the framework of $\infty$-categories as developed in \cite{HTT}. Though not needed for our article, we recommend \cite{RiehlVerity19_infty_categories_scratch} and references therein for further discussions on different models for $\infty$-categories, see e.g.~quasi-categories (weak Kan complexes), simplicial categories and Segal spaces (and complete Segal categories).

\subsection{$\infty$-categories and the microlocal theory of sheaves}\label{ssec:infty_cat_microlocal_sheaves} Let $X$ be a locally compact Hausdorff space and $\LC$ a compactly-generated stable $\infty$-category, which serves as the coefficient category for $\infty$-sheaves on $X$. The $\infty$-topos $\Sh(X;\LC)$ of $\LC$-valued $\infty$-sheaves on $X$ is discussed in detail in \cite[Section 6.2.2]{HTT}. This higher-categorical framework is well-adapted for merging sheaves and homotopy theory.

If $X$ is a real smooth manifold the notion of singular support of a sheaf, a certain conical subset of $T^*X$, can be introduced: the study of sheaves and their singular support is known as the microlocal theory of sheaves or microlocal sheaf theory.\footnote{In microlocal sheaf theory, {\it microlocal} is an adjective for the noun {\it theory}.} For the microlocal theory of sheaves on (real analytic) smooth manifolds, the standard reference is \cite{KashiwaraSchapira_Book}.

\begin{remark}
Since its introduction, the study of singular support has been extended to more settings, including \'etale constructible sheaves on algebraic varieties over
an arbitrary field $k$ \cite{Beilinson_SSetale}, $\ell$-adic sheaves \cite{Barret23_SSladic}, and there is also a related notion of singular support for coherent sheaves, cf.~ \cite{ArinkinGaitsgory15_SSCoh}.\hfill$\Box$
\end{remark}

Technically, the results in \cite{KashiwaraSchapira_Book} are stated and proven in the setting of bounded derived categories, not in the more modern context of stable $\infty$-categories. (There are limitations to using derived categories, see e.g.~\cite[Section 2.2]{Toen08_LecturesDG}.) It is nevertheless possible to upgrade the results we need from \cite{KashiwaraSchapira_Book} to this setting by using \cite{RobaloSchapira18_ExtensionLemmaInfinity,Volpe}. Specifically, in \cite{Volpe}, the 6-functor formalism\footnote{Including the existence of the derived functors and the validity of various formulae between them.} for sheaves on locally compact
Hausdorff topological spaces is extended to sheaves with values in any closed symmetric
monoidal $\infty$-category which is stable and bicomplete, which includes the dg-nerve of any pretriangulated dg-category. In \cite[Theorem 4.1]{RobaloSchapira18_ExtensionLemmaInfinity}, the non-characteristic deformation lemma is extended to $\infty$-sheaves with values in any compactly-generated stable $\infty$-category. In particular, this implies that the equivalences between the definitions of singular support in \cite[Prop.~5.1.1]{KashiwaraSchapira_Book} still hold in the setting of pretriangulated dg or stable $\infty$-categories.

\begin{remark}\label{rmk:coefficient_category}
Even if many aspects of the microlocal theory of sheaves extend to $\infty$-sheaves, we require that the coefficient category (for the $\infty$-sheaves) is compactly generated, instead of an arbitrary stable $\infty$-category, if we use the definition of singular support in \cite{KashiwaraSchapira_Book}. The reason is that the non-characteristic deformation lemma may fail for sheaves on manifolds defined over arbitrary presentable stable $\infty$-categories (even dualizable ones), cf.~ \cite[Rem.~4.24]{EfimovKtheory}.
\hfill$\Box$
\end{remark}

In summary, the arguments and results from \cite{KashiwaraSchapira_Book}, along with \cite{RobaloSchapira18_ExtensionLemmaInfinity,Volpe}, combined with \cite{HTT}, yield a rigorous foundational framework for the microlocal theory of $\infty$-sheaves on real analytic manifolds with values in compactly-generated stable $\infty$-categories.

\subsection{Dg-categories and sheaves in contact and symplectic topology}\label{ssec:dgcat_contact_and_symp}

In part due to historical reasons, the scientific development of contact and symplectic topology with regards to the study of Legendrian submanifolds has been more inclined towards the framework of dg-categories, and that of $A_\infty$-categories, a close relative of dg-categories. (In a sense, $A_\infty$-categories form minimal models for dg-categories.) To wit, the Legendrian contact dg-algebra \cite{Chekanov,EkholmEtnyreSullivan05b} and various Fukaya categories, cf.~ \cite[Chapter 2]{Seidel08} and \cite[Section 3]{GPS1}, naturally inherit their dg- and $A_\infty$-structures via the geometric counts extracted from certain moduli spaces of pseudo-holomorphic disks. The $\Aug_+$ and $\Aug_-$ $A_\infty$-categories, respectively studied in \cite{NRSSZ} and \cite{BourgeoisChantraine14_AugCategory}, are also instances of this phenomenon. See \cite{CanonacoOrnaghiStellari,canonaco2024localizationscategoriesainftycategories,Pascaleff,Tanaka} for more details on the relation between dg-categories and $A_\infty$-categories. If we then study symplectic topology via the microlocal theory of $\infty$-sheaves, with coefficients in an $\infty$-category, a first dissonance appears at the algebraic level: Floer theoretic constructions produce dg-structures or $A_\infty$-structures, rather than $\infty$-structures. There are ways to address this:

\begin{enumerate}
    \item Relate dg-categories (and $A_\infty$-categories) and $\infty$-categories. These relations are reasonably well-established in the literature, and subtleties often reside in the homotopy theory of the functors between them. (E.g.~on the right choices of model structures and localizations to be performed when comparing their categories of categories.) Relevant references here are \cite{Cohn_DG_to_stable} and \cite[Section 1.3.1]{HA}, especially \cite[Construction 1.3.1.6 and Prop. 1.3.1.10]{HA}. An important construction is the dg-nerve, which inputs a dg-category and outputs a simplicial set. This simplicial set is to be understood as a quasi-category, one of the models for a $\infty$-category. The dg-nerve of a pretriangulated dg-category is stable.\\

    \item Alternatively, consider categories of sheaves with given singular support within the dg-setting, i.e.~{\it not} as $\infty$-sheaves with values in an $\infty$-categories but rather as sheaves with values in a dg-category. (Often the dg-derived setting is chosen.) For instance, this is the approach taken in \cite{Guillermou23_SheafSummary}. Some relevant references for dg-categories are \cite{Drinfeld_DG,DGDerived3,Keller_ICM06,Tabuada05_ModelStructure,Tabuada_Wellgenerated,DGDerived1}. In this framework, we use that the $\infty$-category of dg-categories admits (homotopy) limits and colimits. In particular, this is used to specify the descent condition for a presheaf (of dg-categories) to be a sheaf. Similarly, being complete and cocomplete implies that the sheafification construction in \cite[Section 6.5.3]{HTT} works for presheaves of dg-categories.
\end{enumerate}

Each framework has its own features. An advantage of $(1)$ is that one can consider $\infty$-categories as a unifying framework: one can construct an $\infty$-category from a dg-category (or an $A_\infty$-category), e.g.~via the dg-nerve. There are a number of well-defined operations in the $\infty$-category of $\infty$-categories (and therefore in the $\infty$-topos $\Sh(X;\LC)$ of $\infty$-sheaves) which are more subtle in the dg-setting. In addition, having the freedom of choosing a more general $\infty$-category as coefficients (such as spectra, see ~\cite[Chapter 1]{HA}), rather than dg-categories as coefficients, can be fruitfully exploited, cf. \cite{JinTreumann}. An advantage of (2) is that explicit computations are typically more accessible: e.g.~the Legendrian contact dg-algebra of a Legendrian link can be presented with finitely many generators and relations (of polynomial type), which a computer can readily produce and manipulate from a (plat) front. This allows for more accessible approaches when extracting contact and symplectic invariants, such as computing the $\Aug_\pm$ categories or explicitly giving compact generators of wrapped Fukaya categories.

In our particular case, the chief reason to consider dg-categories, instead of $\infty$-categories, is \cite{ToenVaquie07}. Namely, \cite[Theorem 0.2]{ToenVaquie07} establishes a number of desirable properties for the derived stack $\M_\SC$ of pseudo-perfect objects of a dg-category $\SC$ of finite type. These include $\M_\SC$ being locally geometric, locally of finite presentation and, for quasi-representable objects, an identification of the tangent complex $T_x \M_\SC\simeq\End(x)[1]$ at an object $x\in\mbox{Ob}(\SC)$ with its (shifted) endomorphism dg-algebra. The dg-derived categories in Section \ref{sec:sheaf_cat_moduli}, e.g.~$\SC=\Sh_\La^c(\R^2)$, are of finite type and thus \cite{ToenVaquie07} applies.

\begin{remark}
It might be possible to establish results for derived stacks as in \cite{ToenVaquie07} for certain compactly-generated presentable $k$-linear stable $\infty$-categories, instead of finite-type dg-categories. See e.g.~\cite{BZFN,Lurie_SAG,GaitsgoryRozenblyum1,GaitsgoryRozenblyum2} for the formalism of derived stacks using stable $\infty$-categories and \cite{Pandit_moduli}, \cite[Section 4]{Lowrey18_moduli_stack_DbCoh} and \cite[Section 7]{PortaTeyssier} for some construction and results of derived moduli stacks (partially) in the setting of $\infty$-categories.\hfill$\Box$
\end{remark}

As said above, we can develop our results within option (2): considering dg-derived categories of sheaves, so that we can use \cite{ToenVaquie07}, and not use the $\infty$-categorical setting. To finalize, we emphasize that \cite{Cohn_DG_to_stable} can also be used to prove our results via option (1): translate all dg-categorical constructions into $\infty$-categories and then either justify that the results can still be understood within the context of dg-categories or directly rectify the resulting $\infty$-categorical constructions back to the dg-setting. Observe that the dg-categories appearing in the contact and symplectic topology that are currently being studied are compactly-generated dg-categories.

This rectification is summarized as follows. Namely, \cite[Cor.~5.7]{Cohn_DG_to_stable} shows that the underlying $\infty$-category $\mbox{dg-cat}_k$ of the Morita model category structure
on the category of small dg-categories is equivalent to the $\infty$-category of compactly-generated presentable $k$-linear
stable $\infty$-categories $\operatorname{Pr}^\text{L}_{\omega,st,k}$ (with functors that preserve colimits and compact objects) or equivalently $(\operatorname{Pr}^\text{R}_{\omega,st,k})^{op}$  (with functors that preserve colimits and limits). At core, the results of \cite{Cohn_DG_to_stable} are rectification results, in particular showing that given a compactly generated $k$-linear
stable $\infty$-category, there exists a pretriangulated dg-category corresponding to it, in the appropriate homotopical sense. (See also the rectification result \cite[Theorem 1.1]{Haugseng15_Rectification}, which implies that $\infty$-categories enriched over
chain complexes are equivalent to dg-categories.)

\bibliographystyle{alpha}
\bibliography{main}

\end{document}